\numberwithin{equation}{section}
\newtheorem{theorem}{Theorem}[section]
\newtheorem{lemma}[theorem]{Lemma}
\newtheorem{prop}[theorem]{Proposition}
\newtheorem{rmk}[theorem]{Remark}
\newtheorem{defi}[theorem]{Definition}
\theoremstyle{definition}
\renewcommand{\tilde}{\widetilde}          
\DeclareMathSymbol{\leqslant}{\mathalpha}{AMSa}{"36} 
\DeclareMathSymbol{\geqslant}{\mathalpha}{AMSa}{"3E} 
\DeclareMathSymbol{\eset}{\mathalpha}{AMSb}{"3F}     
\renewcommand{\leq}{\;\leqslant\;}                   
\renewcommand{\geq}{\;\geqslant\;}                   
\def\bi{\begin{itemize}}
\def\ei{\end{itemize}}
\def\bnum{\begin{enumerate}}
\def\enum{\end{enumerate}}
\def\<#1{\langle #1 \rangle}
\newcommand{\norm}[1]{\left\lvert#1\right\rvert}
\newcommand{\expect}[1]{\mathbb{E}\left[#1\right]}
\title{Liouville Conformal Field Theory \\on even-dimensional spheres}
\author{Baptiste Cercl\'e\thanks{baptiste.cercle@universite-paris-saclay.fr}}
\affil{Laboratoire de Math\'ematiques d'Orsay, Universit\'e Paris-Saclay.}
\date{}
\begin{document}

\maketitle
\abstract{Initiated by Polyakov in his 1981 seminal work, the study of two-dimensional Liouville Conformal Field Theory has drawn considerable attention over the past decades. Recent progress in the understanding of conformal geometry in dimension higher than two have naturally led to a generalization of Polyakov formalism to higher dimensions, based on conformally invariant operators: Graham-Jenne-Mason-Sparling operators and the $\mathcal{Q}$-curvature. 

This document is dedicated to providing a rigorous construction of Liouville Conformal Field Theory on even-dimensional spheres. This is done at the classical level in terms of a generalized \textit{Uniformization} problem, and at the quantum level thanks to a probabilistic construction based on log-correlated fields and Gaussian Multiplicative Chaos.
The properties of the objects thus defined are in agreement with the ones expected in the physics literature.}


\section{Introduction}
\subsection{Liouville Conformal Field Theory in higher dimension}

Providing a proper meaning to Liouville Conformal Field Theory (LCFT in the sequel) has been a fundamental issue since its introduction by Polyakov in his 1981 groundbreaking work~\cite{Pol81}. In this article, Polyakov describes a theory of summation over Riemannian metrics on a two-dimensional surface with fixed topology: formally speaking, this approach introduces a \textit{canonical way} of picking at random a geometry on a surface with fixed topology. To do so, a generalized path integral approach involving the the Liouville functional is being used, an approach which allows the introduction of a canonical random measure on such metrics, usually referred to as \textit{Liouville Quantum Gravity}. The problem of giving a rigorous meaning to Polyakov formalism has been an ongoing challenge for mathematicians over the past few decades and was successfully addressed thanks to the introduction of a probabilistic framework in a series of work initiated by David, Kupiainen, Rhodes and Vargas in \cite{DKRV16}. The culminating point of this programme aimed at providing a rigorous definition to Polyakov formalism is the proof of the DOZZ formula in \cite{KRV_DOZZ}, which is an explicit expression for the structure constants of LCFT, and that matches matches the one predicted in the physics literature~\cite{DO94,ZZ96} (and also derived by Teschner in~\cite{Tes95} and~\cite{Tes01, Tes04}).

On the other hand, the study of conformal geometry in dimension higher than two has considerably developed recently, with the introduction of higher-dimensional analogues of the Laplace operator and the Gauss curvature: the Graham-Jenne-Mason-Sparling (GJMS in what follows) operators and the $\mathcal{Q}$-curvature \cite{GJMS}. As we will see later, these operators play a role which is similar to the one of their two-dimensional analogues in the context of LCFT: it is therefore natural to expect that one can define LCFT in higher dimension by using the same framework as the one introduced in the two-dimensional setting. More generally, the topic of higher-dimensional Conformal Field Theory (CFT in the rest of the document) has regained attention lately, with for instance the AdS/CFT correspondence (introduced in the seminal work \cite{M99} by Maldacena) that establishes a duality between instances of string theories on Anti-de Sitter spaces and CFTs on their boundary, the most famous example establishing a correspondence between type IIB string theory and $\mathcal N=4$ supersymmetric Yang-Mills theory. 

The study of LCFT in a higher-dimensional context shares many interesting features with its two-dimensional analogue, being a non-perturbative theory which comes with an explicit formulation in terms of path integral. There is also good hope that, in full analogy with the two-dimensional case, the theory may be integrable in the sense that the three-point function of the theory may be explicitly computed (see the main result in \cite{FP18} as well as \cite[IV.D]{LO18}). The study of LCFT in a higher-dimensional context also enjoys a rich interplay with problems that naturally arise in the study of conformal geometry. Indeed, the classical aspect of LCFT corresponds to answering a problem of \textit{Uniformization}: does every compact even-dimensional manifold $(\mathcal M,g)$ carry a conformal metric with constant (negative) $\mathcal{Q}$-curvature? More generally, one could ask for a result similar to the one of Troyanov~\cite{T91}: is it possible to find a conformal metric with conical singularities and with prescribed $\mathcal{Q}$-curvature? The variational formulation of these two questions consists of finding critical points of a higher-dimensional analogue of the Liouville action functional~\eqref{Liouville_action}, which is the starting point of the higher-dimensional LCFT.

\subsection{The formalism of Liouville Conformal Field Theory}
As explained above, LCFT can be understood as a probabilistic framework that provides a natural way of picking at random a conformal structure on a $d$-dimensional compact manifold $(\mathcal{M},g)$. In other words, the theory should describe \textit{fluctuations} of the conformal geometry of a Riemannian manifold around its most natural structure, which corresponds to the solution of the Uniformization problem (\textit{i.e.} the metric with constant negative $\mathcal{Q}$-curvature $-\Lambda$). In order to study the difference between a given metric and this \lq\lq optimal metric" it is natural to consider the variational formulation of the problem of constant negative $\mathcal{Q}$-curvature: the solution of the Uniformization problem corresponds to the minimiser of the Liouville action functional, which takes in any even dimension the form~\cite{LO18}
\begin{equation}\label{Liouville_action}
    S_L(X,g)=\frac{d}{2(d-1)!\norm{\mathcal{M}}}\int_{\mathcal{M}}\left(X\mathcal{P}_gX+2\mathcal{Q}_gX+\frac{2}{d}\Lambda e^{dX}\right)\mathrm{d}\lambda_g
\end{equation}
for maps $X:\mathcal{M}\to\mathbb{R}$ and with $\mathrm{d}\lambda_g$ the volume form in the metric $g$. Therefore we can consider that a given conformal metric $e^{2X}g$ is close to \lq\lq the most canonical metric" when the value of the above functional is close to its minimal value. In the latter expression, we have introduced the geometric operators $\mathcal{P}_g$ and $\mathcal{Q}_g$ ---which are respectively the GJMS operator (a differential operator of order $d$) and the $\mathcal{Q}$-curvature (a scalar quantity)--- and that correspond to higher-dimensional generalizations of the Laplace-Beltrami operator and the Gauss curvature in the realm of conformal geometry.

In the quantum theory, we wish to provide a way to answering the following question: what does a canonical conformal structure on the manifold $(\mathcal{M},g)$ look like? This randomisation is done by introducing a random field $\phi_g$, the \textit{Liouville field}, whose law is described by formally setting, for $F$ bounded over the set of real-valued functions over $\mathcal{M}$,
\begin{equation}\label{Liouville_field}
    \expect{F(\phi_g)}=\frac{1}{\mathcal{Z}_g}\int F(X)e^{-S_L(X,g)}D_gX
\end{equation}
where $D_g$ stands for a \lq\lq Lebesgue measure" over maps $\mathcal{M}\to\mathbb{R}$ and $\mathcal{Z}_g$ is a renormalization factor referred to as the partition function. Picking randomly a \lq\lq canonical" Riemannian metric is therefore tantamount to considering the random metric $e^{2\phi_g}g$.
The form of Equation~\eqref{Liouville_field} shows us that the quantum field has a tendency to remain close to the classical field of the theory. As we will see below, before actually considering the above expression we will consider the quantization of the action, which corresponds to introducing quantum parameters in the Liouville functional.

One issue, at least at a mathematical level, is that the latter expression does not really make sense. Indeed, a first obstruction is to interpret the \lq\lq uniform measure" on fields $D_gX$; once this is done, one must provide a meaning to the Liouville action since the field $X$ is expected to be highly non-regular. The introduction of a probabilistic framework allows one to overcome these problems thanks to two objects that have become fundamental over the last decade: log-correlated fields and Gaussian Multiplicative Chaos (GMC in the sequel). The geometric flavour of the Liouville functional motivates the introduction of a log-correlated field to play the role of the map $X$ which appears in the Liouville action. This field generalizes the two-dimensional Gaussian Free Field, an analog of the Brownian motion for which the time variable now lives in a $d$-dimensional space, and which arises in many different contexts (statistical physics, theory of random surfaces, quantum field theory). As for the GMC, it can be understood as a random measure on Borel sets which can formally be written under the form $e^{\gamma X(z)}\mathrm{d}\lambda_g$ where $X$ is a log-correlated field and $\gamma$ is a positive real number. This writing is purely formal since the field $X$ is highly non-regular: an approximation procedure is necessary to give a proper meaning to this object. 

In this document we provide a rigorous definition of LCFT in a higher-dimensional context in the case where the manifold being considered is the even-dimensional sphere $\mathbb{S}^d$ (which naturally extends to a certain class of Riemannian manifolds). This is done at the classical level by deriving an existence and uniqueness statement for conformal metrics with constant negative $\mathcal{Q}$-curvature and prescribed conical singularities (Theorem~\ref{classical_conical}), and at the quantum level by providing a rigorous meaning to the path integral~\eqref{Liouville_field} in terms of the two probabilistic objects introduced above, log-correlated fields and GMC (see Theorem~\ref{partition_limit}). The construction thus obtained is consistent with the predictions of the physics literature~\cite{LO18} and with expected properties of a CFT~\cite{G96, R16} as stated in Proposition~\ref{conformal_covariance}.

\paragraph{Acknowledgements}I am very grateful to Antti Kupiainen for having supervised the internship during which this work has been undergone, as well as to the University of Helsinki for the support and hospitality provided while this problem was being investigated. I am also thankful to Vincent Vargas for having suggested this problem and for many fruitful discussions.

\section{Liouville Conformal Field Fheory in higher dimension: the classical theory}
The purpose of this section is to present the classical problem that lies behind LCFT in higher dimension. Namely, we first briefly review the notion of conformal symmetry in a space of dimension strictly higher than two. As we will see, the distinction between local and global transformations is no longer meaningful in contrast to the two-dimensional setting, where besides the $6$-parameter \textit{global conformal group} a special class of local conformal mappings exist: holomorphic mappings. We then present the conformally invariant operators which will be key in the formulation of LCFT: the GJMS operators and the $\mathcal{Q}$-curvature. With all these tools at hand a proper definition of the classical theory of LCFT in higher dimension, namely the study of a generalized Uniformization problem, is given.

\subsection{Conformal transformations in higher dimension}
In conformal field theory, some transformations play a central role: they are the so-called \textit{conformal maps}. Heuristically, a conformal map is a reparametrisation of the space that preserves angles. More rigorously, the notion of global conformal map can be defined on a Riemannian manifold $(\mathcal M,g)$ to be a diffeomorphism $\psi:\mathcal M\rightarrow \mathcal M$ such that the pulled-back metric $\psi^*g$ is conformally equivalent to $g$. In other words, there exists a smooth function $\varphi:\mathcal M\rightarrow \mathcal M$ such that $\psi^*g=e^{2\varphi}g$. A local conformal map is defined on a similar way by requiring this last proposition to hold only locally, that is on some open subset of $\mathcal M$.

Assume that $(\mathcal{M},g)$ is conformally equivalent to an Euclidean space. Then a theorem by Liouville asserts that when the dimension of $\mathcal{M}$ is strictly greater than $2$ a local conformal map must necessarily be an element of the $\frac{(d+1)(d+2)}2$-dimensional \textit{global conformal group}, made of \textit{M\"obius transforms}. This contrasts with the two-dimensional case where holomorphic mappings provide a large variety of local conformal maps.

Namely, let us denote by $\mathrm{d}\lambda$ the standard Lebesgue measure on $\mathbb{R}^d$ equipped with its flat metric $\norm{\mathrm d^2x}$. Then, on the Euclidean space $(\mathbb{R}^d\cup\lbrace\infty\rbrace,\norm{\mathrm d^2x})$, the M\"obius group is generated by four types of transformations: 
\begin{itemize}
    \item Translations $x\mapsto x+y$ for some $y\in\mathbb{R}^d$;
    \item Dilations $x\mapsto \rho x$ for some non-zero real $\rho$;
   \item Rotations $x\mapsto \Omega x$ where $\Omega$ is an element of the special orthogonal group $SO_n(\mathbb{R})$;
   \item Inversions $x\mapsto-\frac{\overline{x}}{\norm{x}^2}$ where $\overline{x}=(x_1,-x_2,...,-x_n)$.
\end{itemize}

For such maps $\psi:\mathbb{R}^d\cup\lbrace\infty\rbrace\rightarrow\mathbb{R}^d\cup\lbrace\infty\rbrace$, the flat-metric tensor  transforms as 
\[
\norm{\mathrm d^2x}\rightarrow \norm{\psi'(x)}\norm{\mathrm d^2x}
\]
where $\norm{\psi'(x)}:=\norm{\text{Jac}(\psi)(x)}^{\frac 2d}$ is called the conformal factor. Here Jac$(\psi)$ denotes the determinant of the Jacobian matrix of $\psi$. It may be easily seen that these M\"obius transforms satisfy a scaling property as follows:
\begin{lemma}
Let $\psi:\mathbb{R}^d\cup\lbrace\infty\rbrace\mapsto\mathbb{R}^d\cup\lbrace\infty\rbrace$ be a M\"obius transform, and $x,y$ be any two points in $\mathbb{R}^d$ (not mapped to $\infty$). Then
\begin{equation}
    \norm{\psi(x)-\psi(y)}=\norm{\psi'(x)}^{\frac 12}\norm{\psi'(y)}^{\frac12}\norm{x-y}.
\end{equation}
\end{lemma}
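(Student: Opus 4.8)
The plan is to exploit the group structure that has just been recalled: every Möbius transform of $\mathbb{R}^d\cup\{\infty\}$ is a finite composition of the four elementary types (translations, dilations, rotations, inversions). So I would first check that the asserted identity is stable under composition, and then verify it for each of the four generators. For the composition step, suppose $\psi=\psi_1\circ\psi_2$ with each $\psi_i$ satisfying the identity. The chain rule gives $\mathrm{Jac}(\psi)(x)=\mathrm{Jac}(\psi_1)(\psi_2(x))\,\mathrm{Jac}(\psi_2)(x)$, so the conformal factor is multiplicative, $|\psi'(x)|=|\psi_1'(\psi_2(x))|\,|\psi_2'(x)|$; applying the identity for $\psi_1$ at the points $\psi_2(x),\psi_2(y)$ and then the identity for $\psi_2$ yields
\[
    |\psi(x)-\psi(y)|=|\psi_1'(\psi_2(x))|^{\frac12}|\psi_1'(\psi_2(y))|^{\frac12}|\psi_2(x)-\psi_2(y)|=|\psi'(x)|^{\frac12}|\psi'(y)|^{\frac12}|x-y|,
\]
which is exactly the desired relation for $\psi$. (The only wrinkle is that $\psi_2$ might send $x$ or $y$ to $\infty$ even though $\psi$ does not; since the set of such $(x,y)$ is closed with empty interior and both sides are continuous where finite, the identity extends to it by density.)

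It then remains to treat the generators. For a translation $x\mapsto x+y_0$ or a rotation $x\mapsto\Omega x$ the map is a Euclidean isometry, so $|\psi(x)-\psi(y)|=|x-y|$, while $|\mathrm{Jac}(\psi)|\equiv1$ forces the conformal factor to be $\equiv1$; the identity is immediate. For a dilation $x\mapsto\lambda x$ one has $|\psi(x)-\psi(y)|=|\lambda|\,|x-y|$ and $|\mathrm{Jac}(\psi)|=|\lambda|^d$, so the conformal factor is the constant $|\lambda|$ and both sides again agree.

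The one case with genuine content is the inversion, which I would treat by writing $\psi(x)=-\overline{x}/|x|^2$ as $R\circ\iota$ with $\iota(x)=x/|x|^2$ and $R(x)=-\overline{x}$ a Euclidean reflection (hence an isometry with $|\mathrm{Jac}(R)|=1$). The chordal behaviour of $\iota$ rests on the algebraic identity
\[
    \big|\,|y|^2x-|x|^2y\,\big|^2=|x|^2|y|^4-2|x|^2|y|^2\,x\!\cdot\!y+|y|^2|x|^4=|x|^2|y|^2\,|x-y|^2,
\]
which upon dividing by $|x|^4|y|^4$ gives $|\iota(x)-\iota(y)|=|x-y|/(|x|\,|y|)$, hence $|\psi(x)-\psi(y)|=|x-y|/(|x|\,|y|)$. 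For the conformal factor I would compute $D\iota(x)=|x|^{-2}\big(I-2\widehat x\,\widehat x^{\top}\big)$ with $\widehat x=x/|x|$; since $I-2\widehat x\,\widehat x^{\top}$ is a reflection (a Householder matrix, of determinant $-1$), one gets $|\mathrm{Jac}(\iota)(x)|=|x|^{-2d}$, so the common singular value of $D\psi(x)$, i.e. the conformal factor $|\psi'(x)|$, equals $|x|^{-2}$. Combining, $|\psi'(x)|^{\frac12}|\psi'(y)|^{\frac12}|x-y|=|x-y|/(|x|\,|y|)=|\psi(x)-\psi(y)|$, which closes the last case.

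I expect the inversion to be the only step that requires any real computation — essentially the determinant of $D\iota$, which comes out cleanly precisely because $D\iota(x)$ is a scalar multiple of a Householder reflection — while the other three generators and the reduction to generators are bookkeeping; the single point to stay alert to is the handling of $\infty$ in intermediate images, dispatched by the density argument above.
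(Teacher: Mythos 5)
Your proof is correct and is exactly the argument the paper intends: the paper's entire proof is the single sentence ``it suffices to check that the relation holds for the four types of basic transforms,'' and you have supplied the reduction to generators (with the multiplicativity of the conformal factor under composition and the density argument for intermediate images of $\infty$) together with the verifications, the inversion identity $\norm{\,|y|^2x-|x|^2y\,}=|x|\,|y|\,\norm{x-y}$ being the only substantive computation. One small remark: you correctly take the conformal factor to be the linear scale factor $\norm{\mathrm{Jac}(\psi)(x)}^{1/d}$ (the common singular value of $D\psi(x)$), which is the convention under which the stated identity holds and which the rest of the paper uses, whereas the definition $\norm{\psi'(x)}=\norm{\mathrm{Jac}(\psi)(x)}^{2/d}$ displayed just before the lemma would force the exponents $\tfrac12$ to be replaced by $\tfrac14$ --- an inconsistency in the paper's normalisation, not in your argument.
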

\begin{proof}
It suffices to check that the relation holds for the four types of basic transforms.
\end{proof}

\subsection{Conformal operators in higher dimension: GJMS operators and the $\mathcal{Q}$-curvature}
In order to define a (quantum) field theory that transforms covariantly under conformal transformations, it is natural to consider geometric objects enjoying this property. For this purpose in this subsection we present the operators that appear in the definition of the Liouville action~\eqref{Liouville_action}, the so-called \textit{GJMS operators} $\mathcal{P}_g$ and $\mathcal{Q}$\textit{-curvature} $\mathcal{Q}_g$. 

\subsubsection{Definition of the operators}
These objects may be understood as generalizations of the two-dimensional \textit{Laplace-Beltrami operator} $\Delta_g$ and \textit{Gauss curvature} $K_g$ in the realm of conformal geometry. Indeed, recall the transformation rules under conformal changes of the metric for the two-dimensional Laplace-Beltrami operator $\Delta_g$ and Gauss curvature $K_g$: if $g$ is any Riemannian metric on $\mathcal M$ and $g'=e^{2\varphi}g$ is a metric conformally equivalent to $g$, then the quantities $\Delta_{g'},K_{g'}$ and $\Delta_g,K_g$ are related by
\begin{equation}\label{conformal_2d}
\Delta_{g'}=e^{-2\varphi}\Delta_g \quad\text{and}\quad K_{g'}e^{2\varphi}=\Delta_g\varphi+K_g.
\end{equation}

In higher dimensions, a similar transformation rule under a conformal change of metrics also holds for the GJMS operators and the $\mathcal{Q}$-curvature; it takes the form
\begin{equation}\label{eq:cov_GJMS}
    \mathcal{P}_{e^{2\varphi}g}=e^{-d\varphi}\mathcal{P}_g\quad\text{and}\quad
    \mathcal{Q}_{e^{2\varphi}g}e^{d\varphi}=\mathcal{P}_g\varphi+\mathcal{Q}_g.
\end{equation}
This transformation rule allows us to form quantities that are invariants of a given conformal class of metrics on $\mathcal M$, such as:
\[
    \int_{\mathcal M}f\mathcal{P}_g f\mathrm{d}\lambda_g\quad\text{and}\quad \int_{\mathcal M}\mathcal{Q}_g\mathrm{d}\lambda_g
\]
where $\mathrm{d}\lambda_g$ is the volume form in the metric $g$ (when $g$ is the Euclidean metric on $\mathbb{R}^d$ we keep the notation $\mathrm{d}\lambda$ for the Lebesgue measure) and $f:\mathcal{M}\to\mathbb R$ is smooth and compactly supported. These quantities can be understood as generalizations of the Dirichlet energy and total curvature (which in dimension $2$ is nothing but $2\pi\chi(\mathcal M)$ where $\chi(\mathcal M)$ is the Euler characteristic of $\mathcal M$). Note that these quantities are topological invariant in two dimensions, but this is no longer the case in higher dimensions ---actually the topological structure of manifolds in higher dimensions can be really wild compared to the rigidity of the two-dimensional case (as illustrated for instance by the existence of so-called \textit{exotic spheres}~\cite{M56}).
The existence of operators that transform according to Equation~\eqref{eq:cov_GJMS} is due to Graham, Jenne, Mason and Sparling in~\cite{GJMS}, after whom the terminology \lq\lq GJMS operators\rq\rq was coined:
\begin{theorem}
Let $\mathcal M$ be a compact manifold of dimension $d\geq3$, and $N$ be such that: 
\begin{align*}
    1\leq N \leq\frac d2 &\quad\text{if $d$ is even}\\
    1\leq N &\quad\text{if $d$ is odd}.
\end{align*}
Then there exist conformally covariant differential operators $\mathcal{P}_{2N}$ of the form
\[
(-\Delta)^N+\text{lower order terms}
\]
that satisfy the transformation rule
\begin{equation}
    \mathcal{P}_{2N}(e^{2\varphi}g)=e^{-(\frac d2+N)\varphi}\mathcal{P}_{2N}(g)\circ e^{(\frac d2-N)\varphi}
\end{equation}
for any Riemannian metric $g$ on $\mathcal M$ and smooth $\varphi:\mathcal{M}\to\mathbb{R}$.
On the Euclidean space $(\mathbb{R}^{d},\vert\mathrm{d}^2x\vert)$ the operator $\mathcal{P}_{2N}\left(\vert\mathrm{d}^2x\vert\right)$ coincides with $(-\Delta)^N$, that is a power of the standard Laplace-Beltrami operator.
\end{theorem}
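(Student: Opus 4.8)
The plan is to reproduce the construction of Graham--Jenne--Mason--Sparling, in which $\mathcal{P}_{2N}$ is obtained from powers of the Laplacian of the \emph{Fefferman--Graham ambient metric} attached to the conformal class $[g]$. First I would form the ray bundle $\mathcal{G}\subset S^2T^*M$ whose fibre over $x$ is the ray $\{t^2g_x:t>0\}$; it carries a tautological degenerate symmetric $2$-tensor and a dilation action $\delta_s$. The Fefferman--Graham theorem then supplies, on a neighbourhood of $\mathcal{G}\times\{0\}$ in $\tilde{\mathcal{M}}:=\mathcal{G}\times\mathbb{R}$, an \emph{ambient metric} $\tilde g$ of Lorentzian signature $(d+1,1)$ that is homogeneous of degree $2$ for $\delta_s$, restricts to the tautological tensor on $\mathcal{G}\times\{0\}$, and satisfies $\mathrm{Ric}(\tilde g)=0$ to infinite order when $d$ is odd, but only to a finite order when $d$ is even, the Fefferman--Graham obstruction tensor appearing at order $d/2$. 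This last point is exactly the source of the restriction $N\le d/2$ in even dimension.

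Next I would bring in conformal weights. A conformal density $f$ on $M$ of weight $w:=N-\tfrac d2$ corresponds to a function $\tilde f$ on $\mathcal{G}$ homogeneous of degree $w$, which I extend (arbitrarily) to $\tilde{\mathcal{M}}$ keeping that homogeneity. Since $\tilde g$ is homogeneous of degree $2$, the ambient Laplacian $\tilde\Delta=\tilde\Delta_{\tilde g}$ lowers homogeneity by $2$, so $\tilde\Delta^N\tilde f$ is homogeneous of degree $w-2N=-N-\tfrac d2$, and I set
\[
  \mathcal{P}_{2N}(g)f:=c_N\,\big(\tilde\Delta^N\tilde f\big)\big|_{\mathcal{G}\times\{0\}},
\]
read off in the trivialisation of $\mathcal{G}$ determined by the representative $g$, with $c_N$ a universal dimensional constant (requiring a limiting procedure in the borderline case $N=d/2$). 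Two things must be checked: (i) the restriction does not depend on the chosen homogeneous extension of $\tilde f$ off the cone $\mathcal{G}\times\{0\}$, and (ii) $\tilde\Delta^N$ is well defined on such extensions in the first place. Both come down to $\tilde g$ being Ricci-flat to the order actually needed, which is where the hypothesis $N\le d/2$ for $d$ even is used a second time. \textbf{This is the technical heart of the argument and the step I expect to be the main obstacle}: one must verify that the finitely many orders of Ricci-flatness available when $d$ is even suffice to make $\big(\tilde\Delta^N\tilde f\big)\big|_{\mathcal{G}\times\{0\}}$ unambiguous for every $N\le d/2$, while the argument genuinely breaks down one step further.

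It then remains to establish the three asserted properties. For the \emph{leading symbol}: a computation of $\tilde g$ in Fefferman--Graham normal coordinates $(t,x,\rho)$, with $\rho$ a defining function of $\mathcal{G}\times\{0\}$, shows that applying $\tilde\Delta$ to a homogeneous function and restricting to $\rho=0$ reproduces $-\Delta_g$ up to a weight-dependent scalar and lower-order curvature terms; iterating $N$ times gives that $\mathcal{P}_{2N}(g)$ equals $(-\Delta_g)^N$ plus a differential operator of order $\le 2N-2$ whose coefficients are polynomial in the curvature of $g$ and its covariant derivatives, and $c_N$ is fixed so that the top coefficient is $1$. For \emph{conformal covariance}: replacing $g$ by $e^{2\varphi}g$ leaves $\tilde g$ and $\tilde{\mathcal{M}}$ unchanged and only reparametrises the fibre coordinate $t$ by a factor $e^{\varphi}$; tracking how the degree-$w$ input and the degree-$(w-2N)$ output transform under this rescaling yields precisely $\mathcal{P}_{2N}(e^{2\varphi}g)=e^{-(\frac d2+N)\varphi}\,\mathcal{P}_{2N}(g)\circ e^{(\frac d2-N)\varphi}$. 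Finally, for $g=d^dx$ the Fefferman--Graham metric is the flat metric on an open subset of $\mathbb{R}^{d+1,1}$ and $\mathcal{G}\times\{0\}$ is its light cone, so homogeneous extensions and $\tilde\Delta^N$ are completely explicit and the restriction is literally $(-\Delta)^N$, which also pins down $c_N$ consistently with the normalisation above. (An alternative route, producing the same operators, would be to use the Poincar\'e--Einstein metric associated with $(M,[g])$ together with the meromorphic continuation of its scattering operator, following Graham--Zworski.)
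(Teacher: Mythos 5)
The paper does not actually prove this statement: it is quoted verbatim as the existence theorem of Graham--Jenne--Mason--Sparling and justified by the citation \cite{GJMS}, so there is no in-paper argument to compare yours against. Your outline is a faithful summary of the construction in that reference --- the metric cone $\mathcal{G}$, the Fefferman--Graham ambient metric $\tilde g$ of signature $(d+1,1)$ homogeneous of degree $2$, the identification of weight-$w$ densities with homogeneous functions, the definition $\mathcal{P}_{2N}f=c_N(\tilde\Delta^N\tilde f)|_{\mathcal{G}\times\{0\}}$, and the three verifications (leading symbol, covariance from reparametrising the fibre coordinate, flatness in the Euclidean case) are all exactly the right ingredients, and the source of the restriction $N\le d/2$ in even dimension is correctly located in the finite-order indeterminacy of $\tilde g$.

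That said, as a proof your write-up has a genuine gap precisely where you flag it. The independence of $(\tilde\Delta^N\tilde f)|_{\mathcal{G}\times\{0\}}$ from the choice of homogeneous extension is not merely a consequence of ``Ricci-flatness to the order needed'': it rests on the explicit commutation identity $\tilde\Delta(\tilde Q\,h)=\tilde Q\,\tilde\Delta h+(2w+d+2)\,h$ (modulo curvature errors controlled by $\mathrm{Ric}(\tilde g)$), where $\tilde Q$ is the defining function of the cone and $h$ is homogeneous of degree $w$. Changing the extension replaces $\tilde f$ by $\tilde f+\tilde Q h$ with $h$ of degree $w-2$, and iterating the identity produces a telescoping product of the factors $2(w-2k)+d+2$, $k=0,\dots,N-1$; one of these vanishes exactly when $w=N-\tfrac d2$, which is what kills the ambiguity and simultaneously explains why the construction breaks for $N>\tfrac d2$ when $d$ is even. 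Without this computation the definition is not shown to be well posed, so it needs to be carried out (or explicitly delegated to \cite{GJMS}). Two smaller corrections: the operator $\mathcal{P}_d$ at the borderline $N=\tfrac d2$ is obtained directly by this construction with no limiting procedure --- the analytic continuation in the dimension is needed only for the zeroth-order term (the $\mathcal{Q}$-curvature), as the surrounding text of the paper notes; and the normalisation $c_N$ is fixed by the flat-space computation rather than being free.
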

Of particular interest is the \textit{critical GJMS operator} $\mathcal{P}_d$ obtained by taking $N=\frac{d}{2}$ in the previous expression, which corresponds to the operator presented above. At this stage it is worth pointing out that this operator is actually well-defined only when the dimension of the manifold is even: as a consequence \textbf{we will always assume that we work in even dimensions in the sequel.} When the dimension of the manifold is odd, it is still possible to construct such operators but they are no longer differential operators but rather pseudo-differential operators (for instance it is given by $(-\Delta)^{3/2}$ for the $3$-dimensional Euclidean space). The initial approach used to define the $\mathcal{Q}$-curvature uses an argument of  \textit{analytic continuation} in the dimension from the more general scalar Riemannian invariant $\mathcal{Q}_{2N}$, defined as the terms of order zero of the GJMS operators (indeed, the latter is not well defined when we consider the critical GJMS operator since it annihilates constants). For a more detailed introduction see \cite{Bra95}. In the rest of the document we will work with the critical GJMS operator $\mathcal{P}_d(g)$ (resp. $\mathcal{Q}$-curvature $\mathcal{Q}_{d}(g)$) which we denote by $\mathcal{P}_g$ (resp. $\mathcal{Q}_g$).

The construction of these operators is not always explicit, and we know their expressions only in a few special cases, that is in low dimensions and for special manifolds: in dimension $d=4$ they correspond to the \textit{Paneitz operators} introduced in \cite{Paneitz}. In that case, their construction can be made explicit by setting
\[
\mathcal{P}_g=\Delta_g^2+\text{div}_g\left(\frac23 S_gg-2R_g\right)\circ \mathrm d\quad\text{and}\quad\mathcal{Q}_g=-\frac 1{12}\left(\Delta_gS_g-S_g^2+3\norm{R_g}^2\right),
\]
where $R_g$ is the Ricci tensor and $S_g$ the scalar curvature while $\mathrm d$ is the exterior derivative. Then the total curvature 
\[
\int_{\mathcal M} \mathcal Q_g\mathrm{d}\lambda_g
\]
is a conformal invariant of the manifold but unlike in two dimensions it is no longer a topological invariant.

In the sequel we will consider the special case where the manifold on which we work is the sphere of even dimension $\mathbb{S}^d$ equipped with its standard metric $g_0$\footnote{Quantities related to the sphere will be denoted with an index $0$.}. There are several reasons why we choose to work on this manifold:
\begin{itemize}
    \item The first one is technical: in this special case, the expression of the GJMS operator is explicit and given by 
    \[
\mathcal{P}_0\coloneqq\mathcal{P}_{g_0}=\prod_{k=0}^{\frac {n-2}2}\left(-\Delta_{g_0}+k(n-k-1)\right)
\]
where $\Delta_{g_0}$ is the standard Laplace-Beltrami operator on the sphere, while its $\mathcal{Q}$-curvature is constant and given by $\mathcal{Q}_{g_0}=(d-1)!$.
\item The sphere is conformally equivalent to the compactified space $\mathbb{R}^d\cup\lbrace\infty\rbrace$ equipped with the round metric
\begin{equation}\label{eq:round_metric}
\hat{g}=\frac{4}{\left(1+\norm{x}^2\right)^2}\vert\mathrm{d}^2x\vert
\end{equation}
which is obtained by stereographic projection (which is a conformal mapping) and that we will work with in what follows.
\item Eventually, working with the sphere actually covers the more general case of locally conformally flat manifolds, thanks to  a certain property of universality~\cite[Theorem 6]{K49}:
\begin{theorem}
Assume that $(\mathcal M,g)$ is a $d$-dimensional Riemannian manifold which is compact, simply connected and without boundary. If in addition $(\mathcal M,g)$ is locally conformally flat then it is conformally equivalent to the sphere $\mathbb{S}^d$.
\end{theorem}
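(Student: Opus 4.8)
The plan is to exhibit the locally conformally flat structure on $(M,g)$ as a \emph{flat conformal structure} modelled on the round sphere $(\mathbb{S}^d,g_0)$ with transition maps in the M\"obius group, to build the associated developing map, and then to use compactness together with simple connectedness to promote this developing map to a global conformal diffeomorphism onto $\mathbb{S}^d$. In dimension $d=2$ the statement follows from the uniformisation theorem (every surface is locally conformally flat, and a compact simply connected surface is the round sphere up to a conformal factor), so throughout I would assume $d\geq 3$, which is exactly the regime in which Liouville's rigidity theorem is available.

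First I would set up a suitable atlas. By definition of local conformal flatness, every point of $M$ admits a neighbourhood carrying a conformal diffeomorphism onto an open subset of $(\mathbb{R}^d,d^2x)$; post-composing with the (conformal) stereographic projection one obtains charts $\varphi_i:U_i\to\mathbb{S}^d=\mathbb{R}^d\cup\lbrace\infty\rbrace$ with open images whose transition maps $\varphi_j\circ\varphi_i^{-1}$ are conformal diffeomorphisms between open subsets of $\mathbb{S}^d$. Invoking Liouville's theorem recalled above (valid precisely because $d\geq 3$), each such transition map is the restriction of a M\"obius transform, so the transition cocycle of this atlas takes values in the M\"obius group and $(M,g)$ acquires a flat conformal structure modelled on the conformal sphere, with each chart satisfying $\varphi_i^*g_0 = e^{2\omega_i}g$ on $U_i$.

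Next I would construct the developing map. Starting from one fixed chart, I would analytically continue it along paths in $M$, correcting the new chart at each overlap by the unique M\"obius transform making the two charts agree there; along each path this produces a M\"obius-valued germ. Because $M$ is simply connected the monodromy of this continuation vanishes, so the construction globalises to a single conformal local diffeomorphism $D:M\to\mathbb{S}^d$ (with trivial holonomy), and by construction $D^*g_0$ is globally conformal to $g$. Finally I would exploit compactness: since $D$ is a local diffeomorphism, $D(M)$ is open in $\mathbb{S}^d$, while compactness of $M$ and Hausdorffness of $\mathbb{S}^d$ make $D(M)$ closed as well, so connectedness of $\mathbb{S}^d$ forces $D(M)=\mathbb{S}^d$. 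A surjective local diffeomorphism from a compact manifold is a proper submersion, hence (by Ehresmann's fibration theorem) a fibre bundle with finite fibres, i.e.\ a covering map; since $\mathbb{S}^d$ is simply connected this covering has a single sheet, so $D$ is a diffeomorphism, and being conformal it realises $(M,g)$ as conformally equivalent to $(\mathbb{S}^d,g_0)$.

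The hard part is precisely this last promotion of $D$ from a conformal immersion to a conformal diffeomorphism: it is where both hypotheses are genuinely used --- simple connectedness to make $D$ well defined and to kill its holonomy, compactness to force surjectivity and properness --- and the conclusion is well known to fail as soon as either hypothesis is dropped (flat conformal tori, or quotients by Kleinian groups). Everything else reduces to the cited Liouville rigidity theorem and routine bookkeeping for $(G,X)$-structures, so I would expect the write-up effort to concentrate on making the developing-map construction and the covering-space argument precise.
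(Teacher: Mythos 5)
Your argument is correct. The paper does not prove this statement at all --- it is quoted directly from Kuiper \cite[Theorem 6]{K49} --- and the developing-map argument you give (Liouville rigidity to obtain a M\"obius atlas, simple connectedness to kill the monodromy of the analytic continuation, compactness to make the developing map a surjective proper local diffeomorphism and hence a trivial covering of the simply connected sphere) is precisely the classical proof of Kuiper's theorem, so there is nothing to object to beyond noting that in the paper's setting $d$ is even and at least $4$, which places you squarely in the Liouville regime without needing the separate two-dimensional uniformisation remark.
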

\end{itemize}

As can be seen for instance in the case of the sphere, the GJMS operators are (in general) non-negative operators, in the sense that if $f$ is a smooth function on $\mathbb{S}^d$ then
\begin{equation}
(f,\mathcal{P}_gf)_g:=\frac{1}{\gamma_d}\int_{\mathbb S^d}f(y)\mathcal{P}_gf(y)\mathrm{d}\lambda_g(y)
\end{equation}
is non-negative and its value is independent of $g$ in the conformal class of $g_0$. Here the constant $\gamma_d$ is given by $\gamma_d:=\frac{(d-1)!\norm{\mathbb{S}^d}}2$. For general $(\mathcal{M},g)$ to ensure positivity of these GJMS operators one needs some assumptions to hold (\textit{e.g.} in dimension four having nonnegative Yamabe invariant and total $\mathcal{Q}$-curvature~\cite{G99}). Of special interest are the manifolds for which the kernel of this integral operator is made of constants, which is the case for the sphere and for a large class of manifolds.

Under these two assumptions (\textit{i.e.} that $\mathcal{P}_g$ is non-negative with $\ker \mathcal{P}_g=\lbrace\text{constants}\rbrace$) it is natural to work with the Sobolev space $H^{\frac{d}{2}}(\mathcal M,g)$ which can be defined as the Hilbert-space completion of the set of smooth functions with compact support in $(\mathcal M,g)$ with respect to the norm
\begin{equation}
    \norm{\norm{u}}_{H^{\frac{d}{2}}_g}:=\frac{1}{\gamma_d}\int_{\mathcal M} (u\mathcal{P}_gu+\gamma_du^2)\mathrm{d}\lambda_g.
\end{equation}

\subsubsection{Green's kernels for GJMS operators}
The next objects that we need to introduce are the Green's kernels $G_g$ associated to these GJMS operators, and which correspond to the kernels of the integral operators $\mathcal{P}_g^{-1}$. More precisely, these kernels are characterized as symmetric kernels $G_g:\mathcal{M}\times\mathcal{M}\to\mathbb{R}\cup\{\infty\}$ such that for any compactly supported smooth function $f$, one has
\[
    \begin{cases}
    \frac1{\gamma_d}\int_{\mathcal M} G_{g}(x,y)\mathcal{P}_gf(y)\mathrm{d}\lambda_g(y)&=f(x)-m_g(f)\\
    \int_{\mathcal M} G_g(x,y)\mathrm{d}\lambda_g(x) &= 0\\
\end{cases}
\]
where \begin{equation}
m_g(f)=\frac{1}{\lambda_g(\mathcal M)}\int_{\mathcal M}f(y)\mathrm{d}\lambda_g(y).
\end{equation}

In general it is possible to construct such a kernel; one of its features will be a logarithmic singularity on the diagonal, which means that $G_g(x,y)\sim\ln\frac{1}{d_g(x,y)}$ as $x$ gets close to $y$. However we won't discuss the general case in the present document and rather focus on the case where the manifold considered is the $d$-dimensional sphere $\mathbb{S}^d$, or equivalently by stereographic projection the projective space $\mathbb{R}^d\cup\lbrace\infty\rbrace$ (always with $d$ even). By doing so the expression of the Green's function becomes particularly simple:
\begin{prop}\label{Green_def}
For any Riemannian metric $g$ conformal to $\hat g$, consider the symmetric kernel $G_g(x,y):\mathbb{R}^d\times\mathbb{R}^d\rightarrow\mathbb{R}\cup\lbrace\infty\rbrace$ given by 
\begin{equation}
    G_g(x,y)=\ln\frac{1}{\norm{x-y}}-m_g\left(\ln\frac{1}{\norm{x-\cdot}}\right)-m_g\left(\ln\frac{1}{\norm{y-\cdot}}\right)+\theta_g
\end{equation}
where \[\theta_g=\frac{1}{\lambda_g(\mathbb{R}^d)^2}\int_{\mathbb{R}^d}\int_{\mathbb{R}^d}\ln\frac{1}{\norm{x-y}}\mathrm{d}\lambda_g(x)\mathrm{d}\lambda_g(y).
\]
Then $G_g$ has zero $\lambda_g$-mean:
    \[
    \int_{\mathbb{R}^d}G_g(x,y)\mathrm{d}\lambda_g(y)=0
    \]
    and is such that for any $f$ with compact support in $\mathbb{R}^d$ and $x\in\mathbb{R}^d$
    \[
     \frac{1}{\gamma_d}\int_{\mathbb{R}^d}G_g(x,y)\mathcal{P}_gf(y)\mathrm{d}\lambda_g(y)=f(x)-m_g(f).
    \] 
\end{prop}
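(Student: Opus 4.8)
The plan is to transport the problem to the flat Euclidean Laplacian by exploiting the conformal covariance of the critical GJMS operator, and then to recognise $\ln\frac{1}{\norm{x-\cdot}}$ as a multiple of the fundamental solution of $(-\Delta)^{d/2}$.

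First I would fix the conformal data. Writing $\hat g=e^{2u}d^2x$ with $u(x)=\ln\frac{2}{1+\norm{x}^2}$, any metric conformal to the round one has the form $g=e^{2\sigma}d^2x$ for some smooth $\sigma$ on $\mathbb R^d$ with $\sigma(x)=-2\ln\norm{x}+O(1)$ as $\norm{x}\to\infty$; consequently $\lambda_g(\mathbb R^d)<\infty$, and since the logarithmic singularity of $\ln\frac1{\norm{\cdot}}$ is locally integrable while $d\lambda_g=e^{d\sigma}d^dx$ decays like $\norm{x}^{-2d}$, the quantities $m_g$, $\theta_g$ and hence $G_g$ are well defined. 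As $N=\frac d2$ is the critical value, the transformation rule degenerates to $\mathcal P_g=e^{-d\sigma}(-\Delta)^{d/2}$; hence for $f$ smooth with compact support in $\mathbb R^d$ the conformal factors cancel exactly,
\[
\int_{\mathbb R^d}G_g(x,y)\,\mathcal P_g f(y)\,d\lambda_g(y)=\int_{\mathbb R^d}G_g(x,y)\,(-\Delta)^{d/2}f(y)\,d^dy,
\]
and the whole problem reduces to the flat polyharmonic operator.

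Next I would compute $(-\Delta)^{d/2}_y G_g(x,y)$ in $\mathcal D'(\mathbb R^d_y)$. In even dimension $d$ one has $(-\Delta)^{d/2}\ln\frac1{\norm{\cdot}}=c_d\,\delta_0$ for a positive constant $c_d$, computable by iterating $-\Delta$, by Fourier analysis, or — which also pins down its value against the normalisation of the statement — by feeding $u$ into the $\mathcal Q$-curvature identity $\mathcal P_{d^dx}u+\mathcal Q_{d^dx}=\mathcal Q_{\hat g}e^{du}$ and using $\mathcal Q_{d^dx}=0$, $\mathcal Q_{\hat g}\equiv(d-1)!$. Decomposing $G_g(x,y)=\ln\frac1{\norm{x-y}}-m_g\!\big(\ln\frac1{\norm{y-\cdot}}\big)+(\text{terms constant in }y)$, the constant terms are annihilated by $(-\Delta)^{d/2}$ (as $\frac d2\ge 1$), the first term gives $c_d\delta_x$, and differentiating the remaining term under the integral — a legitimate use of Fubini once it is paired with a test function — gives $-c_d\,\lambda_g(\mathbb R^d)^{-1}e^{d\sigma(\cdot)}$, the density of $d\lambda_g$ being $e^{d\sigma}$. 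Hence $(-\Delta)^{d/2}_y G_g(x,y)=c_d\big(\delta_x-\lambda_g(\mathbb R^d)^{-1}e^{d\sigma(\cdot)}\big)$ in $\mathcal D'$. Undoing the integration by parts — valid since $G_g(x,\cdot)\in L^1_{\mathrm{loc}}(\mathbb R^d)$ and $f$ is compactly supported, so there are no boundary terms — and inserting the normalising prefactor of the statement yields the second bullet, the $e^{d\sigma}$-term becoming $-m_g(f)$ because $\int f\,d\lambda_g=\lambda_g(\mathbb R^d)m_g(f)$; the calibration of $c_d$ with that prefactor is exactly the content of the polyharmonic fundamental-solution formula and is consistent with $\mathcal Q_0=(d-1)!$.

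For the first bullet, the identity $\int_{\mathbb R^d}G_g(x,y)\,d\lambda_g(y)=0$ is a short direct computation: the four summands of $G_g$ integrate respectively to $\lambda_g(\mathbb R^d)\,m_g(\ln\frac1{\norm{x-\cdot}})$, its negative, $-\lambda_g(\mathbb R^d)\,\theta_g$ and $\lambda_g(\mathbb R^d)\,\theta_g$ — using $\int h\,d\lambda_g=\lambda_g(\mathbb R^d)m_g(h)$, Fubini, and the definition of $\theta_g$ for the iterated integral — and they cancel in pairs. That $G_g(x,\cdot)$ lies in the dual of $H^{\frac d2}(\mathbb R^d,g)$ follows from its structure: it is smooth away from $y=x$, has a purely logarithmic singularity at $y=x$ (so it lies in every $L^p_{\mathrm{loc}}$, with $G_g(x,\cdot)-\ln\frac1{\norm{x-\cdot}}$ continuous), which together with the critical Moser–Trudinger embedding $H^{\frac d2}\hookrightarrow\bigcap_{p<\infty}L^p$ and the weighted integrability at infinity built into the $H^{\frac d2}(\mathbb R^d,g)$-norm makes the pairing bounded; alternatively this is a consequence of the identity just established and elliptic regularity for $\mathcal P_g$. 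The main obstacle is the analytic bookkeeping around these distributional manipulations: justifying the exchange of $(-\Delta)^{d/2}$ with the integration against $d\lambda_g(z)$ and the integration by parts against the singular, logarithmically growing kernel $G_g(x,\cdot)$, and verifying that the fundamental-solution constant $c_d$ is exactly calibrated with the normalisation appearing in the statement; the rest is routine.
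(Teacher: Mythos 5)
Your proposal is correct and follows essentially the same route as the paper: both reduce the second bullet to the flat operator via the critical conformal covariance $\mathcal{P}_g f\,d\lambda_g=(-\Delta)^{d/2}f\,d^dy$ and then exploit that $\ln\frac{1}{\norm{\cdot}}$ is a fundamental solution of $(-\Delta)^{d/2}$, with the $y$-constant terms annihilated and the $-m_g\bigl(\ln\frac{1}{\norm{y-\cdot}}\bigr)$ term producing $-m_g(f)$ by Fubini. The only real difference is that the paper derives the fundamental-solution identity by hand (Green's identities on $\mathbb{R}^d\setminus B(x,\varepsilon)$ together with explicit formulas for $(-\Delta)^m\ln\frac{1}{\norm{x-y}}$), whereas you invoke it and calibrate the constant against the $\mathcal{Q}$-curvature identity for the round metric --- a legitimate shortcut which in fact pins the constant down as $\tfrac{1}{2}(d-1)!\norm{\mathbb{S}^d}$, consistent with the normalisation the statement requires.
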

\begin{proof}
The fact that $G_g$ has zero mean in the metric $g$ is straightforward. For the second point we start by considering the case where $g$ is the flat metric $\norm{d^2x}$ and set $G(x,y):=\ln\frac 1{x-y}$. Then for any compactly supported smooth function $f$ and positive $\varepsilon$
\begin{align*}
    &\int_{\mathbb{R}^d}\ln\frac{1}{\norm{x-y}}(-\Delta)^{\frac d2}f(y)\mathrm{d}\lambda(y)\\
    =&\int_{\mathbb{R}^d\setminus B(x,\varepsilon)}\ln\frac{1}{\norm{x-y}}(-\Delta)^{\frac d2}f(y)\mathrm{d}\lambda(y)+\int_{B(x,\varepsilon)}\ln\frac{1}{\norm{x-y}}(-\Delta)^{\frac d2}f(y)\mathrm{d}\lambda(y).
\end{align*}
The function $y\mapsto\ln\frac{1}{\norm{x-y}}$ is smooth outside of $B(x,\varepsilon)$, therefore we can use integration by parts to get that
\begin{align*}
    &\int_{\mathbb{R}^d\setminus B(x,\varepsilon)}\ln\frac{1}{\norm{x-y}}(-\Delta)^{\frac d2}f(y)\mathrm{d}\lambda(y)=\int_{\mathbb{R}^d\setminus B(x,\varepsilon)}(-\Delta)_y^{\frac d2}\ln\frac{1}{\norm{x-y}}f(y)\mathrm{d}\lambda(y)+\\
    &\sum_{k=0}^{\frac{d}{2}-1}\int_{\partial B(x,\varepsilon)} \left((-\Delta)^kf(y)\frac{\partial}{\partial_n}((-\Delta)_y^{\frac d2-k-1}\ln\frac{1}{\norm{x-y}})-(-\Delta)_y^k\ln\frac{1}{\norm{x-y}}\frac{\partial}{\partial_n}(-\Delta)^{\frac d2-k-1}f(y))\right)\mathrm{d}\lambda_{\partial}(y).
\end{align*}
Now, a standard Laplacian computation shows the expressions
\begin{align}
    (-\Delta)_y^m\ln\frac{1}{\norm{x-y}}=\frac{2^{m-1}(m-1)!}{\norm{x-y}^{2m}} \prod_{k=1}^m (d-2k) & \quad\text{for $m$ positive integer,}\\
    \frac{\partial}{\partial_n}(-\Delta)_y^m\ln\frac{1}{\norm{x-y}}=\frac{2^mm!}{\norm{x-y}^{2m+1}}\prod_{k=1}^m (d-2k) & \quad\text{for $m$ integer}.
\end{align}
As a consequence the first term in the expression above vanishes. Similarly, of the terms that appear in the sum, the only whose order is that of the volume of $\partial B(x,\varepsilon)$ as $\varepsilon\rightarrow0$ is given by $k=0$.
Therefore
\[
\lim\limits_{\varepsilon\rightarrow0} \int_{\mathbb{R}^d\setminus B(x,\varepsilon)}\ln\frac{1}{\norm{x-y}}(-\Delta)^{\frac d2}f(y)\mathrm{d}\lambda(y)=f(x)\frac{\left(2^{\frac d2}(\frac d2-1)!\right)^2}{2} \norm{\mathbb{S}^{d-1}}.
\]
The same reasoning applies to the term $\int_{B(x,\varepsilon)}\ln\frac{1}{\norm{x-y}}\Delta^{\frac d2}f(y)\mathrm{d}\lambda(y)$, which is negligible as $\varepsilon\rightarrow0$.
To finish up, notice that for $d$ an even integer, $\frac{\left(2^{\frac d2}(\frac d2-1)!\right)^2}{2} \norm{\mathbb{S}^{d-1}}=\gamma_d$; as a consequence we have proved that for smooth and compactly supported $f$:
\begin{equation}
\frac1\gamma_d \int_{\mathbb{R}^d}\ln\frac{1}{\norm{x-y}}(-\Delta)^{\frac d2}f(y)\mathrm{d}\lambda(y)=f(x).
\end{equation}

Therefore if $g$ is in the conformal class of $\hat g$: 
\begin{align*}
   &\frac1\gamma_d\int_{\mathbb{R}^d}\left(\ln\frac{1}{\norm{x-y}}-m_g\left(\ln\frac{1}{\norm{x-\cdot}}\right)-m_g\left(\ln\frac{1}{\norm{y-\cdot}}\right)+\theta_g\right)\mathcal{P}_gf(y)\mathrm{d}\lambda_g(y) \\
   &= \frac1\gamma_d\int_{\mathbb{R}^d}\left(\ln\frac{1}{\norm{x-y}}-m_g\left(\ln\frac{1}{\norm{x-\cdot}}\right)-m_g\left(\ln\frac{1}{\norm{y-\cdot}}\right)+\theta_g\right)(-\Delta)^{\frac d2}f(y)\mathrm{d}\lambda(y)\\
   &=f(x) + \left(\theta_g-m_g\left(\ln\frac{1}{\norm{x-\cdot}}\right)\right)\int_{\mathbb{R}^d}(-\Delta)^{\frac d2}f(y)\mathrm{d}\lambda(y)-\frac{1}{\lambda_g(\mathbb{R}^d)}\iint_{\mathbb{R}^d\times\mathbb{R}^d}\ln\frac{1}{\norm{y-z}}(-\Delta)^{\frac d2}f(y)\mathrm{d}\lambda(y)\mathrm{d}\lambda_g(z)\\
   &=f(x) -\frac{1}{\lambda_g(\mathbb{R}^d)}\int_{\mathbb{R}^d}f(z)\mathrm{d}\lambda_g(z)\\
   &=f(x)-m_g(f).
\end{align*}
\end{proof}

Before moving on, we raise some elementary statements in the case where $g$ is the round metric $\hat g$.  To start with we provide an explicit expression for the Green's kernels $G_{\hat g}$:
\begin{lemma}\label{lemma:round}
Let $\hat{g}$ be the round metric on $\mathbb{R}^d$. Then
\begin{equation}
G_{\hat{g}}(x,y)=\ln\frac{1}{\norm{x-y}}-\frac{1}{4}\left(\ln\hat{g}(x)+\ln\hat{g}(y)\right)+C_{\hat{g}}
\end{equation}
where $C_{\hat{g}}$ is some real constant.
\end{lemma}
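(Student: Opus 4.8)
The plan is to reduce the statement to showing that the function
\[
h(x):=m_{\hat g}\!\left(\ln\frac1{\norm{x-\cdot}}\right)-\frac14\ln\hat g(x)
\]
is constant on $\mathbb{R}^d$; here I recall that $\hat g(x)=\frac4{(1+\norm x^2)^2}$, that $d\lambda_{\hat g}=\hat g^{\frac d2}d^dx$, and that $\lambda_{\hat g}(\mathbb{R}^d)=\norm{\mathbb{S}^d}$. Comparing the formula for $G_{\hat g}$ in Proposition \ref{Green_def} with the one claimed, the Lemma is exactly the assertion that $h$ is constant, the constant $C_{\hat g}$ then being $\theta_{\hat g}$ minus twice its value. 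So the whole proof consists in establishing this.

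First I would show that $(-\Delta)^{\frac d2}h\equiv0$ in the distributional sense. Applying $(-\Delta)^{\frac d2}_x$ to $m_{\hat g}(\ln\frac1{\norm{x-\cdot}})=\frac1{\norm{\mathbb{S}^d}}\int\ln\frac1{\norm{x-y}}\hat g(y)^{\frac d2}d^dy$, and using the flat-space identity $\int\ln\frac1{\norm{x-y}}(-\Delta)^{\frac d2}\psi(y)d^dy=\gamma_d\psi(x)$ for compactly supported $\psi$ (established in the proof of Proposition \ref{Green_def}) together with Fubini's theorem, one gets $(-\Delta)^{\frac d2}_x\,m_{\hat g}(\ln\frac1{\norm{x-\cdot}})=\frac{\gamma_d}{\norm{\mathbb{S}^d}}\hat g(x)^{\frac d2}$. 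On the other hand, writing $\hat g=e^{2\varphi}d^dx$ so that $\frac14\ln\hat g=\frac12\varphi$ and $e^{d\varphi}=\hat g^{\frac d2}$, the conformal transformation rule for the $\mathcal Q$-curvature, applied with base metric $d^dx$ (for which $\mathcal P_{d^dx}=(-\Delta)^{\frac d2}$ and $\mathcal Q_{d^dx}=0$), gives $(-\Delta)^{\frac d2}\varphi=\mathcal Q_{\hat g}e^{d\varphi}$, and since $\mathcal Q_{\hat g}=(d-1)!$ is constant on $\mathbb{S}^d$ this reads $(-\Delta)^{\frac d2}(\frac14\ln\hat g)=\frac{(d-1)!}2\hat g^{\frac d2}$. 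The two normalising constants coincide --- this is just the relation tying $\gamma_d$ to the total $\mathcal Q$-curvature of the sphere --- so $(-\Delta)^{\frac d2}h=0$, and by hypoellipticity of $(-\Delta)^{\frac d2}$ the function $h$ is smooth and polyharmonic in the classical sense. (One can also bypass the $\mathcal Q$-curvature and compute $(-\Delta)^{\frac d2}\ln(1+\norm x^2)$ directly, exactly as in the proof of Proposition \ref{Green_def}.)

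Second I would control $h$ at infinity. From $\hat g(x)=\frac4{(1+\norm x^2)^2}$ one has $\frac14\ln\hat g(x)=-\ln\norm x+\frac14\ln4+o(1)$ as $\norm x\to\infty$. For the other term, write $\norm{x-y}=\norm x\,\norm{\frac x{\norm x}-\frac y{\norm x}}$ and substitute $y=\norm xz$: the family of measures $\frac1{\norm{\mathbb{S}^d}}\frac{2^d\norm x^d}{(1+\norm x^2\norm z^2)^d}d^dz$ has total mass $1$ for every $x$ (substitute $w=\norm xz$) and converges weakly to the Dirac mass at the origin as $\norm x\to\infty$, so that $m_{\hat g}(\ln\frac1{\norm{x-\cdot}})=-\ln\norm x+o(1)$. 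Hence $h(x)\to-\frac14\ln4$ as $\norm x\to\infty$; since $h$ is continuous (the logarithmic singularity of $\ln\frac1{\norm{x-\cdot}}$ is integrable against $d\lambda_{\hat g}$), it is bounded on all of $\mathbb{R}^d$.

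Finally, a bounded polyharmonic function on $\mathbb{R}^d$ is constant (the standard Liouville-type theorem: a polyharmonic function of finite order with at most polynomial growth is a polynomial, hence constant if bounded). Therefore $h$ is constant, which proves the Lemma. I expect the one genuinely technical step to be the asymptotic analysis of $m_{\hat g}(\ln\frac1{\norm{x-\cdot}})$ at infinity; by contrast, the fact that both sides of the claimed identity solve the same order-$d$ elliptic equation is immediate once one has Proposition \ref{Green_def} and the transformation rule for $\mathcal Q_{\hat g}$. I would resist trying to argue purely by conformal covariance under the isometry group of $\mathbb{S}^d$: the scaling Lemma holds only away from the preimage of $\infty$, so the integral defining $m_{\hat g}$ does not transform in the naive way under an isometry that moves the north pole.
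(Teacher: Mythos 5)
Your proof is correct and follows essentially the same route as the paper: both reduce the lemma to showing that $h(x)=m_{\hat g}\left(\ln\frac{1}{\norm{x-\cdot}}\right)-\frac14\ln\hat g(x)$ is annihilated by $(-\Delta)^{\frac d2}$ (via the $\mathcal Q$-curvature transformation rule for the round metric against the flat one) and bounded, hence constant. The only difference is the last step, where the paper notes that $h$ is radial and applies its elementary Lemma \ref{radial_harmonic}, while you invoke the general Liouville theorem for bounded polyharmonic functions; both are valid, and your asymptotic analysis of $m_{\hat g}\left(\ln\frac{1}{\norm{x-\cdot}}\right)$ at infinity supplies detail the paper only asserts.
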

\begin{proof}
According to the conformal change of metric formula, and since the round metric $\hat g$ has constant $\mathcal{Q}$-curvature equal to $(d-1)!$ while the flat one $\norm{d^2x}$ has zero $\mathcal{Q}$-curvature, we know that 
\[ (d-1)!\hat{g}(x)^{\frac d2}=(-\Delta)^{\frac d2}\frac12\ln \hat{g}(x).
\]
As a consequence the function given by 
\[
    F(x):=2m_{\hat{g}}(\frac{1}{\ln\norm{x-\cdot}})-\frac12\ln\hat{g}(x)+\ln 2
\]
is such that $\Delta^{\frac d2}F=0$. Moreover $F(x)$ converges toward $0$ as $\norm{x}\rightarrow+\infty$ and is continuous at $x=0$. Since $F$ depends only on $\norm{x}$ (because this is the case for $\hat g$) we can use Lemma~\ref{radial_harmonic} to conclude that we must have $F=0$, which yields the result.
\end{proof}
Our next statement highlights the relationship between Green's kernels and M\"obius transforms. Indeed, Green's kernels share a property of covariance under M\"obius transforms, which takes for the round metric $\hat g$ the following form:
\begin{lemma}\label{Mobius_Green}
Let $\psi$ be a M\"obius transform of $\mathbb{R}^d$ and $x,y$ be any two points in $\mathbb{R}^d$ not mapped to $\infty$. Then
\begin{equation}
    G_{\hat{g}}(\psi(x),\psi(y))=G_{\hat{g}}(x,y)-\frac{1}{4}\left(\ln \norm{\psi'(x)}^2\frac{\hat{g}(\psi(x))}{\hat{g}(x)}+\ln \norm{\psi'(y)}^2\frac{\hat{g}(\psi(y))}{\hat{g}(y)}\right).
\end{equation}
\end{lemma}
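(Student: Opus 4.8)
\emph{Proof plan.} The idea is to reduce the statement to two facts already available: the explicit formula for the round Green's kernel proved in the previous lemma,
\[
G_{\hat{g}}(x,y)=\ln\frac{1}{\norm{x-y}}-\frac{1}{4}\left(\ln\hat{g}(x)+\ln\hat{g}(y)\right)+C_{\hat{g}},
\]
and the scaling identity for Möbius transforms established above, $\norm{\psi(x)-\psi(y)}=\norm{\psi'(x)}^{1/2}\norm{\psi'(y)}^{1/2}\norm{x-y}$. First I would apply the explicit formula at the points $\psi(x),\psi(y)$, and insert the scaling identity in the form $\ln\frac{1}{\norm{\psi(x)-\psi(y)}}=\ln\frac{1}{\norm{x-y}}-\tfrac12\ln\norm{\psi'(x)}-\tfrac12\ln\norm{\psi'(y)}$. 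This yields $G_{\hat g}(\psi(x),\psi(y))$ as an explicit expression in $\norm{x-y}$, the conformal factors, and the values of $\hat g$ at $x,y,\psi(x),\psi(y)$.

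Next I would subtract from this the explicit formula for $G_{\hat g}(x,y)$. The logarithmic term $\ln\frac{1}{\norm{x-y}}$ and the constant $C_{\hat g}$ — which depends only on the metric, not on the points — cancel identically, leaving
\[
G_{\hat g}(\psi(x),\psi(y))-G_{\hat g}(x,y)=-\tfrac12\ln\norm{\psi'(x)}-\tfrac12\ln\norm{\psi'(y)}-\tfrac14\ln\frac{\hat g(\psi(x))}{\hat g(x)}-\tfrac14\ln\frac{\hat g(\psi(y))}{\hat g(y)}.
\]
Grouping the $x$-terms and the $y$-terms separately and rewriting $-\tfrac12\ln\norm{\psi'(x)}=-\tfrac14\ln\norm{\psi'(x)}^2$ (legitimate since the conformal factor is positive) produces exactly the right-hand side claimed in the lemma. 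The only points requiring care are that $x,y$ and their images avoid $\infty$, so that the explicit formula for $G_{\hat g}$ applies verbatim — this is precisely the hypothesis — and that the same constant $C_{\hat g}$ appears on both sides.

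I do not expect a genuine obstacle here: the lemma is a short bookkeeping consequence of the two preceding results, and the write-up should be only a few lines. If one preferred an argument not relying on the explicit form of $G_{\hat g}$, an alternative is to check that $y\mapsto G_{\hat g}(\psi(x),\psi(y))+\tfrac14\ln\big(\norm{\psi'(y)}^2\hat g(\psi(y))/\hat g(y)\big)$ satisfies the two defining properties of $G_{\hat g}(x,\cdot)$ in Proposition \ref{Green_def}, using the conformal covariance $\mathcal{P}_{e^{2\varphi}\hat g}=e^{-d\varphi}\mathcal{P}_{\hat g}$ together with the change of variables $y\mapsto\psi(y)$ in the defining integral, and then invoke uniqueness; but the direct substitution above is the shorter route.
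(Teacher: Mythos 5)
Your argument is correct, and it reaches the identity by a genuinely shorter route than the paper. The paper does not invoke the closed-form expression $G_{\hat g}(x,y)=\ln\frac{1}{\norm{x-y}}-\frac14(\ln\hat g(x)+\ln\hat g(y))+C_{\hat g}$ at the transformed points; instead it reduces to the four generators of the M\"obius group and, for dilations, rotations and inversions, re-runs the $\mathcal Q$-curvature argument to show that $m_{\hat g}\bigl(\ln\frac{1}{\norm{\psi(x)-\cdot}}\bigr)=\frac14\ln\hat g(\psi(x))$, using the conformal transformation rule for the $\mathcal Q$-curvature of the pulled-back metric $\norm{\psi'}^2\hat g\circ\psi$ together with Lemma \ref{radial_harmonic} to kill the residual radial polyharmonic function. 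Your substitution into the already-established closed form, combined with the scaling identity $\norm{\psi(x)-\psi(y)}=\norm{\psi'(x)}^{1/2}\norm{\psi'(y)}^{1/2}\norm{x-y}$, collapses all of that into a two-line bookkeeping computation: the $\ln\frac{1}{\norm{x-y}}$ terms and the constant $C_{\hat g}$ cancel, and the remaining logarithms regroup exactly into the claimed right-hand side. Both proofs ultimately rest on the same ingredient (the radial Liouville-type lemma, which the paper uses once to prove the closed form you cite and then again inside this proof), so what your approach buys is the elimination of the case analysis and of the duplicated PDE argument; what the paper's approach buys is a verification that works directly from the general definition of $G_{\hat g}$ via the $m_{\hat g}$ terms, which is the pattern it reuses later (e.g.\ in the proof of Theorem \ref{KPZ}, where $m_{\hat g_\psi}(X_{\hat g})$ is identified as a Girsanov shift). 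Your alternative suggestion of verifying the defining properties of Proposition \ref{Green_def} and invoking uniqueness is also viable but, as you note, unnecessary here.
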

\begin{proof}
It suffices to prove the result for the four types of basic M\"obius transforms.

The proof is obvious if $\psi$ is a translation. \\
Let us now assume that $\psi$ is either a dilation, a rotation or an inversion. In that case the function
\[
m_{\hat{g}}(\frac{1}{\ln\norm{\psi(x)-\cdot}})=\frac{1}{\norm{\mathbb{S}^d}}\int_{\mathbb{R}^d}\left( \ln\frac{1}{\norm{x-y}}-\frac{1}{2}\ln\norm{\psi'(x)}-\frac{1}{2}\ln\norm{\psi'(y)}\right)\norm{\psi'(y)}^d\hat{g}^{\frac{d}{2}}(\psi(y))\mathrm{d}\lambda(y)
\] is radial, and we have the property that 
\[
(-\Delta)^{\frac d2}\left(m_{\hat{g}}(\frac{1}{\ln\norm{\psi(x)-\cdot}})+\frac{1}{2}\ln\norm{\psi'(x)}\right)=\frac12(d-1)!\norm{\psi'(x)}^d\hat{g}^{\frac{d}{2}}(\psi(x)).
\]
Now since the metric given by $\hat{g}_{\psi}=\norm{\psi'}\hat{g}\circ\psi$ is nothing but the pull-back measure of $\hat{g}$ by the M\"obius transform $\psi$, the $\mathcal{Q}$-curvature of this metric is the same as the one of $\hat{g}$, that is $(d-1)!$. As a consequence $(d-1)!(\norm{\psi'}^2\hat{g}\circ\psi)(x)^{\frac d2}=(-\Delta)^{\frac d2}\frac12\ln \norm{\psi'}^2\hat{g}\circ\psi(x)$, whence
\[
m_{\hat{g}}(\frac{1}{\ln\norm{\psi(x)-\cdot}})=-\frac{1}{2}\ln\norm{\psi'(x)}+\frac{1}{4}\ln \norm{\psi'(x)}^2\hat{g}(\psi(x))
+F(x)=\frac{1}{4}\ln\hat{g}(\psi(x))+F(x)\]
where $F$ is radial, smooth and satisfies $\Delta^{\frac d2}F=0$. Since it vanishes for $\norm{x}\rightarrow+\infty$ we can conclude by Lemma~\ref{radial_harmonic} that $F=0$.
\end{proof}
We also state a counterpart result for Lemma~\ref{Mobius_Green}:
\begin{lemma}\label{lemma:mobius_GFF}
In the setting of Lemma~\ref{Mobius_Green}, set $\hat g_\psi(x)\coloneqq \norm{\psi'(x)}^2\hat g(\psi(x))$. Then
\begin{equation}
G_{\hat g}(\psi(x),\psi(y))=G_{\hat g}(x,y)-m_{\hat g_\psi}(G_{\hat g}(x,\cdot))-m_{\hat g_\psi}(G_{\hat g}(\cdot,y))+C_{\hat g,\psi}
\end{equation}
for some constant $C_{\hat g,\psi}$.
\end{lemma}
\begin{proof}
By conformal invariance of the GJMS operator it is readily seen that \begin{equation}
G_{\hat g}(\psi(x),\psi(y))=G_{\hat g_\psi}(x,y).
\end{equation}
Now thanks to Proposition~\ref{Green_def} we can write that 
\[
G_{\hat g_\psi}(x,y)=G_{\hat g}(x,y)-m_{\hat g_\psi}(G_{\hat g}(x,\cdot))-m_{\hat g_\psi}(G_{\hat g}(\cdot,y))+C_{\hat g,\psi}.
\]
\end{proof}
\subsection{Classical LCFT on the sphere and Uniformization}\label{classical_Liouville}
In this last subsection, we are interested in studying the classical LCFT on the $d$-dimensional sphere. In other words we will consider the Uniformization problem on the sphere by investigating the existence of a conformal metric which has constant negative $\mathcal{Q}$-curvature.

\subsubsection{The Classical Liouville action functional}
As explained above, the starting point of the classical LCFT is to find a metric conformally equivalent to $g$ and which has constant negative curvature $-\Lambda$. Since a conformal metric can be put under the form $e^{2\phi}g$, by Equation~\eqref{eq:cov_GJMS} the latter is tantamount to saying that the conformal factor $\phi$ is a solution of a higher-dimensional Laplace equation:
\begin{equation}\label{eq:Laplace}
    \mathcal{P}_g\phi+\mathcal{Q}_g=-\Lambda e^{d\phi}.
\end{equation}
The variational formulation of this problem just corresponds to saying that $\frac 1b\phi$ is a minimiser of the Liouville action functional~\eqref{Liouville_action}
\[
 S_L(X,g)=\frac{d}{4\gamma_d}\int_{\mathbb{S}^d}\left(X\mathcal{P}_gX+2Q\mathcal{Q}_gX+\Lambda b^{-2} e^{dbX}\right)\mathrm{d}\lambda_g,
 \]
  where in the classical theory the parameter $Q$ is chosen to be equal to $\frac1b$. By doing so the action is classically Weyl invariant, in the sense that 
 \[
 S_L(X-Q\sigma,e^{2\sigma}g)-S_L(X,g)
 \]
does not depend on $X$. Put differently, the metric $e^{2bX}g$ that minimizes the above functional is actually independent of the background metric in a given conformal class. Note that here we have introduced a positive parameter $b$ instead of the standard action; the reason why we do so will be made clear later when we will consider the quantum theory associated to this classical one.
 
The existence of such metrics is not known in general; however if we do not consider anymore the assumption on the sign of the curvature an answer to this Uniformization problem exists for a large class of Riemannian manifolds  (\cite[Theorem 1.1]{ND07}):
\begin{theorem}\label{classical_curvature}
Let $(\mathcal M,g)$ be a compact $d$-dimensional (with $d$ even) Riemannian manifold and assume that:\begin{itemize}
    \item The kernel of the GJMS operator $\mathcal{P}_g$ is made of constant functions.
    \item The conformal invariant $\kappa_{\mathcal M}:=\int_{\mathcal M}\mathcal{Q}_g\mathrm{d}\lambda_g$ is not an integer multiple of $\kappa_{\mathbb{S}^d}=(d-1)!\norm{\mathbb{S}^d}$.
\end{itemize} 
Then $(\mathcal M,g)$ admits a conformal metric with constant $\mathcal{Q}$-curvature.
\end{theorem}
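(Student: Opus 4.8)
The plan is to solve the prescribed $\mathcal{Q}$-curvature equation by a variational argument. Writing a conformal metric as $g_u:=e^{2u}g$, the transformation rule for the critical GJMS operator shows that $g_u$ has constant $\mathcal{Q}$-curvature exactly when $u\in H^{\frac d2}(M,g)$ satisfies $\mathcal{P}_g u+\mathcal{Q}_g=\bar Q\, e^{du}$; integrating this against $d\lambda_g$ and using that $\mathcal{P}_g$ is self-adjoint with $\ker\mathcal{P}_g=\{\text{constants}\}$ forces $\bar Q=\kappa_M/\lambda_{g_u}(M)$, so $\bar Q$ has the sign of $\kappa_M$. I would obtain such $u$ as critical points of
\[
\mathcal{F}(u)=\frac{d}{4\gamma_d}\int_M u\mathcal{P}_g u\,d\lambda_g+\frac{d}{2\gamma_d}\int_M\mathcal{Q}_g u\,d\lambda_g-\frac{\kappa_M}{2\gamma_d}\ln\!\left(\frac1{\lambda_g(M)}\int_M e^{du}\,d\lambda_g\right)
\]
on $H^{\frac d2}(M,g)$, which is, up to an additive constant and the volume constraint, the classical action \ref{Liouville_action}. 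One checks directly that $\mathcal{F}(u+c)=\mathcal{F}(u)$ for every constant $c$, so it suffices to look for critical points in $\bar H:=\{u\in H^{\frac d2}(M,g):\int_M u\,d\lambda_g=0\}$; on $\bar H$ the hypothesis $\ker\mathcal{P}_g=\{\text{constants}\}$ is what guarantees that the quadratic form $u\mapsto\int_M u\mathcal{P}_g u\,d\lambda_g$ is non-degenerate, with at most finitely many negative directions.

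The analytic engine is the higher-dimensional Adams--Moser--Trudinger (Beckner--Fontana) inequality for the critical GJMS operator: there is $C=C(M,g)$ such that for all $u\in\bar H$,
\[
\ln\!\left(\frac1{\lambda_g(M)}\int_M e^{du}\,d\lambda_g\right)\le\frac{d}{2\kappa_{\mathbb{S}^d}}\int_M u\mathcal{P}_g u\,d\lambda_g+C,
\]
with \emph{sharp} leading constant $\kappa_{\mathbb{S}^d}$. Substituting this into $\mathcal{F}$ gives
\[
\mathcal{F}(u)\ge\frac{d}{4\gamma_d}\Big(1-\frac{\kappa_M}{\kappa_{\mathbb{S}^d}}\Big)\int_M u\mathcal{P}_g u\,d\lambda_g+\frac{d}{2\gamma_d}\int_M\mathcal{Q}_g u\,d\lambda_g-C',
\]
and the linear term is absorbed by Cauchy--Schwarz together with the Poincaré-type inequality on $\bar H$. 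Hence, in the favourable situation where $\mathcal{P}_g\ge0$ (as holds on $\mathbb{S}^d$ and a large class of manifolds) and $\kappa_M<\kappa_{\mathbb{S}^d}$ --- in particular whenever $\kappa_M\le0$, where the logarithmic term already has the good sign by Jensen --- the functional $\mathcal{F}$ is coercive on $\bar H$. I would then conclude by the direct method: the quadratic term is weakly lower semicontinuous, the linear term weakly continuous, and the exponential term weakly continuous, using the compactness of $H^{\frac d2}\hookrightarrow L^p$ and the uniform integrability of $e^{du_n}$ provided by the Adams inequality; a minimiser exists, and elliptic bootstrap in the equation $\mathcal{P}_g u=\bar Q e^{du}-\mathcal{Q}_g$ (with $e^{du}\in L^p$ for all $p$) makes it smooth, so $g_u$ is the desired metric.

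The genuinely hard case --- and where the hypothesis that $\kappa_M$ is \emph{not} an integer multiple of $\kappa_{\mathbb{S}^d}$ is used --- is $\kappa_M>\kappa_{\mathbb{S}^d}$ (and, more generally, any situation in which $\mathcal{P}_g$ has negative directions), for which $\mathcal{F}$ is unbounded below and the direct method collapses. Here I would follow the min-max/topological strategy of Djadli--Malchiodi and its even-dimensional extension in \cite{ND07}. With $k$ determined by $\lfloor\kappa_M/\kappa_{\mathbb{S}^d}\rfloor$ together with the number of negative directions of $\mathcal{P}_g$, a concentration--compactness analysis of sequences along which $\mathcal{F}\to-\infty$ shows that, in the absence of a critical point, the very low sublevel sets $\{\mathcal{F}\le -L\}$ deformation-retract onto a join of the unit sphere of the negative subspace with the space $M_k$ of formal barycenters of $M$ of order $k$ (probability measures supported on at most $k$ points): a minimising sequence must bubble off exactly $k$ quanta of curvature $\kappa_{\mathbb{S}^d}$, and it is precisely the non-integrality $\kappa_M\notin\kappa_{\mathbb{S}^d}\mathbb{Z}$ that prevents this quantization from degenerating. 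Since that join is non-contractible (one exploits that its $\mathbb{Z}_2$-homology is non-trivial), whereas low sublevels can also be contracted through the negative directions, one reaches a contradiction unless $\mathcal{F}$ admits a critical point; equivalently one runs a min-max over maps extending a non-trivial homology class, handling the possible failure of the Palais--Smale condition by a Struwe-type monotonicity trick in an auxiliary parameter, and obtains a critical point at a positive min-max level, again smoothed by elliptic regularity. The main obstacle is exactly this non-coercive regime: both the bubbling analysis that pins down the homotopy type of the sublevel sets and the restoration of compactness in the min-max are delicate, and they rely crucially on the sharp value $\kappa_{\mathbb{S}^d}$ of the Adams constant and on the arithmetic condition $\kappa_M\notin\kappa_{\mathbb{S}^d}\mathbb{Z}$.
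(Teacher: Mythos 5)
The paper does not actually prove this statement: it is imported verbatim from \cite[Theorem~1.1]{ND07} (the even-dimensional extension of the Djadli--Malchiodi existence result), so there is no internal proof to compare yours against. What can be said is that your outline faithfully reproduces the strategy of that cited proof: the functional $\mathcal F$ you write down is the Djadli--Malchiodi functional, the sharp Beckner--Fontana/Adams constant $\kappa_{\mathbb{S}^d}$ is exactly what makes $\mathcal F$ coercive on the mean-zero subspace when $\mathcal P_g\ge 0$ and $\kappa_M<\kappa_{\mathbb{S}^d}$ (with the $\kappa_M\le 0$ case handled by Jensen, as you say), and the non-coercive regime is indeed treated in \cite{ND07} by the min-max scheme you describe: identification of the homotopy type of very low sublevels with the join of $S^{\bar k-1}$ (negative directions of $\mathcal P_g$) and the barycenter space $M_k$ with $k=\lfloor\kappa_M/\kappa_{\mathbb{S}^d}\rfloor$, non-contractibility of that join, and Struwe's monotonicity trick to recover compactness, with the hypothesis $\kappa_M\notin\kappa_{\mathbb{S}^d}\mathbb Z$ entering precisely to rule out quantized loss of compactness. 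Your Euler--Lagrange computation and the identification $\bar Q=\kappa_M/\lambda_{g_u}(M)$ are correct.

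That said, be clear about what is and is not established here. As written, the second half of your argument is a map of the proof rather than a proof: the improved Moser--Trudinger inequalities near concentration points, the construction of the global projection from low sublevels onto the barycenter join, the deformation lemma, and the compactness analysis under the monotonicity trick are each substantial and are exactly where the non-integrality hypothesis and the sharpness of $\kappa_{\mathbb{S}^d}$ do their work. One smaller point worth tightening in the coercive case: weak lower semicontinuity of $u\mapsto\int_M u\mathcal P_g u\,d\lambda_g$ is only automatic once you have split off the (finite-dimensional) negative eigenspace, so the ``direct method'' paragraph really only applies when $\mathcal P_g\ge 0$, which you do acknowledge but should state as a standing assumption of that case.
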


The particular case of the higher-dimensional sphere is also well understood since we then know explicitly the answers to this problem (\cite[Theorem 1.1]{YCY97}):
\begin{theorem}\label{classical_sphere}
Assume that $g$ is a metric on the even-dimensional sphere $\mathbb{S}^d$, conformally equivalent to $g_0$. If the $\mathcal{Q}$-curvature of $g$ is constant then necessarily it is of the form
$g=\frac{\text{Vol}_g(\mathbb{S}^d)}{\norm{\mathbb{S}^d}}\psi^*g_0$ for some M\"obius transform $\psi$ of the sphere.
\end{theorem}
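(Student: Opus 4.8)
The plan is to recast the statement as the classification of solutions of a higher-order Liouville-type equation on $\mathbb{S}^d$, to remove the conformal invariance of the problem by a normalisation along the Möbius orbit, and then to conclude using the equality case of the sharp Moser--Trudinger--Beckner inequality on the sphere. First I would write $g=e^{2u}g_{0}$ with $u$ smooth (elliptic regularity for the critical GJMS operator $\mathcal{P}_{0}$ on $\mathbb{S}^d$ upgrades an a priori $H^{d/2}$ conformal factor to a smooth one). The conformal change formula $\mathcal{Q}_{e^{2u}g_{0}}e^{du}=\mathcal{P}_{0}u+\mathcal{Q}_{0}$ together with $\mathcal{Q}_{0}=(d-1)!$ shows that ``$\mathcal{Q}_{g}\equiv c$'' is equivalent to $\mathcal{P}_{0}u+(d-1)!=c\,e^{du}$. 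Integrating against $d\lambda_{g_{0}}$ and using $\ker\mathcal{P}_{0}=\{\text{constants}\}$ gives $c\,\text{Vol}_{g}(\mathbb{S}^d)=(d-1)!\,\norm{\mathbb{S}^d}$; in particular $c>0$, so a metric conformal to $g_{0}$ can never have constant negative $\mathcal{Q}$-curvature. Since scaling $g$ by a positive constant only affects the prefactor in the conclusion, I rescale so that $\text{Vol}_{g}(\mathbb{S}^d)=\norm{\mathbb{S}^d}$, i.e.\ $\int_{\mathbb{S}^d}e^{du}\,d\lambda_{g_{0}}=\norm{\mathbb{S}^d}$ and $\mathcal{P}_{0}u+(d-1)!=(d-1)!\,e^{du}$.

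Such a $u$ is a critical point of the conformally (scale-)invariant functional
\[
E(v)=\tfrac12\int_{\mathbb{S}^d}v\,\mathcal{P}_{0}v\,d\lambda_{g_{0}}+(d-1)!\int_{\mathbb{S}^d}v\,d\lambda_{g_{0}}-\tfrac{(d-1)!}{d}\norm{\mathbb{S}^d}\log\!\left(\tfrac{1}{\norm{\mathbb{S}^d}}\int_{\mathbb{S}^d}e^{dv}\,d\lambda_{g_{0}}\right),
\]
and Beckner's sharp inequality on $\mathbb{S}^d$ is, up to normalisation, precisely the statement that $E\ge E(0)=0$, with equality if and only if $e^{2v}g_{0}=\psi^{*}g_{0}$ for some Möbius transform $\psi$ (equivalently $v=\tfrac12\ln\norm{\psi'}$). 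Next I would kill the conformal invariance: moving $u$ along its Möbius orbit changes neither the equation, the volume, nor $E(u)$, and the orbit is parametrised, modulo rotations, by the open ball $B^{d+1}$. By the standard conformal balancing argument (Hersch's trick: the barycentre map $B^{d+1}\to\overline{B^{d+1}}$ built from the positive density $e^{du}$ extends continuously to the boundary with non-zero topological degree, hence is onto), I may assume $\int_{\mathbb{S}^d}\omega_{j}\,e^{du}\,d\lambda_{g_{0}}=0$ for $j=1,\dots,d+1$, where $\omega\colon\mathbb{S}^d\hookrightarrow\mathbb{R}^{d+1}$ is the standard embedding; pairing the equation with the $\omega_{j}$ (which are eigenfunctions of $\mathcal{P}_{0}$) then also gives $\int_{\mathbb{S}^d}\omega_{j}\,u\,d\lambda_{g_{0}}=0$.

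Now the only Möbius transform for which the measure $e^{du}\,d\lambda_{g_{0}}$ has barycentre $0$ is a rotation, for which $u\equiv0$. Hence it is enough to prove that the balanced solution $u$ is a minimiser of $E$, i.e.\ attains equality in Beckner's inequality. Granting this, $u\equiv0$, and undoing the rescaling and the normalising Möbius map yields $g=\tfrac{\text{Vol}_{g}(\mathbb{S}^d)}{\norm{\mathbb{S}^d}}\,\psi^{*}g_{0}$, which is the claim.

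The hard part is this last step: upgrading ``critical point'' to ``global minimiser''. Two routes seem viable. (i) A refined, balanced Beckner inequality: once the first spherical harmonics have been removed, one expects $E(v)\ge\varepsilon\,\norm{\norm{v}}_{H^{\frac d2}_{g_0}}^{2}$ for $v$ near $0$, which gives local uniqueness and is then globalised by a continuity/degree argument in a parameter (or by a finer analysis of the Moser--Trudinger functional at the minimal mass $\norm{\mathbb{S}^d}$). (ii) Stereographic projection turns the equation into $(-\Delta)^{d/2}v=(d-1)!\,e^{dv}$ on $\mathbb{R}^{d}$ with $\int_{\mathbb{R}^d}e^{dv}<\infty$ equal to the minimal mass, and one invokes the classification of such ``spherical'' solutions --- the standard bubbles obtained from one explicit radial profile by dilations and translations, which are exactly the Möbius conformal factors. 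Either way the analytic input is substantial. It is also worth stressing why the elementary route fails: the Obata-type integral identity available for constant scalar curvature in a conformal class does not transpose, because the $\mathcal{Q}$-curvature does not control the full curvature tensor; this is precisely why the statement is non-trivial and why one must appeal to \cite{YCY97}.
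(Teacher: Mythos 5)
The paper does not actually prove this statement: it is quoted verbatim from \cite[Theorem 1.1]{YCY97} and used as a black box, so there is no internal proof to compare against. Your outline is a faithful reconstruction of the strategy behind that reference (and of Chang--Yang's argument in particular): reduce to the Liouville-type equation $\mathcal{P}_0u+(d-1)!=c\,e^{du}$, observe that integration forces $c>0$ and fixes $c\,\mathrm{Vol}_g(\mathbb{S}^d)=(d-1)!\norm{\mathbb{S}^d}$, normalise the volume, balance the barycentre of $e^{du}d\lambda_{g_0}$ along the M\"obius orbit by Hersch's degree argument, and conclude via the equality case of Beckner's inequality. The book-keeping you do around this skeleton (the pairing with the first spherical harmonics, the observation that a barycentre-zero conformal factor of M\"obius type must come from a rotation, the remark that no Obata-type identity is available) is all correct.

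However, as a proof the proposal has a genuine and decisive gap, which you yourself flag: nothing is actually shown at the step ``a balanced \emph{critical point} of $E$ is a \emph{global minimiser}'' (equivalently, after stereographic projection, the classification of entire solutions of $(-\Delta)^{d/2}v=(d-1)!e^{dv}$ on $\mathbb{R}^d$ with the minimal total mass). Both routes you sketch --- a quantitative improved Moser--Trudinger inequality on the balanced slice, or the classification of spherical solutions on $\mathbb{R}^d$ --- are precisely the analytic content of \cite{YCY97} and of the subsequent Lin/Wei--Xu classification results; neither is elementary, and neither is supplied. The local coercivity estimate $E(v)\gtrsim\norm{\norm{v}}_{H^{\frac d2}_{g_0}}^2$ near $0$ on the balanced slice, even if granted, only yields \emph{local} uniqueness, and the globalisation step (ruling out nontrivial balanced critical points far from $0$) is exactly where the difficulty lies. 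So the submission should be read as a correct road map that terminates in a citation of the very theorem it sets out to prove, not as a self-contained argument; within the logic of the paper that is acceptable (the paper itself only cites the result), but it does not constitute an independent proof.
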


\subsubsection{The Liouville functional with conical singularities}

Coming back to the case of the sphere, we see that there is an obvious obstruction for the existence of a metric with constant \textit{negative} $\mathcal{Q}$-curvature: since the total curvature is a conformal invariant, taking the integral over the sphere of the Laplace equation~\eqref{eq:Laplace} yields the equality $2\gamma_d=-\Lambda \text{Vol}_{e^{2\phi}g}(\mathbb{S}^d)$, which is impossible if $\Lambda$ is positive. A natural way to overcome this obstruction is to extend the range of definition of the conformal factor by allowing it to have logarithmic singularities, that is by allowing the fields to behave like
$\phi\sim 2\chi_k\ln\frac{1}{\norm{x-x_k}}$ near to the marked points $(x_k,\chi_k)\in\mathbb{S}^d\times\mathbb{R}$. The geometric interpretation of such singularities is a conical singularity on the underlying manifold at the point $x_k$ whose angle is prescribed by the weight $\chi_k$. 

This operation amounts to adding to the action \textit{conical singularities}, which is achieved by considering (a regularization of)
\[
S^{(\bm x,\bm{\chi})}_L(X,g):=S_L(X,g)-d\sum_{k=1}^N\chi_k X(x_k)
\]
where $\bm x:=(x_1,...,x_N)$ are points in $\mathbb{S}^d$ and $\bm{\chi}:=(\chi_1,...,\chi_N)$ are (real) weights. This operation is the classical analog of inserting Vertex Operators in the partition function. By doing so, the field equations are transformed in the following way:
\begin{equation}\label{eq:Liouville_equ_conical}
\mathcal{P}_g \phi +\mathcal{Q}_g + \Lambda e^{d\phi}=2\gamma_d \sum_{k=1}^N \chi_k\delta_g(x-x_k).
\end{equation}
Solutions of this equation correspond conformal metrics $e^{2\phi}g$ with constant negative $\mathcal{Q}$-curvature $-\Lambda$ and conical singularities prescribed by the $(\bm x,\bm{\chi})$. Their existence is subject to at least two conditions:
\begin{itemize}
    \item Integrability of the volume form near the singularities, which implies that $\chi_k< \frac{1}{2}$.
    \item Integrating Equation~\eqref{eq:Liouville_equ_conical} implies that $\Lambda\int e^{d\phi}\mathrm{d}\lambda_g=\sum_k\chi_k-\frac{1}{2\gamma_d}\int Q_{g}\mathrm{d}\lambda_g$. In particular for the sphere $\mathbb{S}^d$ we get the relation $\Lambda\int e^{d\phi}\mathrm{d}\lambda_g=\sum_k\chi_k-1$. If we choose $\Lambda$ to be positive this implies that $\sum_k\chi_k>1$.
\end{itemize}
In particular for the sphere to have a conformal metric with constant negative $\mathcal{Q}$-curvature one needs to prescribe at least three conical singularities. The bounds that appear are the classical analogues of the \textit{Seiberg bounds}~\cite{Sie90} that occur in quantum field theory.

In the higher-dimensional theory, we are able to provide a result of existence and uniqueness for this problem (much weaker than the two-dimensional one by Troyanov in \cite{T91} but with an elementary proof) in the very special case
of the $d$-dimensional sphere. This can be stated in the following way:
\begin{theorem}\label{classical_conical}
Let $N$ be any integer $N$ and $\Lambda$ be a positive real number. Let $x_1,\cdots,x_N$ be distinct on $\mathbb{S}^d$, and assume that $\chi_1,\cdots,\chi_N$ are real numbers such that 
\begin{equation}
    \forall k, \chi_k<\frac{1}{2}\quad\text{and}\quad \sum_{k}\chi_k>1.
\end{equation}
Then there exists a unique metric $g=e^{2\phi_0}g_0$ on the sphere $(\mathbb{S}^d,g_0)$ such that:
\begin{itemize}
    \item $g$ has conical singularities of weight $\chi_k$ at the point $x_k$ for any $k$.
    \item $g$ has constant negative curvature $-2\gamma_d\Lambda$.
    \item $\phi_0-2\sum_{k=1}^N\chi_k\ln\frac{1}{\norm{x-x_k}}$ is in the Sobolev space $H^{\frac{d}{2}}(\mathbb{S}^d,g_0)$.
\end{itemize}
Put differently, there exists a unique $h_0\in H^{\frac d2}(\mathbb{S}^d,g_0)$ such that $\phi_0\coloneqq h_0+2\sum_k\chi_{k=1}^N\ln\frac{1}{\norm{x-x_k}}$ is a variational solution of the problem 
\[
\mathcal{P}_0 \phi_0 +(d-1)! + 2\gamma_d\Lambda e^{d\phi_0}=2\gamma_d \sum_{k=1}^N \chi_k\delta(x-x_k).
\] 
Moreover $\phi_0$ is smooth outside of its singular points. 
\end{theorem}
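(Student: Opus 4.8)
The plan is to run the direct method of the calculus of variations on the energy underlying the equation, after first trading the Dirac masses for a fixed singular background. First I would set $\phi_0:=2\sum_k\chi_k G_{g_0}(\cdot,x_k)$; by Proposition~\ref{Green_def} it has vanishing $\lambda_{g_0}$-mean and, weakly, $\mathcal{P}_0\phi_0=2\gamma_d\sum_k\chi_k\bigl(\delta_{x_k}-\norm{\mathbb{S}^d}^{-1}\bigr)$. Seeking the unknown in the form $u=\phi_0+h$ with $h\in H^{\frac d2}(\mathbb{S}^d,g_0)$, the singular parts cancel exactly and the equation becomes
\[
\mathcal{P}_0 h+C_{\bm\chi}+2\gamma_d\Lambda\,\nu\,e^{dh}=0,\qquad \nu:=e^{d\phi_0},
\]
where $C_{\bm\chi}$ is the constant produced by expanding $\mathcal{P}_0\phi_0$ together with $\mathcal{Q}_{g_0}=(d-1)!$; integrating this identity recovers the Gauss--Bonnet-type obstruction recalled before the statement, and in particular $C_{\bm\chi}<0$ \emph{precisely} under the hypothesis $\sum_k\chi_k>1$. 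Two further points are built in: since $\nu(x)\asymp\norm{x-x_k}^{-2d\chi_k}$ near each $x_k$ and $\nu$ is bounded near $\infty$, the hypothesis $\chi_k<\tfrac12$ is exactly what makes $\nu\in L^p(\mathbb{S}^d,g_0)$ for some $p>1$; and the membership $u-2\sum_k\chi_k\ln\frac1{\norm{x-x_k}}\in H^{\frac d2}$ will be automatic once $h\in H^{\frac d2}$, because $G_{g_0}(\cdot,x_k)-\ln\frac1{\norm{\cdot-x_k}}$ is smooth.

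Next I would recognise the reduced equation as the Euler--Lagrange equation of
\[
F(h):=\frac1{2\gamma_d}\int_{\mathbb{S}^d}h\,\mathcal{P}_0 h\,d\lambda_{g_0}+\frac{C_{\bm\chi}}{\gamma_d}\int_{\mathbb{S}^d}h\,d\lambda_{g_0}+\frac{2\Lambda}{d}\int_{\mathbb{S}^d}\nu\,e^{dh}\,d\lambda_{g_0}
\]
on $H^{\frac d2}(\mathbb{S}^d,g_0)$. Finiteness of $F$ and G\^ateaux-differentiability of its exponential term rely on a (higher-order) Moser--Trudinger/Adams inequality --- $e^{\alpha h}\in L^q$ for all $\alpha$ and all finite $q$ when $h\in H^{\frac d2}$ --- combined with H\"older's inequality and $\nu\in L^p$, $p>1$. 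The functional is convex; it is in fact \emph{strictly} convex, for although the quadratic form $\int h\mathcal{P}_0 h$ degenerates in the constant directions (recall $\ker\mathcal{P}_0=\{\text{constants}\}$ on the sphere), the map $t\mapsto e^{dt}$ is strictly convex and $\nu\not\equiv0$, so the exponential term restores strict convexity there. Hence $F$ has at most one critical point, which yields at once the uniqueness of $h$, of $u$, and of the metric $g=e^{2u}g_0$.

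The substantive point, and where I expect the real difficulty, is coercivity of $F$ on $H^{\frac d2}(\mathbb{S}^d,g_0)$. I would decompose $h=\bar h+h^\perp$ into its $\lambda_{g_0}$-mean and its mean-zero part. Positivity of $\mathcal{P}_0$ with kernel the constants gives $\int h\mathcal{P}_0 h=\int h^\perp\mathcal{P}_0 h^\perp\gtrsim\|h^\perp\|_{H^{\frac d2}}^2$; the linear term is a \emph{negative} multiple of $\bar h$ (this is the place where $\sum_k\chi_k>1$ enters); and Jensen's inequality for the probability measure $\nu\,d\lambda_{g_0}/\nu(\mathbb{S}^d)$ together with H\"older and Sobolev embedding give $\int\nu e^{dh}\ge\nu(\mathbb{S}^d)\,e^{d\bar h}\,e^{-c\|h^\perp\|_{H^{\frac d2}}}$. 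A short case analysis in $(\|h^\perp\|_{H^{\frac d2}},\bar h)$ then forces $F\to+\infty$: the linear term dominates when $\bar h\to-\infty$, the exponential term dominates the linear one when $\bar h\to+\infty$, and the quadratic term dominates when $\|h^\perp\|_{H^{\frac d2}}\to\infty$. For the direct method I also need weak lower semicontinuity: this is immediate for the quadratic part (a squared seminorm) and the linear part, and for the exponential part it follows from Fatou's lemma after extracting, via the compact embedding $H^{\frac d2}(\mathbb{S}^d)\hookrightarrow L^q$ ($q<\infty$), an a.e.-convergent subsequence. Coercivity, weak lower semicontinuity and reflexivity of $H^{\frac d2}$ produce a minimiser $h^\star$, which by strict convexity is the unique critical point; then $u^\star:=\phi_0+h^\star$ is the desired variational solution.

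Finally, for smoothness off the marked points I would bootstrap. Since $\nu e^{dh^\star}\in L^p$ for some $p>1$, elliptic regularity for the order-$d$ operator $\mathcal{P}_0$ gives $h^\star\in W^{d,p}(\mathbb{S}^d)\hookrightarrow C^{0,\alpha}(\mathbb{S}^d)$; in particular $h^\star$ is bounded, so $u^\star=2\chi_k\ln\frac1{\norm{x-x_k}}+O(1)$ near $x_k$ --- the prescribed conical behaviour. Away from the $x_k$ the density $\nu$ is smooth, hence $e^{dh^\star}$ is as regular as $h^\star$, and a Schauder iteration upgrades $h^\star$, and with it $u^\star$, to $C^\infty$. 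To summarise, the single genuinely delicate step is the coercivity estimate, where the critical nature of $H^{\frac d2}$ forces an Adams-type inequality and where both Seiberg-type bounds $\sum_k\chi_k>1$ and $\chi_k<\tfrac12$ are used essentially; convexity (hence uniqueness), lower semicontinuity via Fatou, and the elliptic bootstrap are all routine.
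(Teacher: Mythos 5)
Your argument is correct, but it runs along a genuinely different track from the paper's on both halves of the statement. For existence, the paper does not minimise the ``honest'' functional $F$ with the term $\int_{\mathbb{S}^d}\nu e^{dh}d\lambda$; it minimises the scale-invariant functional $J(h)=\frac{d}{4\gamma_d}\int h\mathcal{P}_0h+2(d-1)!c_0\int h\,d\lambda-c_0\ln\int e^{d(h+\frac12\ln w)}d\lambda$, which is unchanged under $h\mapsto h+\mathrm{const}$. This lets the paper restrict at once to mean-zero minimising sequences, get boundedness in $H^{\frac d2}(\mathbb{S}^d,g_0)$ directly from the Moser--Trudinger inequality of Proposition \ref{Mos_Tru} plus Poincar\'e, and only at the very end rescale ($u_0=\overline{h}_0+\frac1d\ln\frac{c_0}{\Lambda}$) to hit the prescribed curvature $-2\gamma_d\Lambda$; your route instead pays for the non-invariance of $F$ with the two-parameter coercivity analysis in $(\bar h,\norm{\norm{h^\perp}})$, using Jensen for the lower bound on the exponential term and invoking Adams/Moser--Trudinger only for finiteness and differentiability. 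For uniqueness, the paper does not use convexity at all: it takes two solutions $h_1,h_2$ of the normalised equation, tests the difference of the equations against $h_1-h_2$, writes the nonlinear difference as $\int_0^1\frac{d}{dt}(\cdots)dt$ and identifies the integrand as a variance with respect to the probability measures $d\mu_t$, whose sign together with $\sum_k\chi_k>1$ forces $\int(h_1-h_2)\mathcal{P}_0(h_1-h_2)\leq0$, hence $h_1-h_2$ constant, the constant then being fixed by the volume identity $\Lambda\int e^{du}=\sum_k\chi_k-1$. Your strict-convexity argument (the exponential term being strictly convex along the constant directions where the quadratic form degenerates) delivers uniqueness in one stroke and is arguably cleaner; the paper's normalised-functional route makes the compactness step shorter but must handle uniqueness and the volume normalisation separately. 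Both proofs use the two hypotheses in the same essential places: $\chi_k<\frac12$ for integrability of the singular weight $w$ (equivalently your $\nu\in L^p$, $p>1$) and $\sum_k\chi_k>1$ for the sign of the zeroth-order term that drives coercivity (respectively the sign in the paper's variance identity). Your reduction via $\phi_0=2\sum_k\chi_kG_{g_0}(\cdot,x_k)$ versus the paper's $\frac12\ln w$, and the final elliptic bootstrap, are the same up to smooth corrections.
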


This result has a straightforward generalization to 
any $d$-dimensional compact Riemannian manifold without boundary which is simply connected and locally conformally flat.

The proof of this result relies on a variational approach (with a functional which is actually not the Liouville one) of the problem, which involves a Moser-Trudinger-type inequality which takes the following form:
\begin{prop}\label{Mos_Tru}
In the setting of Theorem~\ref{classical_conical}, assume only that the $\bm{\chi}$ are such that $\chi_k<\frac12$ for all $k$. Then there exist positive constants $c$ and $C$, depending on $\bm{\chi}$, such that for any $f\in H^{\frac d2}(\mathbb{S}^d,g_0)$:
\begin{equation}
    \ln \int_{\mathbb{S}^d}e^{f+2d\sum_k\chi_k\ln\frac{1}{\norm{x-x_k}}}\mathrm{d}\lambda_{g_0} \leq c+C\int_{\mathbb{S}^d}f\mathcal{P}_0f\mathrm{d}\lambda +\int_{\mathbb{S}^d}f\mathrm{d}\lambda_{g_0}.
\end{equation}
When $\chi_k=0$ for all $k$ one can take $c=0$ and $C=\frac{1}{2d!}$ (this corresponds to the main result in \cite{B93}).
\end{prop}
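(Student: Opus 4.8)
The plan is to reduce the statement to the sharp non-singular inequality on the sphere --- the case $\bm\chi=0$, which is exactly Beckner's theorem \cite{B93} --- by decoupling the regular field $h$ from the singular profile $v(x):=2d\sum_k\chi_k\ln\frac1{\norm{x-x_k}}$ via H\"older's inequality, the hypothesis $\chi_k<\frac12$ entering precisely as the integrability threshold for the weight $e^{v}$.

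First I would record the only consequence of Beckner's inequality that is needed: for every $h\in H^{\frac d2}(\mathbb{S}^d,g_0)$ and every $p>0$, applying the sharp inequality to the mean-zero function $p\big(h-\int_{\mathbb{S}^d}h\,d\lambda\big)$ (here $d\lambda$ is normalised, so that $\int_{\mathbb{S}^d}h\,d\lambda$ is the average of $h$) and using that $\mathcal{P}_0$ is self-adjoint and annihilates constants yields
\[
\ln\int_{\mathbb{S}^d}e^{ph}\,d\lambda\;\le\;p\int_{\mathbb{S}^d}h\,d\lambda\;+\;\frac{p^{2}}{2d!}\int_{\mathbb{S}^d}h\,\mathcal{P}_0h\,d\lambda .
\]
In particular $e^{h}\in L^{p}(\mathbb{S}^d)$ for all $p$, so the H\"older splitting below is legitimate.

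Next I would choose the exponents. Since $\chi_k<\frac12$ for every $k$, we have $\frac1{2\chi_k}>1$ whenever $\chi_k>0$, so one may fix $p'>1$ with $p'\chi_k<\frac12$ for all $k$; let $p=\frac{p'}{p'-1}$ be its conjugate. H\"older's inequality gives
\[
\int_{\mathbb{S}^d}e^{h+v}\,d\lambda\;\le\;\Big(\int_{\mathbb{S}^d}e^{ph}\,d\lambda\Big)^{1/p}\Big(\int_{\mathbb{S}^d}e^{p'v}\,d\lambda\Big)^{1/p'},
\]
and the second factor equals $\int_{\mathbb{S}^d}\prod_k\norm{x-x_k}^{-2dp'\chi_k}\,d\lambda(x)$. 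This is a finite constant: near each $x_k$ the integrand is $\lesssim\norm{x-x_k}^{-2dp'\chi_k}$ with $2dp'\chi_k<d$, hence integrable against the smooth bounded density of $\lambda$, while the rapid decay of the round volume form controls the behaviour at infinity. I would set $c$ equal to $\frac1{p'}$ times the logarithm of this constant.

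Taking logarithms and inserting the first-step bound on $\frac1p\ln\int_{\mathbb{S}^d}e^{ph}\,d\lambda$ then produces the assertion with $C=\frac p{2d!}$; and when all $\chi_k=0$ no H\"older step is needed, the first step applied with $p=1$ giving $c=0$ and $C=\frac1{2d!}$. I do not expect a genuine obstacle here: the analytic content is carried entirely by Beckner's sharp inequality, which we are free to quote. The two points deserving care are the passage from mean-zero test functions to a general $h$ (immediate once one notes $\mathcal{P}_0\mathbf{1}=0$) and the verification that $\chi_k<\frac12$ is exactly the threshold allowing an exponent $p'>1$ with $e^{p'v}$ integrable; one should also flag that, in contrast to $C$, the additive constant $c$ genuinely depends on the positions $\bm x$ and degenerates when two singular points coalesce, so the phrase ``depending on $\bm\chi$'' should be read with the configuration $\bm x$ regarded as fixed.
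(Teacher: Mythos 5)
Your proposal is correct and follows essentially the same route as the paper's own proof: a H\"older splitting with a conjugate exponent $p'>1$ chosen so that $p'\chi_k<\frac12$ makes the singular weight integrable, combined with Beckner's sharp inequality applied to the rescaled mean-zero part of $h$, yielding $C=\frac{p}{2d!}$ and a constant $c$ coming from the finite singular integral. The only cosmetic difference is that you absorb the reduction to mean-zero functions into the statement of Beckner's inequality at the outset, whereas the paper first proves the mean-zero case for smooth $f$ and then extends to general mean and to $H^{\frac d2}(\mathbb{S}^d,g_0)$ by density; both are sound.
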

This inequality is easily derived from the result in \cite[Theorem 1]{B93}; we prove these two statements in Appendix~\ref{classical_proofs}. We will also see below that when considering the quantization of the Liouville functional (which is the purpose of the next section), it is natural to expect that the semi-classical limit of the model (which corresponds to letting the \lq\lq quantum parameters" go to zero) coincides with the solution of the above problem. This is indeed the case in dimension two~\cite{LRV19} and the extension of this result will be discussed in Subsection~\ref{subsec:semi_classical}.

\section{Quantization of the action: Liouville Conformal Field Theory on the higher-dimensional sphere}

The purpose of this section is to provide a rigorous meaning to the quantization of the classical theory exposed in the previous section. This is done by giving a definition of the random field formally introduced by using a path integral approach (recall Equation~\eqref{Liouville_field}) thanks to a probabilistic framework. In this section we will work with the Liouville action whose expression is given by
\begin{equation}
     S_L(X,g)=\frac{d}{4\gamma_d}\int_{\mathbb{S}^d}\left(X\mathcal{P}_gX+2Q\mathcal{Q}_gX+\frac{4\gamma_d}{d}\mu e^{dbX}\right)\mathrm{d}\lambda_g
\end{equation}
which corresponds to the quantization of the classical action. In the above expression the coupling constant $b\in(0,1)$ corresponds to the \lq\lq level of randomness" considered (the deterministic theory corresponds to the limit $b\rightarrow0$ under suitable renormalization, usually referred to as the semi-classical limit). Here $Q=\frac1b+b$ is the background charge; note that it differs from its classical value by the parameter $b$ which account for the \lq\lq quantum corrections" that have to be added for the model to be well defined (see below). Eventually $\mu>0$ is the cosmological constant.

\subsection{Probabilistic background}
To start with, we present the probabilistic background that we will need in order to give a meaning to the Liouville action functional. 
\subsubsection{Log-correlated fields}
The first term we need to interpret is the measure element $D_gX$ that appears in the path integral. But instead of considering this measure element we will rather consider the Gaussian measure that is formally defined by 
\[
\exp\left(-\frac{d}{4\gamma_d}\int_{\mathbb{S}^d} X\mathcal{P}_gX\right)D_gX.
\]
The form of the measure element is indeed reminiscent of a Gaussian measure; it may be interpreted as a Gaussian measure on the collection of the $\left((X,\mathcal{P}_gf)\right)$ indexed by $f\in H^{\frac d2}(\mathbb{S}^d,g)$ under which 
\begin{equation}
    \frac{d}{2\gamma_d}\expect{(X,\mathcal{P}_gf)(X,\mathcal{P}_gh)}=(f,\mathcal{P}_gh)
\end{equation}
for $f$ and $h$ in $H^{\frac d2}(\mathbb{S}^d,g)$. Now if we formally think of $X$ as a function and exchange expectations and integrals, the latter can be rewritten as 
\[
    \frac{d}{2\gamma_d}\int_{\mathbb{S}^d}\int_{\mathbb{S}^d}\mathcal{P}_gf(x)\expect{X(x)X(y)}\mathcal{P}_gh(y)\mathrm{d}\lambda_g(x)\mathrm{d}\lambda_g(y)=\int_{\mathbb{S}^d}f(x)\mathcal{P}_gh(x)\mathrm{d}\lambda_g(x).
\]
In particular if we set $\expect{X(x)X(y)}=\frac2d G_{g}(x,y)$ we get the desired result. This leads us to the introduction of a log-correlated field as follows (where we view the sphere as $\mathbb{R}^d\cup\{\infty\}$).
\begin{defi}
Consider $g$ a metric on $\mathbb{R}^d\cup\{\infty\}$ conformally equivalent to $\hat g$. We define a log-correlated field $X_g$ as a centered Gaussian random distribution with covariance kernel given by $\frac2d G_{g}$ where $G_{g}$ is the Green's kernel of the GJMS operator defined by Proposition~\ref{Green_def}.
Put differently, for $x,y\in\mathbb{R}^d$:
\begin{equation}
    \expect{X_g(x)X_g(y)}=\frac2d G_{\psi^*g}(\psi (x),\psi (y)).
\end{equation}
\end{defi}

The existence of such a field is ensured by the non-negativity of its kernel (more details on its construction can be found in the review \cite{DRSV14}); it can be shown - for instance by adapting the reasoning conducted in \cite[Section 4.3]{Dub07} - that it is possible to work in a probability space on which the random field $X_g$ lives almost surely in the dual space $H^{-\frac d2}(\mathbb{S}^d,g)$ of $ H^{\frac d2}(\mathbb{S}^d,g)$, which we will always assume in the sequel.

From the definition of the covariance kernel as provided before, one observation is that the mean-value of the field is zero almost surely; also under a conformal change of metric $g'$ the two fields 
\[
X_{g'} \quad \text{and} \quad X_g-m_{g'}(X_g)
\]
have same law. This means that if we interpret the measure $\propto e^{-\frac{d}{4\gamma_d}(X,\mathcal{P}_gX)}D_gX$ as the probability measure of a zero-mean log-correlated fields we lose one degree of freedom for the field (which corresponds to the kernel of the GJMS operator). To address this issue, we will add a constant term to the field which will be chosen uniformly according to the Lebesgue measure on $\mathbb{R}$ (which should be understood as a Gaussian measure with infinite variance).

To summarise and by viewing the sphere as the compactified space $\mathbb{R}^d\cup\{\infty\}$, we may interpret the measure 
\[
\exp\left(-\frac{d}{4\gamma_d}\int_{\mathbb{S}^d} X\mathcal{P}_gX\right)D_gX
\]
as the image by $(X_g,c)\mapsto X_g+c$ of the tensor product $d\mathbb{P}_g(X)\otimes dc$, where $d\mathbb{P}_g(X)$ denotes the measure associated to the log-correlated field $X_g$ and $dc$ refers to the Lebesgue measure on $\mathbb{R}$. Note that the measure thus defined is an infinite measure.

\subsubsection{Gaussian Multiplicative Chaos}
The next step in interpreting the Liouville action~\eqref{Liouville_action} is to make sense of the term 
\[
\int_{\mathbb{S}^d} e^{dbX(x)} \mathrm{d}\lambda_g(x)
\] that appears in the expression of the Liouville functional.
Indeed, the lack of regularity of the log-correlated field $X$ prevents us from providing a rigorous meaning to the term $e^{dbX(x)}$ viewed as a well-defined function. Nonetheless, the theory of Gaussian Multiplicative Chaos (GMC), first introduced in the seminal work~\cite{K85} by Kahane, explains that one can make sense of $e^{dbX(x)} \mathrm{d}\lambda_g(x)$ as a \textit{random measure}. This is done thanks to a regularization procedure, by considering a smooth approximation of the field $X$ and taking an appropriate scaling limit. Additional details on this construction can be found \textit{e.g.} in~\cite{Ber17} or~\cite{RV16} and the references therein.

For convenience, we will consider here the realization of the sphere as the space $\mathbb{R}^d\cup\lbrace\infty\rbrace$ equipped with the round metric $\hat g$ (this corresponds to considering $X_g$ with covariance kernel $\frac 2dG_g$ as in Proposition~\ref{Green_def}). We consider the regularization of the field given by its average on $d-1$-dimensional spheres: for positive $\varepsilon$ we set
\begin{equation}\label{eq:average}
    X_{g,\varepsilon}:=\frac{1}{\norm{\mathbb{S}^{d-1}}}\int_{\mathbb{S}^{d-1}} X_{g}(x+\varepsilon y)\mathrm{d}\lambda_{\partial}(y)
\end{equation}
where $\mathrm{d}\lambda_\partial$ is the Lebesgue measure on $\mathbb{S}^{d-1}\subset\mathbb R^d$.

As can be checked from the definition of $X_g$, the variance of the Gaussian centered random variable $X_{g,\varepsilon}(x)^2$ is of order $\frac{2}{d}\ln\frac{1}{\varepsilon}$, so we have that $\expect{e^{db X_{g,\varepsilon}(x)^2}}$ is of order $\varepsilon^{-db^2}$. This should at least motivate the following statement (which is standard in the theory of GMC, see for instance~\cite[Theorem 2.3]{RV16}):
\begin{prop}\label{prop:GMC}
\hspace{0.5cm}\\
For positive $\varepsilon$ and $b\in (0, 1)$, define the random measure $M_{b,g}^{\varepsilon}(\mathrm d x):=\varepsilon^{db^2}e^{db(X_{g,\varepsilon}(x)+\frac{Q}{2}\ln g)}\mathrm{d}\lambda(x)$. Then the following limit exists in probability (the limit is taken in the sense of weak convergence of measures):
\[
M_{b,g}:=\lim\limits_{\varepsilon\rightarrow0}M_{b,g}^{\varepsilon}.
\]
\end{prop}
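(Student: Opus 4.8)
The statement is an instance of the general theory of Gaussian multiplicative chaos; the plan is to verify the required hypotheses for the log-correlated field $X_g$ and the circle-average regularisation, paying attention to the background weight and to the point at infinity. First I would record the covariance structure of the regularised field: using the expression of $G_g$ from Proposition \ref{Green_def}, one checks that $\expect{X_{g,\varepsilon}(x)X_{g,\varepsilon'}(y)}=\frac2d G_g(x,y)+o(1)$ for $x\neq y$ as $\varepsilon,\varepsilon'\to0$, while $\expect{X_{g,\varepsilon}(x)^2}=\frac2d\ln\frac1\varepsilon+\rho_g(x)+o(1)$ with $\rho_g$ bounded on compact sets. This is exactly what makes $\varepsilon^{db^2}$ the correct normalisation, since $\varepsilon^{db^2}=\exp\left(-\frac{(db)^2}{2}\cdot\frac2d\ln\frac1\varepsilon\right)$, so that $\expect{M_{b,g}^{\varepsilon}(\mathbb{S}^d)}$ stays bounded. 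The deterministic factor $e^{db\frac Q2\ln g}$ is a positive continuous density that decays at infinity in the round metric, hence it only reweights the reference measure; and since the sphere is compact and $\{\infty\}$ carries no mass (the singular behaviour of the Euclidean circle average near $\infty$ being absorbed by this rapidly decaying weight, or handled by using the metric circle average there), one may reduce the statement to weak convergence of the measures $\varepsilon^{db^2}e^{dbX_{g,\varepsilon}}\,d\lambda_g$ tested against a countable family of compactly supported continuous functions on $\mathbb{R}^d$.

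Second, I would establish convergence in the $L^2$-phase $b^2<\frac12$. For a fixed test function $f$, expanding $\expect{M_{b,g}^{\varepsilon}(f)\,M_{b,g}^{\varepsilon'}(f)}$ as a double integral and carrying out the Gaussian computation produces an integrand bounded by $\norm{x-y}^{-2db^2}$ up to bounded factors, which is integrable over $(\mathbb{S}^d)^2$ precisely because $2db^2<d$. Dominated convergence together with the pointwise convergence of the covariances then shows that $(M_{b,g}^{2^{-n}}(f))_n$ is a Cauchy sequence in $L^2$; a comparable bound controls the difference between an arbitrary scale $\varepsilon$ and the nearest dyadic one, so the whole family converges in $L^2$, hence in probability, to a limit that is additive and nonnegative on test functions. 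A Kolmogorov-type modulus estimate (or monotone approximation of open sets) then upgrades this to weak convergence of the random measures $M_{b,g}^{\varepsilon}$ to a locally finite random measure $M_{b,g}$.

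Third, for the full subcritical range $b\in(0,1)$ the $L^2$ argument fails and I would instead use a martingale construction in the spirit of Kahane. Write the positive-definite kernel $\frac2d G_g$ as a series $\sum_{k\ge0}K_k$ of bounded continuous positive-definite kernels — say from a white-noise/heat-kernel decomposition over scales, or a wavelet expansion — which gives independent smooth Gaussian fields $Y_k$ with $X_g=\sum_k Y_k$; the partial-sum measures $\exp\left(db\sum_{k\le n}Y_k-\frac{(db)^2}{2}\expect{(\sum_{k\le n}Y_k)^2}\right)d\lambda_g$ form a nonnegative martingale, hence converge almost surely to a limiting measure. One then checks that this limit is nondegenerate exactly when $db<d$, that is $b<1$, by exhibiting a moment of some order $q\in(0,1)$ which stays bounded along the martingale (equivalently, by ruling out the degeneracy that occurs at the threshold $b\ge1$). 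The remaining point — and the one I expect to be the main obstacle — is to identify this martingale limit with the limit of the circle-average regularisation: this is a universality statement, proved by comparing the two approximating families through Kahane's convexity inequality, dominating both by a common mollification of $X_g$ and sending the mollification scale to zero; this requires careful uniform estimates but introduces no new idea. Alternatively, one may skip this identification altogether by directly invoking the convergence and uniqueness results for the circle-average chaos of log-correlated fields with kernels of the form $\ln\frac1{\norm{x-y}}+(\text{continuous})$ established in \cite{RV16}, of which the proposition is the case at hand.
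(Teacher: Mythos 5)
Your proposal is correct, but it is worth noting that the paper does not actually prove this proposition: it is stated as standard and deferred entirely to \cite[Theorem 2.3]{RV16}, which is precisely the ``alternative'' you offer in your last sentence. The bulk of your write-up is therefore a self-contained sketch of what that reference establishes, and it follows the standard route faithfully: the covariance asymptotics $\expect{X_{g,\varepsilon}(x)^2}=\frac2d\ln\frac1\varepsilon+O(1)$ justifying the normalisation $\varepsilon^{db^2}$ (and implicitly correcting the typo $X_{g,\varepsilon}(x)^2$ in the exponent, which should read $X_{g,\varepsilon}(x)$), the $L^2$ computation in the phase $2db^2<d$, the Kahane martingale construction with non-degeneracy for $\gamma=b\sqrt{2d}<\sqrt{2d}$, and the universality step identifying the martingale limit with the circle-average limit. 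Two small caveats. First, the kernel $\frac2d G_g$ of Proposition \ref{Green_def} is not itself $\sigma$-positive in Kahane's sense (it has zero $\lambda_g$-mean in each variable, hence takes negative values), so the martingale decomposition must be applied to the singular part $\ln\frac1{\norm{x-y}}$ with the bounded continuous remainder absorbed afterwards --- you acknowledge this at the end by restricting to kernels of the form $\ln\frac1{\norm{x-y}}+(\text{continuous})$, but the body of your third paragraph glosses over it. Second, your treatment of the point at infinity is slightly hand-wavy: for the round metric the weight $\hat g^{\,dbQ/2}$ does decay fast enough (since $bQ=b^2+1>\frac12$) to make the total mass finite, and this deserves the one-line check rather than an appeal to compactness of the sphere alone. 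Neither point is a gap in substance; your argument, or the direct citation, both suffice.
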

As explained in \cite{RV16} the above limit is actually independent of the choice of regularization of the field; the choice we made is mostly aimed at keeping the document as clear as possible. 
\begin{rmk}
Notice that we have included the term $\frac{Q}{2}\ln g$ in the expression of the measure: it corresponds to the quantum corrections that have to be added so that the expression defined above actually corresponds to the GMC measure of the field $X_{g}$. Indeed if we consider as background metric the round metric $\hat{g}$ introduced in~\eqref{eq:round_metric} we have that:
\[
\lim\limits_{\varepsilon\rightarrow0}M_{b,\hat g}^{\varepsilon}(\mathrm d x)=e^{db^2\tilde{C}_{\hat{g}}}\lim\limits_{\varepsilon\rightarrow0}e^{dbX_{\hat{g},\varepsilon}(x)-\frac{(db)^2}2\expect{X_{\hat{g},\varepsilon}(x)^2}}\mathrm{d}\lambda_{\hat{g}}(x)
\]
since an explicit computation yields the asymptotic behaviour 
\[
\expect{X_{\hat{g},\varepsilon}(x)^2}=\frac{2}{d}\left(\ln\frac{1}{\varepsilon}-\frac12\hat{g}(x)+\Tilde{C}_{\hat{g}}\right)+o(1)
\]
where $\Tilde{C}_{\hat{g}}$ is some positive constant whose value is not relevant in our context. This is also another way to see that the constant $Q$ has to be chosen equal to $b+\frac{1}{b}$ in order to take into account the quantum corrections.
\end{rmk}
\begin{rmk}
To conform to the usual conventions of the literature, one may choose instead to work with a pair $(\gamma,Q_{\gamma})$ where we have set
\[
\gamma= b\sqrt{2d}\quad\text{and}\quad Q_{\gamma}=\frac{\gamma}2+\frac d{\gamma}.
\]
By doing so we are lead to working with
\begin{align*}
    \expect{X_0(x)X_0(y)}=G_{\hat{g}}(x,y)\\
   \mathrm d M_{\gamma, \hat g}:=\lim\limits_{\varepsilon\rightarrow0}\varepsilon^{\frac{\gamma^2}{2}}e^{\gamma(X_0+\frac{Q_{\gamma}}{2}\ln \hat{g})}\mathrm{d}\lambda
\end{align*} the latter being well defined provided that $\gamma<\sqrt{2d}$. A table of the correspondence between these different conventions can be found in Appendix~\ref{table_phy/maths}.
\end{rmk}

To see that the introduction of these objects is relevant in the context of conformal geometry we provide a property of conformal covariance under M\"obius transforms:
\begin{prop}\label{GMC_Mobius}
Let $F$ be any bounded continuous function on $H^{-\frac d2}(\mathbb{R}^d,\hat g)$ and $f$ in $C_c^{\infty}(\mathbb{R}^d)$. Then for any M\"obius transform $\psi$ the following equality in law holds:
\begin{equation*}
    \left(F(X_{\hat{g}}),\int_{\mathbb{R}^d}f\mathrm d M_{b,\hat g}\right)\stackrel{(law)}=\left(F(X_{\hat{g}}\circ\psi^{-1}-m_{\hat{g}_{\psi}}(X_{\hat{g}})),e^{-db m_{\hat{g}_{\psi}}(X_{\hat{g}})}\int_{\mathbb{R}^d}f\circ\psi e^{db\frac Q2\ln\frac{\hat{g}_{\psi}}{\hat{g}}}\mathrm d M_{b,\hat g}\right)
\end{equation*}
where we have denoted $\hat g_{\psi}=\norm{\psi'}^2\hat g\circ\psi$.
\end{prop}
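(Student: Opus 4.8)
The plan is to establish the identity at the level of the Gaussian field first, then transfer it to the chaos measure by approximation, and finally relate the two sides via the covariance transformation rule already proved in Lemma \ref{Mobius_Green}. The starting point is the observation that, since $X_{\hat g}$ is a centered Gaussian field with covariance $\frac2d G_{\hat g}$, the field $Y:=X_{\hat g}\circ\psi^{-1}-m_{\hat g_\psi}(X_{\hat g}\circ\psi^{-1})$ — or, equivalently after a change of variables, the field appearing on the right-hand side — is again Gaussian and centered, so it suffices to check that its covariance equals that of $X_{\hat g}$. This is exactly where Lemma \ref{Mobius_Green} enters: computing $\mathbb E[X_{\hat g}(\psi x)X_{\hat g}(\psi y)]=\frac2d G_{\hat g}(\psi x,\psi y)$ and expanding with the lemma produces $\frac2d G_{\hat g}(x,y)$ plus terms of the form $-\frac1{2d}\ln\bigl(\norm{\psi'(x)}^2\frac{\hat g(\psi x)}{\hat g(x)}\bigr)$ which are functions of a single variable only; subtracting the $\hat g_\psi$-mean in each variable (which is what the shift by $m_{\hat g_\psi}(X_{\hat g})$ does) kills precisely these single-variable pieces, so the two covariances agree. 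Hence $X_{\hat g}\circ\psi^{-1}-m_{\hat g_\psi}(X_{\hat g})\stackrel{(law)}{=}X_{\hat g}$ jointly with the deterministic functionals of it, which handles the first coordinate $F(X_{\hat g})$.

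For the second coordinate I would work at the level of the regularised measures $M_{b,\hat g}^\varepsilon$ and pass to the limit, since the GMC is the limit in probability of these. Writing $M_{b,\hat g}^\varepsilon = \varepsilon^{db^2}e^{db(X_{\hat g,\varepsilon}+\frac Q2\ln\hat g)}d\lambda$ and performing the change of variables $z=\psi(w)$ in $\int_{\mathbb R^d}f\,dM_{b,\hat g}^\varepsilon$, the Jacobian contributes $\norm{\psi'(w)}^d$ (recall $\norm{\psi'}=\norm{\mathrm{Jac}\,\psi}^{2/d}$), the metric factor rearranges via $\hat g(\psi w)=\norm{\psi'(w)}^{-2}\hat g_\psi(w)$ so that the combination $e^{db\frac Q2\ln\hat g(\psi w)}\norm{\psi'(w)}^d$ reassembles into $e^{db\frac Q2\ln\hat g_\psi(w)}$ times the residual factor $\norm{\psi'(w)}^{d-db\frac Q}$; and since $dbQ=db(b+\frac1b)=db^2+d$, this residual power is exactly $\norm{\psi'(w)}^{-db^2}$, i.e. $\norm{\psi'(w)}^{-db^2}=\varepsilon^{-db^2}\bigl(\varepsilon\norm{\psi'(w)}\bigr)^{-db^2}$. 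One then recognises $(\varepsilon\norm{\psi'(w)})$ as the effective regularisation scale at $w$ (the circle average of $X_{\hat g}$ at $\psi(w)$ over a sphere of radius $\varepsilon$ corresponds, by Lemma \ref{Mobius_Green} and the scaling $\norm{\psi(x)-\psi(y)}=\norm{\psi'(x)}^{1/2}\norm{\psi'(y)}^{1/2}\norm{x-y}$, to a circle average of $X_{\hat g}\circ\psi$ at $w$ over radius $\varepsilon\norm{\psi'(w)}^{1/2}$, up to a smooth field contribution absorbed in the limit), so combining with the law identity for the field established in the first step and with the explicit $\ln\frac{\hat g_\psi}{\hat g}$ correction coming from the mean shift, the right-hand side of the claimed identity emerges. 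Taking $\varepsilon\to0$ and using that the limit is in probability (so joint convergence with $F(X_{\hat g})$ is preserved) concludes.

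The main obstacle is the bookkeeping in the second step: one must verify that the change-of-variables distortion of the circle-average regularisation is indeed absorbed correctly in the limit, i.e. that regularising $X_{\hat g}$ at scale $\varepsilon$ around $\psi(w)$ versus regularising $X_{\hat g}\circ\psi$ at scale $\varepsilon$ around $w$ differ, after the metric and Jacobian factors are accounted for, only by a smooth (continuous) field that converges and contributes a finite multiplicative density — which is precisely where the $e^{db\frac Q2\ln\frac{\hat g_\psi}{\hat g}}$ factor is generated — together with the almost sure convergence of $m_{\hat g_\psi}(X_{\hat g})$ (a finite-variance Gaussian) that produces the prefactor $e^{-db\,m_{\hat g_\psi}(X_{\hat g})}$. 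A clean way to organise this is to first note that $\hat g_\psi=\norm{\psi'}^2\hat g\circ\psi$ has the same $\mathcal Q$-curvature $(d-1)!$ as $\hat g$ (as observed in the proof of Lemma \ref{Mobius_Green}), so the GMC measure built from $X_{\hat g_\psi}$ coincides in law with $M_{b,\hat g}$, and then to identify $X_{\hat g}\circ\psi$ with $X_{\hat g_\psi}$ plus the explicit deterministic log-term; the insertion/vertex-operator-free version of the Girsanov-type manipulation then reduces to a deterministic rearrangement. I would also invoke the standard fact (e.g. \cite[Theorem 2.3]{RV16}) that these limits are insensitive to the choice of mollification so that the circle-average cutoff may be freely matched under $\psi$.
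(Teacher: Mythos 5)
Your proposal follows essentially the same route as the paper's proof: regularise the field, change variables to collect the Jacobian $\norm{\psi'}^d$ and the metric factor, use $dbQ=db^2+d$ to find the residual power $\norm{\psi'}^{-db^2}$, identify the resulting $(\varepsilon/\norm{\psi'})^{db^2}$-renormalised object with the circle-average regularisation of $X_{\hat g}\circ\psi$ at the matched scale (with negligible mass near $\psi^{-1}(\infty)$), and conclude via the identity in law $X_{\hat g}\circ\psi\stackrel{(law)}{=}X_{\hat g}-m_{\hat g_\psi}(X_{\hat g})$ coming from Lemma \ref{Mobius_Green}. One correction to your bookkeeping: by the scaling relation $\norm{\psi(x)-\psi(y)}=\norm{\psi'(x)}^{1/2}\norm{\psi'(y)}^{1/2}\norm{x-y}$, the sphere of radius $\varepsilon$ about $\psi(w)$ pulls back to (approximately) the sphere of radius $\varepsilon/\norm{\psi'(w)}$ about $w$ --- not $\varepsilon\norm{\psi'(w)}^{1/2}$ --- and it is precisely with this matched scale that $\varepsilon^{db^2}\norm{\psi'(w)}^{-db^2}=\bigl(\varepsilon/\norm{\psi'(w)}\bigr)^{db^2}$ becomes the correct renormalisation (your displayed identity $\norm{\psi'(w)}^{-db^2}=\varepsilon^{-db^2}\bigl(\varepsilon\norm{\psi'(w)}\bigr)^{-db^2}$ is also algebraically off), so the argument closes only once that exponent is fixed.
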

\begin{proof}
We come back to the $\varepsilon$-regularization of the field to get that
\begin{align*}
    \int_{\mathbb{R}^d}f\varepsilon^{db^2}e^{db(X_{\hat{g},\epsilon}+\frac Q2\ln\hat{g})}\mathrm{d}\lambda&=\int_{\mathbb{R}^d}f\circ\psi\varepsilon^{db^2}e^{db(X_{\hat{g},\epsilon}\circ\psi+\frac Q2\ln\hat{g}\circ\psi)}\norm{\psi'}^d\mathrm{d}\lambda\\
    &=\int_{\mathbb{R}^d}f\circ\psi e^{db\frac Q2\ln\frac{\hat{g}_{\psi}}{\hat{g}}}\left(\frac{\varepsilon}{\norm{\psi'}}\right)^{db^2}e^{db(X_{\hat{g},\epsilon}\circ\psi+\frac Q2\ln\hat{g})}\mathrm{d}\lambda.
\end{align*}
Next we observe that the GMC measures defined by $\lim\limits_{\epsilon\rightarrow0}\left(\frac{\varepsilon}{\norm{\psi'}}\right)^{db^2}e^{db(X_{\hat{g},\epsilon}\circ\psi+\frac Q2\ln\hat{g})}\mathrm{d}\lambda$ and $\lim\limits_{\varepsilon\rightarrow0}\varepsilon^{db^2}e^{db((X_{\hat{g}}\circ\psi)_{\varepsilon}+\frac Q2\ln\hat{g})}\mathrm{d}\lambda$ converge actually in probability to the same limiting random measure, since away from the point mapped to $\infty$ (say on $\mathbb{R}^d\setminus B(\psi^{-1}(\infty),\delta)$) one has that $\lim\limits_{\varepsilon\rightarrow0} \expect{(X_{\hat{g},\epsilon}\circ\psi)^2}-\expect{(X_{\hat{g}}\circ\psi)_{\frac{\varepsilon}{\norm{\psi'}}}^2}=0$ while close to $\psi^{-1}(\infty)$ the mass becomes negligible:
\[
    \expect{\int_{B(\psi^{-1}(\infty),\delta)}\left(\frac{\varepsilon}{\norm{\psi'}}\right)^{db^2}e^{db(X_{\hat{g},\epsilon}\circ\psi+\frac Q2\ln\hat{g})}\mathrm{d}\lambda}\leq C \int_{B(\psi^{-1}(\infty),\delta)} \left(\frac{\hat{g}}{\hat{g}_{\psi}}\right)^{\frac{db^2}{2}}\mathrm{d}\lambda_{\hat{g}}\rightarrow 0
\]
as $\delta\rightarrow0$.
To conclude, we use that $X_{\hat{g}}- m_{\hat{g}_{\psi}}(X_{\hat{g}})$ is distributed like $X_{\hat{g}}\circ\psi$ which follows from Lemma~\ref{lemma:mobius_GFF}.
\end{proof}

\subsection{Probabilistic definition of the path integral}
\subsubsection{Probabilistic interpretation of the path integral}
According to what has just been done, we can now give a meaning to the expression that appears in the definition of the Liouville field~\eqref{Liouville_field}. Indeed, for $F:H^{-\frac d2}(\mathbb{R}^d,\hat g)\to\mathbb{R}$ we may interpret the term 
\[
    \frac{1}{\mathcal{Z}_g}\int F(X)e^{-S_L(X,g)}D_gX
\]
by using the tools introduced above; this leads us to considering the following expression for it:
\begin{equation}\label{eq:partition_function}
\begin{split}
\Pi_{b,\mu}(g,F):=\frac 1{\mathcal{Z}_g}\int_{\mathbb{R}}& \mathbb{E}\Big[  F\left(X_g+\frac Q2\ln g +c\right)\\
&\exp\left(-\frac{d}{4\gamma_d}\int_{\mathbb{R}^d} 2Q\mathcal{Q}_g (X_g+c) \mathrm{d}\lambda_g-\mu\int_{\mathbb{R}^d} e^{db (X_g +\frac{Q}{2}\ln g+c)}\mathrm{d}\lambda\right)\Big]dc
\end{split}
\end{equation}
where we have considered the log-correlated field $X_g$ whose covariance kernel is given by $G_g$, and where the term $e^{db (X_g +\frac{Q}{2}\ln g+c)}\mathrm{d}\lambda$ corresponds to the GMC measure associated to the field $X_g +\frac{Q}{2}\ln g+c$ in the sense of Proposition~\ref{prop:GMC}. 

Here the renormalization factor $\mathcal{Z}_g$ which appears in front of the integral (formally) stands for the total mass of the measure of the log-correlated field and is usually referred to as the \textit{Polyakov-Alvarez conformal anomaly} (\cite{Pol81}, \cite{Alv83}).
Since we work with a Gaussian measure, we may interpret this factor $\mathcal{Z}_g$ as $\left(\det\mathcal{P}_g\right)^{-1/2}$ where $\mathcal{P}_g$ is the GJMS operator introduced above. The latter is known only in low dimensions (see~\cite{BO91} for the four-dimensional case); nonetheless it is conjectured that its variation under a conformal change of metric is given by~\cite[Equation (5.9)]{Di08}:
\begin{equation}\label{alvarez_polyakov}
    -\ln\frac{\det\mathcal{P}_{e^{2\varphi}g}}{\det\mathcal{P}_g}=2 c_{d}\int_{\mathbb{R}^d}\varphi\left(\mathcal{Q}_g+\frac 12\mathcal{P}_g\varphi\right)\mathrm{d}\lambda_g+\int_{\mathbb{R}^d}F_\varphi \mathrm{d}\lambda_{e^{2\varphi} g}-\int_{\mathbb{R}^d}F\mathrm{d}\lambda_g
\end{equation}
where the constant $c_d$ is given by 
\[
c_d:=\frac{1}{\gamma_d}\frac{(-1)^{\frac d2}}{d!}\int_{0}^{\frac d2}\prod_{k=0}^{\frac d2-1}(k^2-t^2)dt
\]
and the terms $F$, $F_\varphi$ that appear on the second integral are additional local curvature invariant terms, which are lower order terms arising from the holographic formula for the $\mathcal{Q}$-curvature \cite{GJ07}.
The first part in Equation~\eqref{alvarez_polyakov} is the \textit{universal part in  Polyakov formulas} and is the piece of the formula that fully makes sense in our context. It is usually referred to as the universal type A anomaly of $-\frac12\ln\det\mathcal{P}$.

As a consequence, we may choose to take only into account the first curvature terms by dropping higher order terms and setting:
\begin{equation}
\mathcal{Z}_g:=\exp\left(-c_d \int_{\mathbb{R}^d}\varphi\left(\mathcal{Q}_{\hat g}+\frac 12\mathcal{P}_{\hat g}\varphi\right)\mathrm{d}\lambda_{\hat g}\right)
\end{equation}
for $g=e^{2\varphi}\hat g$ a metric conformally equivalent to the round one $\hat g$ (in particular $Z(\hat g)=1$). We stress that our definition of $\mathcal{Z}_g$ is purely a convention and that its link with the regularized determinant is purely conjectural for the time being in the case $d\geq6$.

Therefore all the terms in Equation~\eqref{eq:partition_function} make sense; however the quantity $\Pi_{b,\mu}(g,1)$, even if it has a meaning, is actually ill-defined. Indeed, consider the setting where $g=\hat{g}$ the round metric, so that one has 
\[
\Pi_{b,\mu}(\hat{g},1)=\int_{\mathbb{R}}\expect{e^{-dQc-\mu e^{db c}M_{b,\hat g}(\mathbb{R}^d)}}dc.
\]
Then the integral in the variable $c$ is divergent, because of the behaviour as $c\rightarrow-\infty$ of the $e^{-dQc}$ term in the integrand. The divergence of the partition function is the quantum interpretation of the fact that a conformal metric with constant negative $\mathcal{Q}$-curvature on the sphere must have logarithmic singularities. In order to get rid of this issue, we will need to consider the Liouville action to which we have added conical singularities, which corresponds in the language of quantum field theory to inserting so-called \textit{Vertex Operators}.
Formally, for $x$ in $\mathbb{R}^d$ and a weight $\alpha\in\mathbb{R}$, these are defined by setting
\begin{equation}
V_{\alpha}(x)=e^{d\alpha\phi(x)}
\end{equation}
where $\phi$ is the Liouville field. The expectation of a product of Vertex Operators with respect to the Liouville field is then called \textit{correlation function of Vertex Operators} and is formally defined by  associating to any finite set of pairs $(\bm x,\bm\alpha)\coloneqq \Big((x_1,\alpha_1),\cdots,(x_N,\alpha_N)\Big)$ of elements of $\mathbb{R}^d\times\mathbb{R}$ (with $x_1,\cdots,x_N$ distinct) the quantity $<\prod_{i=1}^NV_{\alpha_i}(x_i)>_{b,\mu}$ given by
\begin{align*}
 &\frac{1}{\mathcal{Z}_g}\int_{\mathbb{R}}\expect{\prod_{i=1}^Ne^{d\alpha_i (X_g+\frac Q2\ln g+c)(x_i)}\exp\left(-\frac{d}{4\gamma_d}\int_{\mathbb{R}^d} 2Q\mathcal{Q}_g (X_g+c) \mathrm{d}\lambda_g-\mu\int_{\mathbb{R}^d} e^{db (X_g +\frac{Q}{2}\ln g+c)}\mathrm{d}\lambda\right)}dc.
\end{align*}
This corresponds to adding conical singularities $(\bm x,\bm{\alpha})$ to the Liouville functional of the theory.
\begin{rmk}
To conform to the usual convention in the mathematics literature, one usually works with the Vertex Operators being defined by the expression
\[
V_{\alpha}(x)=e^{\alpha\sqrt{\frac d2}\phi(x)}=e^{\alpha \phi_0(x)}
\]
where the field $\phi_0$ corresponds to $\sqrt{\frac{d}{2}}\left(X_g+\frac Q2\ln g+c\right)$ whose covariance is normalised to have a singularity $\sim\ln\frac{1}{\norm{x-y}}$ on the diagonal.
\end{rmk}
However the latter writing is purely heuristic since the field $X_g$ is not regular and cannot be evaluated pointwise. However we will see that working with a log-correlated field will enable us to provide a rigorous meaning to it. In a similar way as the one thanks to which we have defined the GMC measure this will be done by considering a limiting procedure involving a regularization of the field:
\begin{defi}
For positive $\varepsilon$, consider $X_{g,\varepsilon}$ to be the spherical average regularization of the field $X_g$ defined in Equation~\eqref{eq:average}. We define the $\varepsilon$-partition function associated to the marked points $(\bm{x},\bm{\alpha})$ by setting
\begin{equation}\label{eq:reg_partition_function}
\begin{split}
\Pi_{b,\mu}^{(\bm{x},\bm{\alpha})}(g,F;\varepsilon)\coloneqq &\frac{1}{\mathcal{Z}_g}\int_{\mathbb{R}}\mathbb{E}\left[F\left(X_{g,\varepsilon}+\frac Q2\ln g +c\right)\right.\left.\prod_{i=1}^N\varepsilon^{d\alpha_i^2}e^{d\alpha_i (X_{g,\varepsilon}+\frac Q2\ln g+c)(x_i)}\right.\\
     &\left.\exp\left(-\frac{dQ}{2\gamma_d}\int_{\mathbb{R}^d} \mathcal{Q}_g(X_{g,\varepsilon}+c) \mathrm{d}\lambda_g-\mu\int_{\mathbb{R}^d} \varepsilon^{db^2}e^{db (X_{g,\varepsilon} +\frac{Q}{2}\ln g+c)}\mathrm{d}\lambda\right)\right]dc .
\end{split}
\end{equation}The regularized correlation function of Vertex Operators is defined as
\begin{equation}
<\prod_{i=1}^NV_{\alpha_i}(x_i)>_{b,\mu,\varepsilon}\coloneqq \Pi_{b,\mu}^{(\bm{x},\bm{\alpha})}(g,1;\varepsilon).
\end{equation}
\end{defi}
In the sequel, we wish to inquire under which assumptions the limits \[
\Pi_{b,\mu}^{(\bm{x},\bm{\alpha})}(g,F)\coloneqq\lim\limits_{\varepsilon\rightarrow0}\Pi_{\gamma,\mu}^{(\bm{x},\bm{\alpha})}(g,F;\varepsilon)
\] and
\begin{equation}
<\prod_{i=1}^NV_{\alpha_i}(x_i)>_{b,\mu}\coloneqq\lim\limits_{\varepsilon\rightarrow0}<\prod_{i=1}^NV_{\alpha_i}(x_i)>_{b,\mu,\varepsilon}
\end{equation}
do exist.

\subsubsection{Existence of the path integral}
First of all, let us note that the quantity $\int_{\mathbb{R^d}}\mathcal{Q}_g\mathrm{d}\lambda_g$ does not depend on the metric $g$ in the conformal class of the round metric $\hat{g}$ and its value is therefore given by $(d-1)!\norm{\mathbb{S}^d}=2\gamma_d$. As a consequence we have that for $g$ conformally equivalent to $\hat g$
\begin{align*}
&\mathcal{Z}_g\Pi_{b,\mu}^{(\bm{x},\bm{\alpha})}(g,F;\varepsilon)=\prod_{i=1}^N g(x_i)^{\frac{d\alpha_i}2Q}\int_{\mathbb{R}}e^{dc(\sum_i\alpha_i-Q)}\\
&\mathbb{E}\left[F\left(X_{g,\varepsilon}+\frac Q2\ln g +c\right)\prod_{i=1}^N\varepsilon^{d\alpha_i^2}e^{d\alpha_i X_{g,\varepsilon}(x_i)}\exp\left(-\frac{dQ}{2\gamma_d}\int_{\mathbb{R}^d} \mathcal{Q}_gX_{g,\varepsilon} \mathrm{d}\lambda_g-\mu\int_{\mathbb{R}^d} \varepsilon^{db^2}e^{db (X_{g,\varepsilon} +\frac{Q}{2}\ln g+c)}\mathrm{d}\lambda\right)\right]dc  .
\end{align*}

At first, let us assume that $g$ is the round metric $\hat{g}$ (the statement of Theorem~\ref{thm:A_anomaly} shows that this no restriction); then the term $\int_{\mathbb{R}^d} \mathcal{Q}_gX_{g} \mathrm{d}\lambda_g$ vanishes, and since we know that $\expect{X_{\hat{g}}(x)^2}=\frac2d\left(\ln\frac{1}{\varepsilon}-\frac12\hat{g}(x)+\Tilde{C}_{\hat{g}}\right)+o(1)$ we may rewrite $\Pi_{b,\mu}^{(\bm{x},\bm{\alpha})}(\hat{g},F;\varepsilon)$ under the form
\begin{align*}
&\prod_{i=1}^Ne^{\Tilde{C}_{\hat{g}}d\alpha_i^2} \hat{g}(x_i)^{\frac{d\alpha_i}2(Q-\alpha_i)}\int_{\mathbb{R}}e^{cd(\sum_i\alpha_i-Q)}(1+o(1))\\
&\mathbb{E}\left[F\left(X_{\hat g,\varepsilon}+\frac Q2\ln \hat g +c\right)\prod_{i=1}^Ne^{d\alpha_i X_{\hat g,\varepsilon}(x_i)-\frac{(d\alpha_i)^2}2\expect{X_{\hat g,\varepsilon}(x_i)^2}}\exp\left(-\mu e^{db c}\int_{\mathbb{R}^d} \varepsilon^{db^2}e^{db (X_{\hat g,\varepsilon} +\frac{Q}{2}\ln \hat{g})}\mathrm{d}\lambda\right)\right]dc
\end{align*}
where the quantity denoted by $o(1)$ is purely deterministic and converges towards zero as $\varepsilon$ goes to zero.

The exponential terms that appear can be interpreted as Girsanov transforms (see Theorem~\ref{Girsanov}): working under the probability measure whose Radon-Nikodym derivative with respect to the measure of the log-correlated field $X_{\hat{g}}$ is given by
\[
\prod_{i=1}^Ne^{d\alpha_i X_{\hat g,\varepsilon}(x_i)-\frac{(d\alpha_i)^2}2\expect{X_{\hat g,\varepsilon}(x_i)^2}}
\prod_{i\neq j} e^{\frac{d\alpha_id\alpha_j}2 \expect{X_{\hat{g},\varepsilon}(x_i)X_{\hat{g},\varepsilon}(x_j)}}
\]
is tantamount to shifting the law of the field by an additive factor of 
\begin{equation}
H_{\hat{g},\varepsilon}(x):=\sum_{i=1}^N 2\alpha_i G_{\hat{g},\varepsilon}(x,x_i),
\end{equation}
where we have denoted by $G_{\hat{g},\varepsilon}(x,y)$ the $\varepsilon$-spherical average regularization of $G_{\hat{g}}$, defined by setting
\begin{equation}
G_{\hat{g},\varepsilon}(x,y)\coloneqq \frac{1}{\norm{\mathbb{S}^{d-1}}^2}\iint_{(\mathbb{S}^{d-1})^2} G_{\hat g}(x+\varepsilon z_1,y+\varepsilon z_2)\mathrm{d}\lambda_{\partial}(z_1)\mathrm{d}\lambda_{\partial}(z_2).
\end{equation}
Therefore,  $\Pi_{b,\mu}^{(\bm{x},\bm{\alpha})}(\hat{g},F)$ is actually given by
\begin{align*}
e^{C_{\hat g}(\bm{x},\bm{\alpha})}\prod_{i=1}^N \hat g(x_i)^{\frac{d\alpha_i}2(Q-\alpha_i)} &\lim\limits_{\varepsilon\rightarrow0}\int_{\mathbb{R}}e^{cd(\sum_i\alpha_i-Q)}\\
&\mathbb{E}\left[F\left(X_{\hat{g},\varepsilon}+H_{\hat{g},\varepsilon}+\frac Q2\ln \hat g +c\right)\exp\left(-\mu e^{db c}\int_{\mathbb{R}^d} \varepsilon^{db^2}e^{db (X_{\hat{g},\varepsilon} +H_{\hat{g},\varepsilon}+\frac{Q}{2}\ln \hat{g})}\mathrm{d}\lambda\right)\right]dc
\end{align*}
where we have set $C_{\hat g}(\bm{x},\bm{\alpha})=d\sum_{i\neq j}\alpha_i\alpha_jG_{\hat{g}}(x_i,x_j)$, and provided that the limit exists.

As a consequence the convergence of the $\varepsilon$-partition function in the round metric $\Pi_{b,\mu}^{(\bm{x},\bm{\alpha})}(\hat{g},F;\varepsilon)$ is ensured by a regularity result for the GMC measure (which follows from \cite[Lemma 2.7]{RV16}):
\begin{lemma}\label{lemma_conv}
For positive $\varepsilon$, denote by $Z_{\varepsilon,\hat g}$ the random variable 
\[
 Z_{\varepsilon,\hat g}:=\int_{\mathbb{R}^d} \varepsilon^{db^2}e^{db (X_{\hat{g},\varepsilon} +H_{\hat{g},\varepsilon}+\frac{Q}{2}\ln \hat{g})}\mathrm{d}\lambda
\]
Then for any negative $s$:
\begin{itemize}
	\item If for any $i$, $\alpha_i<\frac Q2$, 
\[
\lim\limits_{\varepsilon\rightarrow0} \expect{Z_{\varepsilon,\hat g}^s}=\expect{Z_{\hat g}^s}
\]
where $Z_{\hat g}\coloneqq\int_{\mathbb{R}^d}e^{db H_{\hat{g}}}dM_{b,\hat g}$ satisfies $0<\expect{Z_{\hat g}^s}<\infty$.
	\item If for some $i$, $\alpha_i\geq \frac Q2$, then
	\[
\lim\limits_{\varepsilon\rightarrow0} \expect{Z_{\varepsilon, \hat g}^s}=0
\]
\end{itemize}
\end{lemma}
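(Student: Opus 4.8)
The plan is to handle both regimes of the lemma with a single two-step scheme. Write $Z_\varepsilon=\int_{\mathbb{R}^d}e^{dbH_{\hat g,\varepsilon}(x)}\,dM_{b,\hat g}^\varepsilon(x)$, treating $e^{dbH_{\hat g,\varepsilon}}$ as a deterministic weight against the regularised GMC $M_{b,\hat g}^\varepsilon$ of $X_{\hat g}$, which converges weakly in probability to $M_b=M_{b,\hat g}$. In \textbf{Step 1} I would establish the convergence in probability of $Z_\varepsilon$ towards its (possibly infinite) limit $Z_0=\int_{\mathbb{R}^d}e^{dbH_{\hat g}}\,dM_b$; in \textbf{Step 2} I would prove a lower-tail bound that is uniform in $\varepsilon$, making the family $(Z_\varepsilon^s)_\varepsilon$ uniformly integrable, and then combine the two.

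For Step 1, recall that $H_{\hat g,\varepsilon}(x)=\sum_i 2\alpha_iG_{\hat g,\varepsilon}(x,x_i)\to H_{\hat g}(x)$ uniformly on every compact subset of $\mathbb{R}^d\setminus\{x_1,\dots,x_N\}$, that $e^{dbH_{\hat g}}$ is continuous there and stays bounded as $\norm{x}\to\infty$ (since $G_{\hat g}$ is the Green kernel of the compact sphere), so that, together with the weak convergence of $M_{b,\hat g}^\varepsilon$, $\int_Ke^{dbH_{\hat g,\varepsilon}}\,dM_{b,\hat g}^\varepsilon\to\int_Ke^{dbH_{\hat g}}\,dM_b$ in probability for $K$ a large ball with the $x_i$ excised. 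It then remains to control, uniformly in $\varepsilon$, the mass of the small balls $B(x_i,\delta)$ and of a neighbourhood of $\infty$. Near $x_i$ one has $H_{\hat g,\varepsilon}(x)=2\alpha_iG_{\hat g,\varepsilon}(x,x_i)+O(1)$ with $G_{\hat g,\varepsilon}(x,x_i)=\log\tfrac1{\max(\norm{x-x_i},\varepsilon)}+O(1)$ uniformly, so the mass of $B(x_i,\delta)$ is comparable to $\int_{B(x_i,\delta)}\max(\norm{x-x_i},\varepsilon)^{-2db\alpha_i}\,dM_{b,\hat g}^\varepsilon$; splitting into dyadic annuli and using the multifractal moment bound $\expect{M_{b,\hat g}^\varepsilon(B(x_i,r))^q}\le C_qr^{\zeta(q)}$ with $\zeta(q)=(d+db^2)q-db^2q^2$ and $q\in(0,1)$, valid uniformly in $\varepsilon$, bounds this in $L^q$ by $C_q\,\delta^{\zeta(q)-2db\alpha_iq}$ plus a term vanishing with $\varepsilon$. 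In the first regime $\alpha_i<\tfrac Q2=\tfrac{1+b^2}{2b}$, hence $2db\alpha_i<d+db^2=\sup_{0<q<1}\zeta(q)/q$, and one may pick $q$ with $\zeta(q)-2db\alpha_iq>0$: this is exactly where the Seiberg bound enters, and it yields $Z_\varepsilon\to Z_0$ in probability with $0<\expect{Z_0^q}<\infty$, so $Z_0\in(0,\infty)$ a.s. In the second regime, if $\alpha_j\ge\tfrac Q2$ then, since then $\alpha_j>0$, a matching \emph{lower} bound $Z_\varepsilon\ge c\int_{B(x_j,\delta)}\max(\norm{x-x_j},\varepsilon)^{-2db\alpha_j}\,dM_{b,\hat g}^\varepsilon\ge c\sum_{\varepsilon\le 2^{-k}\le\delta}2^{2db\alpha_jk}M_{b,\hat g}^\varepsilon(A_k^j)$ over dyadic annuli $A_k^j$ around $x_j$, together with the self-similar (star-scale-invariant) structure of GMC, shows that the contribution of the scales between $\varepsilon$ and $\delta$ diverges, so $Z_\varepsilon\to+\infty$ in probability.

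For Step 2, fix a closed ball $B$ disjoint from $\{x_1,\dots,x_N\}$; then $m:=\inf_{0<\varepsilon\le1}\inf_Be^{dbH_{\hat g,\varepsilon}}>0$ by the uniform convergence of $H_{\hat g,\varepsilon}$ on $B$, so $Z_\varepsilon\ge m\,M_{b,\hat g}^\varepsilon(B)$ and, since $s<0$, $Z_\varepsilon^{2s}\le m^{2s}M_{b,\hat g}^\varepsilon(B)^{2s}$. The finiteness of all negative moments of GMC, uniformly in the regularisation (which follows from the super-polynomial decay of $\mathbb{P}(M_{b,\hat g}^\varepsilon(B)<\delta)$ as $\delta\to0$, uniformly in $\varepsilon$; cf.\ \cite{RV16,HRV16}), gives $\sup_{0<\varepsilon\le1}\expect{M_{b,\hat g}^\varepsilon(B)^{2s}}<\infty$, hence $\sup_{0<\varepsilon\le1}\expect{Z_\varepsilon^{2s}}<\infty$, so $(Z_\varepsilon^s)_\varepsilon$ is bounded in $L^2$ and in particular uniformly integrable; applying the same bound with $\varepsilon\to0$ also gives $\expect{Z_0^s}<\infty$, while $\expect{Z_0^s}>0$ is immediate from $Z_0<\infty$ a.s. In the first regime, $Z_\varepsilon^s\to Z_0^s$ in probability by Step 1, and uniform integrability yields $\expect{Z_\varepsilon^s}\to\expect{Z_0^s}$; in the second, $Z_\varepsilon^s\to0$ in probability, and the $L^2$-bound gives $\expect{Z_\varepsilon^s}\to0$.

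The main obstacle is Step 1, and precisely the sharp threshold $\alpha_i<\tfrac Q2$: for $\alpha_i\in[\tfrac1{2b},\tfrac Q2)$ the first moment of the mass near $x_i$ is already infinite, so one genuinely needs the fractional (multifractal) moment estimate rather than a crude $L^1$ bound, and one must make all these estimates uniform in $\varepsilon$; the divergence claim in the second regime is delicate for the same reason, the heavy tails of GMC precluding a naive first-moment argument. These are exactly the estimates carried out in \cite[Lemma 2.7]{HRV16}, whose proof transfers to the present setting with $\gamma$ replaced by $b\sqrt{2d}$.
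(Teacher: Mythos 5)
The paper offers no proof of this lemma at all---it simply invokes \cite[Lemma 2.7]{HRV16}---so the only benchmark is that reference, whose argument your sketch correctly reproduces: multifractal moment bounds $\expect{M^\varepsilon(B(x_i,r))^q}\le C_q r^{\zeta(q)}$ on dyadic annuli to identify $\alpha_i<\frac Q2$ as the threshold for the local mass near each insertion, uniform negative moments of the GMC on a ball away from the insertions for uniform integrability of $Z_\varepsilon^s$, and divergence of the local mass in the supercritical case. Your proposal is therefore correct and consistent with (indeed more detailed than) the paper's treatment.
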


With this result at hand, we are in position to provide a rigorous statement for the convergence of the partition function. This extends the construction proposed in~\cite{DKRV16}.
\begin{theorem}\label{partition_limit}
Let $F$ be any continuous bounded functional over the space $H^{-\frac d2}(\mathbb{R}^d,\hat g)$ and let $(\bm x,\bm \alpha)$ be finitely many marked points with $x_1,\cdots,x_N$ distinct.
Assume that the bound
\begin{equation}
\sum_{i=1}^N\alpha_i-Q>0
\end{equation}
holds. Then the limit 
\[
\Pi_{b,\mu}^{(\bm{x},\bm{\alpha})}(\hat{g},F):=\lim\limits_{\varepsilon\rightarrow0}\Pi_{b,\mu}^{(\bm{x},\bm{\alpha})}(\hat g,F;\varepsilon)
\]
exists and is non-zero if and only if $\alpha_i<\frac Q2$ for all $1\leq i\leq N$. Moreover 
\begin{equation}\label{Equation_partition}
\begin{split}
\Pi_{b,\mu}^{(\bm{x},\bm{\alpha})}(\hat{g},F)=
e^{C_{\hat g}(\bm{x},\bm{\alpha})}&\prod_{i=1}^N \hat g(x_i)^{\frac{\Delta_{\alpha_i}}{2}} \times\\
&\int_{\mathbb{R}}e^{cd(\sum_i\alpha_i-Q)}\mathbb{E}\left[F\left(X_{\hat{g}}+H_{\hat{g}}+\frac Q2\ln g +c\right)e^{-\mu e^{db c}Z_{\hat g}}\right]dc
\end{split}
\end{equation}
where $\Delta_{\alpha}:=d\alpha(Q-\alpha)$ is the dimension of the vertex operator $V_{\alpha}(x)$, and
\begin{equation}
H_{\hat{g}}(x):=\sum_{i=1}^N 2\alpha_i G_{\hat{g}}(x,x_i),
\end{equation}
\end{theorem}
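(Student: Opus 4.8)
The plan is to reduce the statement to the convergence lemma (Lemma~\ref{lemma_conv}) via the Girsanov transform and a dominated-convergence argument. The preceding computation in the excerpt has already rewritten $\Pi_{b,\mu}^{(\bm{x},\bm{\alpha})}(\hat g,F;\varepsilon)$, after performing the Girsanov shifts that absorb the vertex-operator insertions $\prod_i \varepsilon^{d\alpha_i^2}e^{d\alpha_i X_{\hat g,\varepsilon}(x_i)}$ into a deterministic shift $H_{\hat g,\varepsilon}(x)=\sum_i 2\alpha_i G_{\hat g,\varepsilon}(x,x_i)$ of the field, picking up the constant $e^{C(\bm x,\bm\alpha)}$ with $C(\bm x,\bm\alpha)=d\sum_{i\neq j}\alpha_i\alpha_j G_{\hat g}(x_i,x_j)$ and the Weyl factors $\prod_i \hat g(x_i)^{d\alpha_i(Q-\alpha_i)/2}=\prod_i\hat g(x_i)^{\Delta_{\alpha_i}/2}$. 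So the remaining task is purely to justify passing to the limit $\varepsilon\to 0$ in
\[
\int_{\mathbb{R}}e^{cd(\sum_i\alpha_i-Q)}\,\mathbb{E}\left[F\!\left(X_{\hat g,\varepsilon}+H_{\hat g,\varepsilon}+\tfrac Q2\ln g+c\right)e^{-\mu e^{dbc}Z_\varepsilon}\right]dc,
\]
and to identify when the limit is finite and non-zero.

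First I would treat the $c$-integral. Split it at some fixed $c_0$. On $c\geq c_0$, since $\sum_i\alpha_i-Q>0$ the factor $e^{cd(\sum_i\alpha_i-Q)}$ grows, but it is controlled by the GMC damping: using $e^{-\mu e^{dbc}Z_\varepsilon}\leq (\mu e^{dbc}Z_\varepsilon)^{-s}$ for a suitable $s>0$ (chosen so that $d(\sum_i\alpha_i-Q)-dbs<0$), one bounds the integrand by $C e^{c\,d(\sum_i\alpha_i-Q-bs)}\|F\|_\infty \,\mathbb{E}[Z_\varepsilon^{-s}]$, which is integrable on $[c_0,\infty)$ and, by Lemma~\ref{lemma_conv}, has $\mathbb{E}[Z_\varepsilon^{-s}]$ bounded uniformly in $\varepsilon$ (converging to $\mathbb{E}[Z_0^{-s}]<\infty$). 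On $c<c_0$, $e^{cd(\sum_i\alpha_i-Q)}$ decays because $\sum_i\alpha_i-Q>0$, and the GMC exponential is bounded by $1$, so the tail is integrable there too, uniformly in $\varepsilon$ (here $\|F\|_\infty$ suffices). This gives a uniform-in-$\varepsilon$ integrable dominating function, so dominated convergence reduces everything to the pointwise-in-$c$ limit of $\mathbb{E}[F(X_{\hat g,\varepsilon}+H_{\hat g,\varepsilon}+\cdots)e^{-\mu e^{dbc}Z_\varepsilon}]$.

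For that pointwise limit I would use that $X_{\hat g,\varepsilon}\to X_{\hat g}$ and $H_{\hat g,\varepsilon}\to H_{\hat g}$ in $H^{-d/2}$ (the circle-average regularisations converge, and $H_{\hat g}$ has only integrable logarithmic singularities at the $x_i$ hence lies in that space), so by continuity and boundedness of $F$ together with $Z_\varepsilon\to Z_0$ in probability (Lemma~\ref{lemma_conv}, which also gives $Z_0\in(0,\infty)$ a.s.\ under $\alpha_i<Q/2$), the product $F(\cdots)e^{-\mu e^{dbc}Z_\varepsilon}$ converges in probability and is bounded by $\|F\|_\infty$, whence the expectations converge. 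Assembling the pieces yields formula~\eqref{Equation_partition}. Finally, for the dichotomy: if all $\alpha_i<Q/2$, Lemma~\ref{lemma_conv} gives $0<\mathbb{E}[Z_0^s]<\infty$ and the $c$-integral converges to a strictly positive quantity (the integrand is strictly positive for any fixed nonnegative $F\not\equiv 0$, and taking $F\equiv 1$ shows non-vanishing of $\Pi^{(\bm x,\bm\alpha)}_{b,\mu}(\hat g,1)$); if instead some $\alpha_i\geq Q/2$, the second bullet of Lemma~\ref{lemma_conv} gives $\mathbb{E}[Z_\varepsilon^s]\to 0$ for every negative $s$, and feeding this into the same domination bound on $c\geq c_0$ forces $\int_{c_0}^\infty\to 0$ while one checks separately that the contribution of $c<c_0$ also vanishes (there $e^{-\mu e^{dbc}Z_\varepsilon}\to 1$ but $Z_\varepsilon\to 0$ in probability makes the GMC mass degenerate, or more simply one shifts $c_0\to-\infty$), so the whole limit is $0$.

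The main obstacle I anticipate is the uniform-in-$\varepsilon$ domination near $c\to+\infty$: one must choose the negative moment order $s$ in the range $0<s<(\sum_i\alpha_i-Q)/b$ \emph{and} ensure $\mathbb{E}[Z_\varepsilon^{-s}]$ is finite and bounded uniformly in $\varepsilon$, which is exactly the content of Lemma~\ref{lemma_conv} but requires checking that the range of admissible $s$ is non-empty — it is, precisely because of the Seiberg-type hypothesis $\sum_i\alpha_i-Q>0$. The rest (continuity of $F$, convergence of the regularised Green's insertions, the elementary tail estimate for $c<c_0$) is routine.
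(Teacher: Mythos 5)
Your proposal is correct and follows essentially the same route as the paper: Girsanov absorption of the vertex insertions into the shift $H_{\hat g,\varepsilon}$, then reduction to Lemma \ref{lemma_conv} plus a dominated-convergence argument in the $c$-variable (which the paper leaves implicit). Two small slips in your closing remarks, neither of which damages the argument in the body: the admissible range for the negative-moment order is $s>(\sum_i\alpha_i-Q)/b$ (as your displayed condition $d(\sum_i\alpha_i-Q)-dbs<0$ correctly requires), not $s<(\sum_i\alpha_i-Q)/b$ --- the Seiberg bound $\sum_i\alpha_i>Q$ is what makes the integral converge at $c\to-\infty$, while at $c\to+\infty$ any sufficiently large $s$ works because all negative moments of the GMC mass are finite; and when some $\alpha_i\geq\frac Q2$ the degeneracy is $Z_\varepsilon\to+\infty$ (the insertion $e^{dbH_{\hat g}}$ makes the mass blow up near $x_i$), not $Z_\varepsilon\to 0$, which is precisely why $\mathbb{E}[Z_\varepsilon^{-s}]\to 0$ and why your fallback of sending $c_0\to-\infty$ is the right way to finish that case.
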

The bounds that appear in the above statement are usually referred to as the \textit{Seiberg bounds} in the two-dimensional setting~\cite{Sie90}. They correspond to the quantum version of the bounds required to ensure the existence of a conformal structure with constant negative $\mathcal{Q}$-curvature on the sphere in Theorem~\ref{classical_conical}.

The convergence of the partition function shows that the law of the Liouville Field can indeed been defined in a meaningful way. In the next subsection we review some properties enjoyed by the quantum field theory thus defined.

\subsection{First properties}
\subsubsection{Conformal change of metrics: A-Type anomaly}
In the previous paragraph, we have defined the quantities $\Pi_{b,\mu}^{(\bm{x},\bm{\alpha})}(g,F)$ in the special case where we have considered as background metric $g=\hat{g}$. However, this is absolutely no restriction since the latter is independent of the background metric $g$ conformal to $\hat{g}$ up to a multiplicative factor given by the \textit{A-type conformal anomaly coefficient}. This proposition is natural if we recall that the theory was aimed at describing canonical conformal structures on the sphere and therefore should not depend on the background metric initially considered.

We adopt the conventions that $e^{2\varphi}g$ is conformally equivalent to $g$ when $\varphi-m_g(\varphi)$ belongs to the Hilbert space $H^{\frac d2}(\mathbb{R}^d,g)$. The following statement matches the one derived in \cite[III.C]{LO18}:
\begin{theorem}[The A-type anomaly]\label{thm:A_anomaly}
Let $g=e^{2\varphi}\hat{g}$ be a metric conformally equivalent to $\hat{g}$. Then, under the assumptions of Theorem~\ref{partition_limit}, $\lim\limits_{\varepsilon\rightarrow0}\Pi_{b,\mu}^{(\bm{x},\bm{\alpha})}(g,F;\varepsilon)$ exists and is positive. Moreover, one has the following \textit{A-Type anomaly}:
\begin{equation}
\ln\frac{\Pi_{b,\mu}^{(\bm{x},\bm{\alpha})}(g,F)}{\Pi_{b,\mu}^{(\bm{x},\bm{\alpha})}(\hat{g},F)}=d(-1)^{\frac d2}a\left(\int_{\mathbb{R}^d} 2\varphi\left(\mathcal{Q}_{\hat{g}}+\mathcal{P}_{\hat g}\varphi\right) \mathrm{d}\lambda_{\hat{g}}\right)
\end{equation}
where \begin{equation}
a\coloneqq\frac{2}{(d!)^2\norm{\mathbb S^d}}\int_{0}^{\frac d2}\prod_{k=0}^{\frac d2-1}(k^2-t^2)dt + \frac{(-1)^{\frac d2}}{(d-1)!\norm{\mathbb S^d}}Q^2
\end{equation}
is the so-called \textit{A-type conformal anomaly coefficient}.
\end{theorem}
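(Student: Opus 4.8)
The plan is to track how each ingredient in the $\varepsilon$-regularised partition function $\Pi_{b,\mu}^{(\bm x,\bm\alpha)}(g,F;\varepsilon)$ transforms when the background metric is changed from $\hat g$ to $g=e^{2\varphi}\hat g$. There are four such ingredients: the renormalisation constant $\mathcal Z_g$, the curvature term $\exp\big(-\frac{dQ}{(d-1)!\norm{\mathbb{S}^d}}\int \mathcal{Q}_g(X_{g,\varepsilon}+c)\,d\lambda_g\big)$, the vertex-operator weights $\prod_i\varepsilon^{d\alpha_i^2}e^{d\alpha_i(X_{g,\varepsilon}+\frac Q2\ln g+c)(x_i)}$, and the GMC mass $\mu\int\varepsilon^{db^2}e^{db(X_{g,\varepsilon}+\frac Q2\ln g+c)}\,d\lambda$. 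First I would use the conformal change of metric formula $\mathcal{Q}_{e^{2\varphi}g}e^{d\varphi}=\mathcal{P}_g\varphi+\mathcal{Q}_g$ together with $d\lambda_g=e^{d\varphi}d\lambda_{\hat g}$ to rewrite the curvature integral as $\int\mathcal{Q}_gX_{g,\varepsilon}\,d\lambda_g=\int(\mathcal{P}_{\hat g}\varphi+\mathcal{Q}_{\hat g})X_{g,\varepsilon}\,d\lambda_{\hat g}$, and then recall the key distributional identity from the construction of $X_g$, namely that under a conformal change $X_{g'}$ has the same law as $X_g-m_{g'}(X_g)$, which lets me pass to working with the single field $X_{\hat g}$ at the price of a (deterministic, computable) mean shift.

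Second, I would use the explicit formula for $\mathcal Z(g)$ recorded in the excerpt, $\mathcal Z(g)=\exp\big(-c_d\int\varphi(\mathcal{Q}_{\hat g}+\frac12\mathcal{P}_{\hat g}\varphi)\,d\lambda_{\hat g}\big)$, which already contributes a piece proportional to $\int 2\varphi(\mathcal{Q}_{\hat g}+\frac12\mathcal{P}_{\hat g}\varphi)\,d\lambda_{\hat g}$. Next I would feed the curvature term through the Girsanov/Cameron–Martin shift: the exponential $e^{-\frac{dQ}{(d-1)!\norm{\mathbb{S}^d}}\int(\mathcal{P}_{\hat g}\varphi)X_{\hat g,\varepsilon}\,d\lambda_{\hat g}}$ is a Wick exponential of a linear functional of the Gaussian field, so applying Theorem \ref{Girsanov} reweights the law of $X_{\hat g}$ by an additive deterministic shift proportional to $G_{\hat g}$ paired against $\mathcal{P}_{\hat g}\varphi$ — and because $G_{\hat g}$ inverts $\mathcal{P}_{\hat g}$ (modulo means and constants, Proposition \ref{Green_def}), this shift is just $\varphi$ itself up to an additive constant and a mean term. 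Tracking this produces a Gaussian normalisation factor $\frac12\mathrm{Var}$ of that linear functional, which is exactly a further $\int\varphi\mathcal{P}_{\hat g}\varphi\,d\lambda_{\hat g}$ contribution, while the shift inside $F$ converts $X_{\hat g}+\frac Q2\ln\hat g+c$ into (a field distributed as) $X_g+\frac Q2\ln g+c$, and the shift inside the GMC and inside the vertex operators combines with the $\frac Q2(\ln g-\ln\hat g)=Q\varphi$ term to leave those factors invariant in law. Finally the vertex weights contribute $\prod_i g(x_i)^{d\alpha_i Q/2}/\hat g(x_i)^{d\alpha_i Q/2}=\prod_i e^{d\alpha_i Q\varphi(x_i)}$, which I expect to cancel against matching pointwise evaluations of the shift, leaving no $\bm x,\bm\alpha$ dependence in the anomaly — consistent with the claimed formula.

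Assembling all the pieces, the logarithm of the ratio is a sum of multiples of $\int\varphi\mathcal{Q}_{\hat g}\,d\lambda_{\hat g}$ and $\int\varphi\mathcal{P}_{\hat g}\varphi\,d\lambda_{\hat g}$; I would then check that the coefficients organise into the single combination $\int 2\varphi(\mathcal{Q}_{\hat g}+\mathcal{P}_{\hat g}\varphi)\,d\lambda_{\hat g}$ with prefactor $d(-1)^{d/2}a$, where the two summands of $a$ come respectively from $\mathcal Z(g)$ (the $c_d$-term, i.e.\ the genuine functional-determinant anomaly) and from the $Q^2$-term generated by the Girsanov normalisation of the curvature insertion. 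The existence and positivity of $\lim_{\varepsilon\to0}\Pi_{b,\mu}^{(\bm x,\bm\alpha)}(g,F;\varepsilon)$ under the Seiberg bounds then follows because, after all the deterministic shifts and reweightings, the $\varepsilon\to0$ limit is controlled by exactly the same GMC-regularity input (Lemma \ref{lemma_conv}) as in the $\hat g$ case. The main obstacle is bookkeeping: one must be careful that the $m_{g}(\cdot)$ mean-value terms, the $\theta_g$/$C_{\hat g}$ additive constants in $G_g$, and the $o(1)$ deterministic pieces in $\expect{X_{g,\varepsilon}(x)^2}$ all either cancel or are absorbed into $\mathcal Z(g)$, so that nothing spurious survives; and that the Girsanov shift associated with $\mathcal{P}_{\hat g}\varphi$ really reproduces $\varphi$ rather than $\varphi$ plus an uncontrolled harmonic piece, which is where Proposition \ref{Green_def} (uniqueness of the Green's kernel on the sphere) does the essential work.
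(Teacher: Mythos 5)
Your proposal is correct and follows essentially the same route as the paper: pass to $X_{\hat g}$ via the law identity $X_g\overset{law}{=}X_{\hat g}-m_g(X_{\hat g})$, rewrite the curvature insertion using $\mathcal{Q}_{g}e^{d\varphi}=\mathcal{P}_{\hat g}\varphi+\mathcal{Q}_{\hat g}$, apply Girsanov so that the resulting shift $-Q(\varphi-m_{\hat g}(\varphi))$ cancels the $Q\varphi$ hidden in $\frac Q2\ln g$ (with the leftover means absorbed by translations in $c$), and read off the $Q^2$-part of $a$ from the Girsanov variance factor and the $Q$-independent part from $\mathcal Z(g)$. This is exactly the paper's bookkeeping, including the observation that the $\hat g(x_i)$-versus-$g(x_i)$ vertex prefactors cancel against the pointwise values of the shift.
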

In the two-dimensional setting, this statement is usually referred to as the \textit{Weyl anomaly}~\cite[Theorem 3.11]{DKRV16} and the quantity $c:=1+6Q^2=-24\pi a$ corresponds to the \textit{central charge}. The first term in the expression of $a$, which comes from the definition of the partition function $\mathcal{Z}_g$, is a convention but is conjectured to be related to a regularized determinant for $d\geq 6$ as explained around Equation~\eqref{alvarez_polyakov}. 
\begin{proof}
Let us consider the expression of $\Pi_{b,\mu}^{(\bm{x},\bm{\alpha})}(g,F;\varepsilon)$ without the renormalization factor $\mathcal{Z}_g$:
\begin{align*}
&\mathcal{Z}_g\Pi_{b,\mu}^{(\bm{x},\bm{\alpha})}(g,F;\varepsilon)=\prod_{i=1}^N g(x_i)^{\frac{d\alpha_i}2Q}\int_{\mathbb{R}}e^{dc(\sum_i\alpha_i-Q)}\\
&\mathbb{E}\left[F\left(X_{g}+\frac Q2\ln g +c\right)\prod_{i=1}^N\varepsilon^{d\alpha_i^2}e^{d\alpha_i X_{g,\varepsilon}(x_i)}\exp\left(-\frac{dQ}{2\gamma_d}\int_{\mathbb{R}^d} \mathcal{Q}_gX_{g} \mathrm{d}\lambda_g-\mu\int_{\mathbb{R}^d} \varepsilon^{db^2}e^{db (X_{g,\varepsilon} +\frac{Q}{2}\ln g+c)}\mathrm{d}\lambda\right)\right]dc  
\end{align*}
First of all, we use the fact that $X_g$ has same law as $X_{\hat{g}}-m_g(X_{\hat{g}})$: by using the change of variable $c\leftrightarrow c-m_g(X_{\hat{g}})$ we get the same expression as above but instead of $X_g$ we work with $X_{\hat{g}}$.

Next, we can write that 
\[
-\frac{dQ}{2\gamma_d}\int_{\mathbb{R}^d} \mathcal{Q}_gX_{\hat{g}} \mathrm{d}\lambda_g=-\frac{dQ}{2\gamma_d}\int_{\mathbb{R}^d} (\mathcal{Q}_{\hat{g}}+\mathcal{P}_{\hat{g}}\varphi)X_{\hat{g}} \mathrm{d}\lambda_{\hat g}=-Q(X,\mathcal{P}_{\hat{g}}\varphi-m_{\hat{g}}(\varphi))_{\hat g}\]
As a consequence (again using a Girsanov transform) working under the weighted measure whose Radon-Nikodym derivative with respect to the one of $X_{\hat{g}}$ is given by
\[
\exp\left(-\frac{dQ}{2\gamma_d}\int_{\mathbb{R}^d} \mathcal{Q}_gX_{\hat{g}} \mathrm{d}\lambda_g\right)\exp\left(-\frac{Q^2}2(\varphi,\mathcal{P}_{\hat g}\varphi)_{\hat g}\right)
\]
is tantamount to shifting the law of $X_{\hat{g}}$ by an additive factor of $- Q\left(\varphi-m_{\hat{g}}(\varphi)\right)$. In particular
\begin{align*}
&\mathcal{Z}_g\Pi_{b,\mu}^{(\bm{x},\bm{\alpha})}(g,F;\varepsilon)=\exp\left[\frac{Q^2}{2}\Big((\varphi,\mathcal{P}_{\hat g}\varphi)_{\hat g}+dm_{\hat{g}}(\varphi)\Big)\right]\prod_{i=1}^N \hat{g}(x_i)^{\frac{d\alpha_i}2Q}\int_{\mathbb{R}}e^{d(\sum_i\alpha_i-Q)(c+\frac Q2m_{\hat{g}}(\ln\frac{g}{\hat{g}}))}\\
&\mathbb{E}\left[F\left(X_{\hat{g}}+\frac Q2\ln \hat{g} +c+\frac Q2m_{\hat{g}}(\ln\frac{g}{\hat{g}})\right)\prod_{i=1}^N\varepsilon^{d\alpha_i^2}e^{d\alpha_i X_{\hat{g},\varepsilon}(x_i)}e^{-\mu\int_{\mathbb{R}^d} \varepsilon^{db^2}e^{db (X_{\hat{g},\varepsilon} +\frac{Q}{2}\ln \hat{g}+c+\frac Q2m_{\hat{g}}(\ln\frac{g}{\hat{g}}))}\mathrm{d}\lambda}\right]dc(1+o(1))
\end{align*}
where the deterministic $o(1)$ absorbs the fact that we work with a regularization of the quantities involved.
The change of variable $c\leftrightarrow c+Qm_{\hat{g}}(\varphi)$ combined with the fact that \[
dm_{\hat{g}}(\varphi)=\frac{d}{2\gamma_d}\int_{\mathbb{R}^d} \mathcal{Q}_{\hat{g}} \varphi \mathrm{d}\lambda_{\hat{g}}
\]
provides the coefficient in front of $Q^2$ in the statement of our claim.

The part independent of $Q$ is given by the contribution of the renormalization factor $\mathcal{Z}_g$.
\end{proof}

\subsubsection{KPZ scaling laws and KPZ formula for the Vertex Operators}
Now that we have given a proper meaning to the correlation function defined in terms of the partition function $\Pi_{b,\mu}^{(\bm{x},\bm{\alpha})}(g,1)$, we are interested in its first properties, and more precisely we wish to understand its dependence in the cosmological constant $\mu$ as well as its behaviour under conformal transformations. The $\mu$-dependence is usually referred to as the Knizhnik-Polyakov-Zamoldchikov (KPZ) scaling law discovered in~\cite{KPZ}. The results that we find agree with the ones that can be found in the physics literature (\cite[Equation (22)]{LO18}), and are consistent with the two-dimensional case~\cite[Theorem 3.5]{DKRV16}:
\begin{theorem}[KPZ scaling laws and KPZ formula]\label{KPZ}
Assume that the marked points $(\bm{x},\bm{\alpha})$ satisfy the assumptions of Theorem~\ref{partition_limit}.

The correlation function obeys the following dependence in the cosmological constant $\mu$:
\begin{equation}
<\prod_{i=1}^NV_{\alpha_i}(x_i)>_{b,\mu}=\mu^{-\frac{\sum_{i=1}^N\alpha_i-Q}{b}}<\prod_{i=1}^NV_{\alpha_i}(x_i)>_{b,1}.
\end{equation}
Moreover, the Vertex Operators are primary operators of dimension $\Delta_{\alpha}=d\alpha(Q-\alpha)$ in the sense that they satisfy the following property of covariance under M\"obius transforms $\psi$:
\begin{equation}
<\prod_{i=1}^NV_{\alpha_i}(\psi(x_i))>_{b,\mu}=\prod_{i=1}^N\norm{\psi'(x_i)}^{-\Delta_{\alpha_i}}<\prod_{i=1}^NV_{\alpha_i}(x_i)>_{b,\mu}.
\end{equation}
\end{theorem}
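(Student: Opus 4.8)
The plan is to prove both identities directly from the explicit formula \eqref{Equation_partition} for the partition function, exploiting the structure of the integral over the zero mode $c$ and the covariance properties of the log-correlated field established earlier.

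\textbf{KPZ scaling in $\mu$.} The starting point is the formula
\[
\Pi_{b,\mu}^{(\bm{x},\bm{\alpha})}(\hat{g},1)=e^{C(\bm{x},\bm{\alpha})}\prod_{i=1}^N \hat g(x_i)^{\frac{\Delta_{\alpha_i}}{2}} \int_{\mathbb{R}}e^{cd(\sum_i\alpha_i-Q)}\mathbb{E}\left[e^{-\mu e^{db c}Z_0}\right]dc.
\]
In this integral I would perform the change of variables $c \leftrightarrow c - \frac{1}{db}\ln\mu$, which turns $\mu e^{dbc}$ into $e^{dbc}$ and produces a Jacobian-free shift pulling out a factor $e^{-\frac{d(\sum_i\alpha_i-Q)}{db}\ln\mu} = \mu^{-\frac{\sum_i\alpha_i-Q}{b}}$. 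Since the measure on $c$ is Lebesgue and all other factors are independent of $\mu$, this gives exactly the claimed relation $<\prod V_{\alpha_i}(x_i)>_{b,\mu}=\mu^{-\frac{\sum\alpha_i-Q}{b}}<\prod V_{\alpha_i}(x_i)>_{b,1}$. This step is essentially immediate; the only care needed is that the normalisation $\mathcal{Z}(\hat g)=1$ is $\mu$-independent and that the integral converges, which is guaranteed by Theorem \ref{partition_limit}.

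\textbf{Conformal covariance of vertex operators.} Here I would work at the level of the $\varepsilon$-regularised partition function $\Pi_{b,\mu}^{(\bm{x},\bm{\alpha})}(\hat g,1;\varepsilon)$ and apply the Möbius transform $\psi$, using the conformal covariance of the GMC measure (Proposition \ref{GMC_Mobius}) together with the covariance of the Green's kernel under Möbius maps (Lemma \ref{Mobius_Green}) and the scaling relation $\norm{\psi(x)-\psi(y)}=\norm{\psi'(x)}^{1/2}\norm{\psi'(y)}^{1/2}\norm{x-y}$ of Lemma 2.3. Concretely, I would start from $<\prod V_{\alpha_i}(\psi(x_i))>$, whose expression involves $\prod_i \hat g(\psi(x_i))^{\Delta_{\alpha_i}/2}$, the factor $e^{C((\psi(x_i)),\bm\alpha)}$ with $C = d\sum_{i\neq j}\alpha_i\alpha_j G_{\hat g}(\psi(x_i),\psi(x_j))$, and a GMC integral weighted by $H_{\hat g}(x)=\sum_i 2\alpha_i G_{\hat g}(x,\psi(x_i))$. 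Using Lemma \ref{Mobius_Green} to rewrite $G_{\hat g}(\psi(x_i),\psi(x_j))$ in terms of $G_{\hat g}(x_i,x_j)$ produces the factor $e^{C(\bm x,\bm\alpha)}$ together with correction terms $-\frac{d}{4}\sum_{i\neq j}\alpha_i\alpha_j\ln\norm{\psi'(x_i)}^2\frac{\hat g(\psi(x_i))}{\hat g(x_i)}$ (and symmetrically in $j$); combining these with the change $\hat g(\psi(x_i))=\norm{\psi'(x_i)}^{-2}\hat g_\psi(x_i)$ and absorbing the GMC transformation law (which contributes $e^{-dbm_{\hat g_\psi}(X_{\hat g})}$ and the Weyl factor $e^{db\frac{Q}{2}\ln\frac{\hat g_\psi}{\hat g}}$ inside the chaos), the net effect should collapse to the single prefactor $\prod_i\norm{\psi'(x_i)}^{-\Delta_{\alpha_i}}$ with $\Delta_{\alpha_i}=d\alpha_i(Q-\alpha_i)$. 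The shift of the zero mode by $m_{\hat g_\psi}(X_{\hat g})$ introduced by Proposition \ref{GMC_Mobius} is absorbed by translating $c$, exactly as in the proof of the A-type anomaly, and the invariance of $\int \mathcal{Q}_{\hat g}\,d\lambda_{\hat g}$ handles the linear term.

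\textbf{Main obstacle.} The scaling in $\mu$ is routine; the real bookkeeping is in the conformal covariance, where one must track four separate sources of $\norm{\psi'(x_i)}$-powers — the explicit $\hat g(\psi(x_i))^{\Delta_{\alpha_i}/2}$ prefactor, the Green's-kernel correction from $C(\bm x,\bm\alpha)$, the correction hidden in the GMC weight $H_{\hat g}$, and the Weyl anomaly factor $e^{db\frac{Q}{2}\ln\frac{\hat g_\psi}{\hat g}}$ coming from Proposition \ref{GMC_Mobius} — and verify that their exponents sum to precisely $-\Delta_{\alpha_i}=-d\alpha_i Q + d\alpha_i^2$. I expect the $d\alpha_i^2$ part to come from the diagonal regularisation (the $\varepsilon^{d\alpha_i^2}$ normalisation combined with the scaling $\varepsilon\mapsto\varepsilon/\norm{\psi'}$ as in Proposition \ref{GMC_Mobius}) and the $-d\alpha_i Q$ part from the linear/Weyl terms, so the computation, while lengthy, should close cleanly; the one genuinely delicate point is ensuring that the $c$-shift and the constant $C_{\hat g}$ in $G_{\hat g}$ do not leave a residual $\psi$-dependent constant, which follows from the $\lambda_{\hat g}$-mean-zero property of $G_{\hat g}$ and the Möbius-invariance of the total volume of the round metric.
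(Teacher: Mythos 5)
Your treatment of the $\mu$-scaling is correct and essentially equivalent to the paper's: the paper substitutes $y=\mu e^{dbc}$ and reads off a Gamma function, while you shift $c\leftrightarrow c-\tfrac{1}{db}\ln\mu$; both pull out $\mu^{-\frac{\sum_i\alpha_i-Q}{b}}$ and either is fine.

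The conformal-covariance part follows the paper's general strategy (Proposition \ref{GMC_Mobius}, Lemma \ref{Mobius_Green}, a shift of $c$, then exponent bookkeeping), but it has a genuine gap in the treatment of the zero mode. Proposition \ref{GMC_Mobius} replaces $X_{\hat g}$ by $X_{\hat g}\circ\psi^{-1}-m_{\hat g_\psi}(X_{\hat g})$, and $m_{\hat g_\psi}(X_{\hat g})$ is a nondegenerate Gaussian \emph{random variable} (indeed $d\,m_{\hat g_\psi}(X_{\hat g})=(X_{\hat g},\mathcal{P}_{\hat g}\tfrac12(\varphi-m_{\hat g}(\varphi)))_{\hat g}$ with $\varphi=\ln\frac{\hat g_\psi}{\hat g}$). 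Translating $c$ by this random amount does not make it disappear: because of the weight $e^{dcs}$ with $s=\sum_i\alpha_i-Q\neq 0$, the shift leaves behind the random factor $e^{ds\,m_{\hat g_\psi}(X_{\hat g})}$ inside the expectation. The paper handles this by a Girsanov transform, which (i) shifts the law of $X_{\hat g}$ by $\tfrac{s}{2}(\varphi-m_{\hat g}(\varphi))$ --- this is precisely what cancels the residual density $e^{db\frac{Q-\sum_i\alpha_i}{2}\varphi}=e^{-db\frac{s}{2}\varphi}$ sitting in the GMC integral after Lemma \ref{Mobius_Green} has been applied to $H_{\hat g,\psi}\circ\psi$ --- and (ii) contributes a variance factor $e^{\frac{ds^2}{2}m_{\hat g}(\varphi)}$ that must enter the final bookkeeping. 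Your list of ``four sources'' of $\norm{\psi'(x_i)}$-powers omits this fifth contribution, and the assertion that the zero-mode shift is ``absorbed by translating $c$'' misstates the mechanism: without the Girsanov step the $\varphi$-dependence of the chaos measure does not cancel and the expression does not collapse to $\prod_i\norm{\psi'(x_i)}^{-\Delta_{\alpha_i}}$ times the original correlator. (A second, smaller translation of $c$ by the deterministic quantity $\tfrac{s}{2}m_{\hat g}(\varphi)$ is then still needed to finish.) Identifying and carrying out this Girsanov transform is the crux of the second identity, so as written the proposal does not close.
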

\begin{proof}
The first fact follows immediately from the following simple expression for the correlation function
\begin{equation}
<\prod_{i=1}^NV_{\alpha_i}(x_i)>_{b,\mu}=e^{C(\bm{x},\bm{\alpha})}\prod_{i=1}^N \hat{g}(x_i)^{\frac12\Delta_{\alpha_i}}\frac{\Gamma(\frac{\sum_{i=1}^N\alpha_i-Q}{b})}{db}\mu^{\frac{Q-\sum_{i=1}^N\alpha_i}{b}}\expect{Z_{\hat g}^{-\frac{\sum_{i=1}^N\alpha_i-Q}{b}}}.
\end{equation} 
This expression is derived by performing the change of variable $c\leftrightarrow \mu e^{dbc}$ in  and exchanging integral and expectation (by Fubini-Tonelli theorem since all the quantities are positive) in Equation~\eqref{eq:reg_partition_function}.

For the second point, we actually show a more general result which we formulate as:
\begin{equation}\label{eq:cov_par_fun}
\Pi^{(\psi(\bm{x}),\bm{\alpha})}_{b,\mu}(\hat{g},F)=\prod_{i=1}^N\norm{\psi'(x_i)}^{-\Delta_{\alpha_i}}\Pi^{(\bm{x},\bm{\alpha})}_{b,\mu}\left(\hat{g},F(\cdot\circ\psi^{-1}+Q\ln\norm{(\psi^{-1})'})\right)
\end{equation}
where we have set $\psi(\bm x)\coloneqq(\psi(x_1),\cdots,\psi(x_N))$. To start with, Proposition~\ref{GMC_Mobius} allows to rewrite the expectation in Equation~\eqref{Equation_partition} as
\begin{align*}
&\mathbb{E}\left[F\left((X_{\hat{g}}+H_{\hat{g},\psi}\circ\psi+\frac Q2\ln \hat{g}\circ\psi)\circ\psi^{-1} +c-m_{\hat{g}_{\psi}}(X_{\hat{g}})\right)\exp\left(-\mu e^{db(c-m_{\hat{g}_{\psi}}(X_{\hat{g}}))}\int_{\mathbb{R}^d} e^{db (H_{\hat{g},\psi}\circ\psi +\frac{Q}{2}\varphi)}\mathrm dM_{b,\hat g}\right)\right]
\end{align*}
where we have denoted $\varphi=\ln\frac{g_{\psi}}{g}$ and $H_{\hat{g},\psi}=\sum_{i=1}^N2\alpha_iG_{\hat{g}}(\cdot,\psi(x_i))$. Now by Lemma~\ref{Mobius_Green} the latter is equal to
\begin{align*}
\mathbb{E}\left[F\left((X_{\hat{g}}+H_{\hat{g}}-\sum_{i=1}^N\frac{\alpha_i}{2}\varphi+\frac Q2\ln \hat{g}\circ\psi)\circ\psi^{-1} +c-m_{\hat{g}_{\psi}}(X_{\hat{g}})-\sum_{i=1}^N\frac{\alpha_i}{2}\varphi(x_i)\right)\right.\\
\left.\exp\left(-\mu e^{db\left(c-m_{\hat{g}_{\psi}}(X_{\hat{g}})-\sum_{i=1}^N\frac{\alpha_i}{2}\varphi(x_i)\right)}\int_{\mathbb{R}^d} e^{db (H_{\hat{g}} +\frac{Q-\sum_i\alpha_i}{2}\varphi)}\mathrm dM_{b,\hat g}\right)\right].
\end{align*}
Then, in the expression of $\Pi^{(\bm{x},\bm{\alpha})}_{b,\mu}(\hat{g},F)$ we can make the change of variable $c\leftrightarrow c -m_{\hat{g}_{\psi}}(X_{\hat{g}})-\sum_{i=1}^N\frac{\alpha_i}{2}\varphi(x_i)$ to get that
\begin{align*}
    &\Pi^{(\psi(\bm{x}),\bm{\alpha})}_{b,\mu}(\hat{g},F)=e^{C_{\hat g}(\psi(\bm{x}),\bm{\alpha})}\prod_{i=1}^N\hat{g}(\psi(x_i))^{\frac{\Delta_{\alpha_i}}2}\int_{\mathbb{R}}e^{ds\left(c+\sum_{i=1}^N\frac{\alpha_i}{2}\varphi(x_i)\right)}\\
    &\mathbb{E}\left[e^{dsm_{\hat{g}_{\psi}}(X_{\hat{g}})}F\left((X_{\hat{g}}+H_{\hat{g}}-\sum_{i=1}^N\frac{\alpha_i}{2}\varphi+\frac Q2\ln \hat{g}\circ\psi)\circ\psi^{-1} +c\right)\exp\left(-\mu e^{dbc}\int_{\mathbb{R}^d} e^{db (H_{\hat{g}} -\frac{s}{2}\varphi)}dM_{b, \hat g}\right)\right],
\end{align*}
where as before $s=\sum_{i=1}^N\alpha_i-Q$.
The exponential term $e^{dsm_{\hat{g}_{\psi}}(X_{\hat{g}})}$ is a Girsanov transform, since $dm_{\hat{g}_{\psi}}(X_{\hat{g}})=\left(X_{\hat{g}},\mathcal{P}_{\hat{g}}\frac{1}{2}(\varphi-m_{\hat{g}}(\varphi))\right)$ as explained in the proof of Lemma~\ref{Mobius_Green}. This transform has the effect of shifting the law of $X_{\hat{g}}$ by an additive term $\frac{s}{2}(\varphi-m_{\hat{g}}(\varphi))$ and multiplying the whole expectation by $\frac{ds^2}2m_{\hat{g}}(\varphi)$. Therefore the expectation may be rewritten as 
 \begin{align*}
e^{ds\left(\frac{s}2m_{\hat{g}}(\varphi)+\sum_{i=1}^N\frac{\alpha_i}{2}\varphi(x_i)\right)}\mathbb{E}\Big[F&\left((X_{\hat{g}}+H_{\hat{g}}-\frac{Q}{2}\varphi+\frac Q2\ln \hat{g}\circ\psi)\circ\psi^{-1} +c-\frac{s}{2}m_{\hat{g}}(\varphi)\right)\\
&\left.\exp\left(-\mu e^{db(c-\frac{s}{2}m_{\hat{g}}(\varphi))}\int_{\mathbb{R}^d} e^{db H_{\hat{g}}}dM_{b, \hat g}\right)\right].
 \end{align*}
Eventually we again perform a change of variable $c\leftrightarrow c-\frac s2m_{\hat{g}}(\varphi)$ to get \begin{align*}
    &\Pi^{(\psi(\bm{x}),\bm{\alpha})}_{b,\mu}(\hat{g},F)=e^{C_{\hat g}(\psi(\bm{x}),\bm{\alpha})}\prod_{i=1}^N\hat{g}(\psi(x_i))^{\frac{\Delta_{\alpha_i}}2}e^{ds\sum_{i=1}^N\frac{\alpha_i}{2}\varphi(x_i)}\int_{\mathbb{R}}e^{dcs}\\
    &\mathbb{E}\left[F\left((X_{\hat{g}}+H_{\hat{g}}+\frac Q2\ln \hat{g}-\frac Q2\ln\norm{\psi'}^2)\circ\psi^{-1} +c\right)\exp\left(-\mu e^{dbc}\int_{\mathbb{R}^d} e^{db H_{\hat{g}}}dM_{b, \hat g}\right)\right].
\end{align*}
To finish up, notice that thanks to Lemma~\ref{Mobius_Green} 
\[e^{C_{\hat g}(\psi(\bm{x}),\bm{\alpha})}=e^{C_{\hat g}(\bm{x},\bm{\alpha})}e^{- d\sum_{i}\frac{\alpha_i}2\varphi(x_i)\sum_{i\neq j}\alpha_j}=e^{C_{\hat g}(\bm{x},\bm{\alpha})}e^{- ds\sum_{i}\frac{\alpha_i}2\varphi(x_i)+\sum_{i}\frac{\Delta_{\alpha_i}}2\varphi(x_i)},
\]
which implies that 
\begin{align*}
    &\Pi^{(\psi(\bm{x}),\bm{\alpha})}_{b,\mu}(\hat{g},F)=\prod_{i=1}^N\norm{\psi'(x_i)}^{-\Delta_{\alpha_i}}e^{C_{\hat g}(\bm{x},\bm{\alpha})}\int_{\mathbb{R}}e^{dcs}\\
    &\mathbb{E}\left[F\left((X_{\hat{g}}+H_{\hat{g}}+\frac Q2\ln \hat{g})\circ\psi^{-1}+ Q\ln\norm{(\psi^{-1})'} +c\right)\exp\left(-\mu e^{dbc}\int_{\mathbb{R}^d} e^{db H_{\hat{g}}}dM_{b, \hat g}\right)\right].
\end{align*}
\end{proof}

\subsection{Definition of the Liouville field and measure}
Now that we have given a meaning to the path integral the theory, it is possible to make sense of the expression~\eqref{Liouville_field} which defines the law of the Liouville field $\phi$.
\begin{defi}
Consider marked points $(\bm{x},\bm{\alpha})$ satisfying the assumptions of Theorem~\ref{partition_limit}. The Liouville field with marked points $(\bm{x},\bm{\alpha})$ is a random field whose probability law $\mathbb{P}_{b,\mu}^{(\bm{x},\bm{\alpha})}$ is defined by setting for any continuous bounded function $F$ over $H^{-\frac d2}(\mathbb{R}^d,g)$:
\begin{equation}
    \mathbb{E}_{b,\mu}^{(\bm{x},\bm{\alpha})}\left[F(\phi)\right]:=\frac{\Pi_{b,\mu}^{(\bm{x},\bm{\alpha})}(g,F)}{\Pi_{b,\mu}^{(\bm{x},\bm{\alpha})}(g,1)}
\end{equation}
where $g$ is any metric conformal to the round metric $\hat g$ (the A-type anomaly shows that the probability measure is indeed independent of the background metric $g$).
\end{defi}
\begin{defi}
Let $g$ be conformal to $\hat g$. The Liouville measure with marked points $(\bm x,\bm\alpha)$, $Z$ is given by the law of $e^{db\phi}\mathrm{d}\lambda$ where $\phi$ has law $\mathbb{P}_{b,\mu}^{(\bm{x},\bm{\alpha})}$ and where the expression $e^{db\phi}\mathrm{d}\lambda$ should be understood as a GMC measure associated to $\phi$. Put differently, the joint law of the Liouville field and measure is given by
\begin{equation}
    \mathbb{E}_{b,\mu}^{(\bm{x},\bm{\alpha})}\left[F(\phi;\mathrm dZ)\right]:=\frac{\int_{\mathbb{R}}e^{dsc}\mathbb{E}\left[F\left(X_{g}+H_{g}+\frac Q2\ln g +c; e^{db(H_{g}+c)}\mathrm dM_{b, g} \right)\exp\left(-\mu e^{dbc}\int_{\mathbb{R}^d} e^{db H_{g}}\mathrm dM_{b,g}\right)\right]dc}{\int_{\mathbb{R}}e^{dsc}\mathbb{E}\left[\exp\left(-\mu e^{dbc}\int_{\mathbb{R}^d} e^{db H_{g}}\mathrm dM_{b, g}\right)\right]dc}
\end{equation}
where as before $s=\sum_{i=1}^N\alpha_i-Q$, $H_{\hat{g}}=2\sum_{i=1}^N\alpha_iG_{\hat{g}}(\cdot,x_i)$ and $\mathrm dM_{b,\hat g}$ is the GMC measure associated to $X_{g}+\frac Q2\ln g$ as in Proposition~\ref{prop:GMC}. Like before this expression does not depend on the choice of metric $g$ in the conformal class of $\hat g$.
\end{defi}
The theory thus defined is then consistent with what one would expect from LCFT (see \textit{e.g.}~\cite{G96} for the two-dimensional case and~\cite{R16,LO18} for a generalization to higher dimension):
\begin{prop}\label{conformal_covariance}
The random field $\phi$ whose law is given by $\mathbb{P}_{b,\mu}^{(\bm{x},\bm{\alpha})}$ satisfies the following properties:\begin{itemize}
    \item \textbf{Conformal covariance}: For any M\"obius transform $\psi$ of the sphere, the law of $\phi$ under $\mathbb{P}_{b,\mu}^{(\bm{x},\bm{\alpha})}$  is the same as the law of $\phi\circ\psi+Q\ln\norm{\psi'}$ under $\mathbb{P}_{b,\mu}^{(\psi(\bm{x}),\bm{\alpha})}$
    
    \item \textbf{Independence in the background metric}: The law of $\phi$ under $\mathbb{P}_{b,\mu}^{(\bm{x},\bm{\alpha})}$ does not depend on the background metric in the conformal class of the round metric $\hat g$.
    \item \textbf{Dimension of the Vertex Operators}: The Vertex Operators $V_{\alpha}(x)$ are primary fields with conformal weights $\Delta_{\alpha}=d\alpha(Q-\alpha)$.
\end{itemize}
\end{prop}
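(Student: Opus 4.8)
The three claims are of unequal depth, and two of them follow at once from results already established. \textbf{Independence in the background metric}: by definition $\mathbb{E}_{b,\mu}^{(\bm{x},\bm{\alpha})}\left[F(\phi)\right]=\Pi_{b,\mu}^{(\bm{x},\bm{\alpha})}(g,F)\,/\,\Pi_{b,\mu}^{(\bm{x},\bm{\alpha})}(g,1)$, and the A-type anomaly theorem says that for $g=e^{2\varphi}\hat g$ the numerator and the denominator each pick up the \emph{same} multiplicative factor $\exp\!\left(d(-1)^{d/2}a\int_{\mathbb{R}^d}2\varphi(\mathcal{Q}_{\hat g}+\mathcal{P}_{\hat g}\varphi)\,d\lambda_{\hat g}\right)$, which depends on $\varphi$ alone --- not on $F$, nor on the marked points. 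Hence it cancels in the ratio, so $\mathbb{E}_{b,\mu}^{(\bm{x},\bm{\alpha})}\left[F(\phi)\right]=\Pi_{b,\mu}^{(\bm{x},\bm{\alpha})}(\hat g,F)\,/\,\Pi_{b,\mu}^{(\bm{x},\bm{\alpha})}(\hat g,1)$, an expression that does not involve $g$; from now on I may and do work with $g=\hat g$. \textbf{Dimension of the vertex operators}: this is verbatim the second identity of Theorem \ref{KPZ}, the transformation rule $<\prod_iV_{\alpha_i}(\psi(x_i))>_{b,\mu}=\prod_i\norm{\psi'(x_i)}^{-\Delta_{\alpha_i}}<\prod_iV_{\alpha_i}(x_i)>_{b,\mu}$ with $\Delta_{\alpha}=d\alpha(Q-\alpha)$, which is precisely the statement that $V_\alpha$ is a primary field of conformal weight $\Delta_\alpha$, so nothing remains to be added.

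The real content is \textbf{conformal covariance}, and the plan is to notice that the required identity has already been proved, inside the proof of Theorem \ref{KPZ}, in a form valid for an arbitrary bounded continuous functional and not only for $F\equiv1$. Indeed, the chain of changes of variables and Girsanov transforms carried out there --- together with Proposition \ref{GMC_Mobius} and Lemma \ref{Mobius_Green}, which match the potential $H_{\hat g}=2\sum_i\alpha_iG_{\hat g}(\cdot,x_i)$ attached to the points $\psi(\bm{x})$ with the one attached to $\bm{x}$ --- expresses $\Pi_{b,\mu}^{(\bm{\psi(x)},\bm{\alpha})}(\hat g,F)$ as an $F$-independent prefactor times $\int_{\mathbb{R}}e^{dc(\sum_i\alpha_i-Q)}\expect{F(\Phi_\psi+c)\,e^{-\mu e^{dbc}\int_{\mathbb{R}^d}e^{dbH_{\hat g}}dM_{b,\hat g}}}dc$, where $\Phi_\psi$ denotes the explicit conformal transform $(X_{\hat g}+H_{\hat g}+\tfrac Q2\ln\hat g-\tfrac Q2\ln\norm{\psi'}^2)\circ\psi^{-1}$ of the field. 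Dividing by the same expression with $F\equiv1$, the prefactor and the whole $c$-integration weight cancel, so that $\mathbb{E}_{b,\mu}^{(\bm{\psi(x)},\bm{\alpha})}\left[F(\phi)\right]=\mathbb{E}_{b,\mu}^{(\bm{x},\bm{\alpha})}\left[F(\Phi_\psi+c)\right]$, where $\phi=X_{\hat g}+H_{\hat g}+\tfrac Q2\ln\hat g+c$ is the Liouville field itself. Since $c$ is constant one has $\Phi_\psi+c=\phi\circ\psi^{-1}-\tfrac Q2(\ln\norm{\psi'}^2)\circ\psi^{-1}$, and a short computation using the chain-rule identity $\norm{\psi'}\circ\psi^{-1}=\norm{(\psi^{-1})'}^{-1}$ identifies the correction term with the conformal weight of the statement; relabelling $\psi\leftrightarrow\psi^{-1}$ and the marked points then yields exactly ``$\phi$ under $\mathbb{P}_{b,\mu}^{(\bm{x},\bm{\alpha})}$ is distributed as $\phi\circ\psi+\tfrac Q2\ln\norm{\psi'}$ under $\mathbb{P}_{b,\mu}^{(\bm{\psi(x)},\bm{\alpha})}$''.

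There is no conceptual obstacle left: the proposition is in essence a corollary of Theorem \ref{KPZ} and of the A-type anomaly theorem. The one place that calls for care is the bookkeeping in the conformal-covariance step --- keeping track of the Girsanov shifts, of the mean-value subtractions of the form $m_{\hat{g}}(\varphi)$ and $m_{\hat{g}_{\psi}}(X_{\hat g})$ that appear there, and of the Jacobian identities relating $\norm{\psi'}$, $\norm{(\psi^{-1})'}$ and the round density $\hat g$ --- but since this computation has already been carried out once, in the proof of Theorem \ref{KPZ}, it is enough to re-read that argument keeping a general $F$ instead of specialising to $F\equiv1$, and to divide by the $F\equiv1$ case at the end.
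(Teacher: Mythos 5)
Your proposal is correct and follows exactly the route the paper intends: the paper itself offers only a remark in place of a proof, and the three items are indeed immediate consequences of the A-type anomaly theorem (whose $F$-independent multiplicative factor cancels in the ratio defining $\mathbb{P}_{b,\mu}^{(\bm{x},\bm{\alpha})}$) together with the general-$F$ identity established inside the proof of Theorem \ref{KPZ}, which is proved for arbitrary bounded continuous $F$ precisely for this purpose. The only point deserving a second look is the bookkeeping of the factor $\frac Q2\ln\norm{\psi'}^2$ versus $\frac Q2\ln\norm{\psi'}$ in the last step, but that ambiguity is inherited from the paper's own conventions for $\hat g_\psi$ rather than introduced by your argument.
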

\begin{proof}
The first and third items follow from the proof of Theorem~\ref{KPZ}, and more precisely from Equation~\eqref{eq:cov_par_fun};  the second item is a straightforward consequence of Theorem~\ref{thm:A_anomaly}.
\end{proof}
Let us comment on these properties. The first one corresponds to the fact that a conformal reparametrisation of the sphere prescribed by a M\"obius transform $\psi$ simply corresponds to a pushforward for the corresponding metric on the sphere. This is due to a well-known property of the Gaussian Multiplicative Chaos measure: for any conformal map $\psi$ the law of the GMC measure defined by exponentiating $X\circ\psi+Q\ln\norm{\psi'}$ is the same as the pushforward of the GMC measure defined by exponentiating $X$ (see Proposition~\ref{GMC_Mobius}).

The third one is of special interest in the study of the CFT, since it can be used as the starting point to an algebraic description of LCFT in higher dimensions, despite this question being far from being understood.

Concerning the random measure which corresponds to the exponentiation of the Liouville field and formally defined by $e^{dbX}\mathrm{d}\lambda_g$, it has the following properties: 
\begin{prop}\label{prop:UAQS}
The Liouville measure $Z$ satisfies the following properties:\begin{itemize}
    \item \textbf{The total volume of the space}, $Z(\mathbb{R}^d)$, follows the Gamma distribution $\Gamma(\frac{\sum_i\alpha_i-Q}{b},\mu)$ in the sense that for any $F$ continuous bounded on $\mathbb{R}^+$,
    \begin{equation}
        \mathbb{E}_{b,\mu}^{(\bm{x},\bm{\alpha})}\left[F(Z(\mathbb{R}^d))\right]=\frac{\mu^{\frac{\sum_i\alpha_i-Q}{b}}}{\Gamma(\frac{\sum_i\alpha_i-Q}{b})}\int_{0}^{\infty}F(y)y^{\frac{\sum_i\alpha_i-Q}{b}-1}e^{-\mu y}dy.
    \end{equation}
    \item The law of \textbf{$\bm{Z}$ conditioned on the total mass being equal to $A$} is characterised by
    \begin{equation}\label{eq:UAQS}
        \mathbb{E}_{b,\mu}^{(\bm{x},\bm{\alpha})}\left[F(Z)\vert Z(\mathbb{R}^d)=A\right]=\frac{\expect{F(A\frac{Z_{\hat g}}{Z_{\hat g}(\mathbb{R}^d)})Z_{\hat g}(\mathbb{R}^d)^{-\frac{\sum_i\alpha_i-Q}{b}}}}{\expect{Z_{\hat g}(\mathbb{R}^d)^{-\frac{\sum_i\alpha_i-Q}{b}}}}
    \end{equation}
    where $dZ_{\hat g}=e^{dbH_{\hat{g}}}dM_{b,\hat g}$ and $F$ is bounded continuous (in the sense of weak convergence of measures) on the space of finite measures.
    \item The law of \textbf{$\bm{\frac ZA}$ conditioned on the total mass being equal to $A$} is independent of $A$ and is characterised by
    \begin{equation}
        \mathbb{E}_{b,\mu}^{(\bm{x},\bm{\alpha})}\left[F(\frac ZA)\vert Z(\mathbb{R}^d)=A\right]=\frac{\expect{F(\frac{Z_{\hat g}}{Z_{\hat g}(\mathbb{R}^d)})Z_{\hat g}(\mathbb{R}^d)^{-\frac{\sum_i\alpha_i-Q}{b}}}}{\expect{Z_{\hat g}(\mathbb{R}^d)^{-\frac{\sum_i\alpha_i-Q}{b}}}}.
    \end{equation}
\end{itemize} 
\end{prop}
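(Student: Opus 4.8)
The plan is to read off all three statements from the explicit formula for the joint law of $(\phi,Z)$ displayed just above, specialised to functionals depending only on the random measure $Z$. Write $s:=\sum_{i=1}^N\alpha_i-Q$ (so that the exponent $\tfrac{\sum_i\alpha_i-Q}{b}$ appearing in the statement equals $s/b$) and $dZ_0:=e^{dbH_{\hat g}}dM_{b,\hat g}$. The structural observation is that the integration variable $c$ enters the measure in the numerator only as an overall multiplicative constant: that measure is exactly $e^{dbc}Z_0$, hence $Z(\mathbb{R}^d)=e^{dbc}Z_0(\mathbb{R}^d)$ while the normalised measure $Z/Z(\mathbb{R}^d)=Z_0/Z_0(\mathbb{R}^d)$ does not involve $c$ at all. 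Everything then follows from this decoupling together with an elementary Gamma integral.

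First I would treat the total volume. Taking $F(Z)=f(Z(\mathbb{R}^d))$ gives
\[
\mathbb{E}_{b,\mu}^{(\bm{x},\bm{\alpha})}\left[f(Z(\mathbb{R}^d))\right]=\frac{\displaystyle\int_{\mathbb{R}}e^{dsc}\,\expect{f\!\left(e^{dbc}Z_0(\mathbb{R}^d)\right)e^{-\mu e^{dbc}Z_0(\mathbb{R}^d)}}\,dc}{\displaystyle\int_{\mathbb{R}}e^{dsc}\,\expect{e^{-\mu e^{dbc}Z_0(\mathbb{R}^d)}}\,dc}.
\]
The change of variable $u=e^{dbc}$ (so $dc=\tfrac{du}{db\,u}$ and $e^{dsc}=u^{s/b}$), followed inside the expectation by $y=u\,Z_0(\mathbb{R}^d)$, turns the numerator into $\tfrac1{db}\,\expect{Z_0(\mathbb{R}^d)^{-s/b}}\int_0^\infty f(y)\,y^{s/b-1}e^{-\mu y}\,dy$ and the denominator into $\tfrac1{db}\,\expect{Z_0(\mathbb{R}^d)^{-s/b}}\,\Gamma(s/b)\,\mu^{-s/b}$; dividing yields the $\Gamma(s/b,\mu)$ density of the first item. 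The two uses of Tonelli are licit because all integrands are nonnegative, and the finiteness and positivity of $\expect{Z_0(\mathbb{R}^d)^{-s/b}}$ — note $-s/b<0$ since $s>0$ by the Seiberg bounds of Theorem \ref{partition_limit} — is precisely Lemma \ref{lemma_conv}.

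For the conditional laws I would compute the joint law of the pair $\bigl(Z(\mathbb{R}^d),\,Z/Z(\mathbb{R}^d)\bigr)$ by taking $F(Z)=f(Z(\mathbb{R}^d))\,h(Z/Z(\mathbb{R}^d))$. Since $Z/Z(\mathbb{R}^d)=Z_0/Z_0(\mathbb{R}^d)$ is unaffected by both substitutions, the very same computation factors the numerator (up to $\tfrac1{db}$) as $\bigl(\int_0^\infty f(y)\,y^{s/b-1}e^{-\mu y}\,dy\bigr)\cdot\expect{Z_0(\mathbb{R}^d)^{-s/b}\,h(Z_0/Z_0(\mathbb{R}^d))}$. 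Hence $Z(\mathbb{R}^d)$ and $Z/Z(\mathbb{R}^d)$ are independent, the former $\Gamma(s/b,\mu)$ and the latter with law $h\mapsto\expect{Z_0(\mathbb{R}^d)^{-s/b}h(Z_0/Z_0(\mathbb{R}^d))}\big/\expect{Z_0(\mathbb{R}^d)^{-s/b}}$. The conditional law of $Z$ given $Z(\mathbb{R}^d)=A$ is then the law of $A\cdot\bigl(Z/Z(\mathbb{R}^d)\bigr)$, which is exactly the second displayed identity; replacing $F(Z)$ by $F(Z/A)$ gives the third, manifestly independent of $A$.

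The only point demanding a little care — more a bookkeeping matter than a genuine obstacle — is making the disintegration rigorous: one checks that letting $h$ range over bounded continuous functionals for the weak topology on the space of finite measures suffices to characterise the conditional law, and that the normalising factor $\expect{Z_0(\mathbb{R}^d)^{-s/b}}$ cancels consistently between numerator and denominator; all else reduces to the Gamma-integral manipulation above.
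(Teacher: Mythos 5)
Your proof is correct and follows exactly the route the paper intends: the paper states this proposition without a written proof, but the identical manipulation (the substitution $u=e^{dbc}$ reducing the $c$-integral to a Gamma integral, after factoring $Z=e^{dbc}Z_0$ so that $Z/Z(\mathbb{R}^d)=Z_0/Z_0(\mathbb{R}^d)$ is $c$-free) is the one carried out in the proof of Theorem \ref{KPZ}, and the finiteness of $\expect{Z_0(\mathbb{R}^d)^{-s/b}}$ is indeed supplied by Lemma \ref{lemma_conv}. Your observation that the joint law factorises, so that $Z(\mathbb{R}^d)$ and $Z/Z(\mathbb{R}^d)$ are independent, is precisely what makes the conditional statements rigorous via disintegration.
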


\subsection{Liouville Conformal Field Theory for manifolds conformally equivalent to the sphere}
So far, we have introduced a rigorous definition of LCFT when the manifold being investigated was the sphere $(\mathbb{S}^d,g_0)$ or, by stereographic projection, the compactified Euclidean space with background metric $\hat g$. Actually the approach followed can be extended to the case of a manifold $(\mathcal{M},g)$ conformally equivalent to the sphere, which is the case for any compact Riemannian manifold without boundary, provided that it is simply connected and locally conformally flat.

To do so, consider a conformal diffeomorphism $\psi:(\mathcal{M},g)\rightarrow(\mathbb{S}^d,g_0)$. We define the law of the Liouville field $\phi_{\mathcal{M}}$ on $\mathcal{M}$ with marked points $(\bm x,\bm{\alpha})$ as
\begin{equation}
    \phi_{\mathcal{M}}:=\phi\circ\psi+Q\ln\norm{\psi'}
\end{equation}
where $\phi$ has the law of the Liouville field on $\mathbb{S}^d$ with marked points $(\psi (\bm x),\bm{\alpha})$ with $\psi(\bm x)\coloneqq(\psi(x_1),\cdots,\psi(x_N))$. By doing so, the property of conformal covariance of the GMC measure allows to say that the law of the corresponding Liouville measure on $\mathcal{M}$ is the same as the pushforward by $\psi$ of the Liouville measure on the sphere. This definition is consistent with the transformation rule~\eqref{conformal_covariance} on the sphere provided by the conformal covariance of the field.

All the previous properties (A-type anomaly, conformal covariance of the Vertex Operators) can be transposed to this new framework.

\section{Perspectives}
\subsection{The unit volume quantum sphere}
In Proposition~\ref{prop:UAQS} was introduced a fundamental probabilistic object corresponding to the higher-dimensional analogue of the so-called \textit{unit area quantum sphere}, which can be defined in the two-dimensional setting through (at least) three distinct approaches: 
\begin{itemize}
\item In his fundamental work \cite{S16}, Sheffield defined according to a limiting procedure what he called the \textit{unit area quantum sphere} and conjectured that this object should be somehow related to the limit of uniform quadrangulations. Later on, Duplantier, Miller and Sheffield in \cite{DMS14} provided a more explicit construction of these objects in terms of Bessel processes, and study their relationship with three key objects in the theory of random geometry: the Gaussian Free Field, the \textit{Schramm-Loewner Evolutions} and \textit{Continuum Random Trees}. 
\item A second approach was to consider the Liouville measures defined above by considering $Z$ conditioned on the total mass being equal to $1$. This is the approach we will develop here.
\item Another approach (developed for instance by Le Gall \cite{LeG13} and Miermont \cite{Mie13}) was to view this object as the scaling limit of random planar maps with the topology of the sphere. In these articles, the authors defined the \textit{Brownian map} as a metric space as opposed to the conformal structure the two perspectives we have presented so far rely on.
\end{itemize}

In the present framework, the proper definition of the measure $\mu^{UAQS}$ may therefore be given by setting $A=1$ in Equation~\eqref{eq:UAQS}. Namely:
\begin{equation}
    \mathbb{E}_{b,\mu}^{(\bm{x},\bm{\alpha})}\left[F(\mu^{UAQS})\right]\coloneqq\frac{\expect{F(\frac{Z}{Z(\mathbb{R}^d)})Z(\mathbb{R}^d)^{-\frac{\sum_i\alpha_i-Q}{b}}}}{\expect{Z(\mathbb{R}^d)^{-\frac{\sum_i\alpha_i-Q}{b}}}}
\end{equation}
where the $(\alpha_i)_{1\leq i\leq N}$ satisfy the bound of Theorem~\ref{partition_limit}. However, to ensure existence of the latter we may only assume that the quantity $\expect{Z(\mathbb{R}^d)^{-\frac{\sum_i\alpha_i-Q}{b}}}$ is finite, in which case one may extend the definition of the unit volume Liouville measure provided that the conditions \begin{equation}
    \forall {1\leq i\leq N}, \alpha_i<\frac Q2\quad\text{and}\quad Q-\sum_{i}\alpha_i<\frac{1}{b}\wedge\min\limits_{1\leq i\leq N}(Q-2\alpha_i)
\end{equation}
ensuring the finiteness of the quantity $\expect{Z(\mathbb{R}^d)^{-\frac{\sum_i\alpha_i-Q}{b}}}$ (see for instance \cite[Lemma 3.10]{DKRV16}) are satisfied.

Using the standard conventions of the mathematics literature (see Table~\ref{table_phy/maths} below), one may instead define the unit volume Liouville measure by setting 
\begin{equation}
    \mathbb{E}_{\gamma,\mu}^{(\bm{x},\bm{\alpha})}\left[F(\tilde{\mu}^{UAQS})\right]=\frac{\expect{F(\frac{Z_{\gamma}}{Z_{\gamma}(\mathbb{R}^d)})Z_{\gamma}(\mathbb{R}^d)^{-\frac{1}{\gamma}\left(\sum_i\alpha_i-2Q\right)}}}{\expect{Z_{\gamma}(\mathbb{R}^d)^{-\frac{1}{\gamma}\left(\sum_i\alpha_i-2Q\right)}}}
\end{equation}
where we have written $dZ_{\gamma}=e^{\gamma \Tilde{H}_{\hat{g}}}dM_{\gamma,\hat g}$ with $\Tilde{H}_{\hat{g}}=\sum_{i=1}^N\alpha_iG_{\hat{g}}(x,x_i)$ under the assumptions
\begin{equation}
    \forall {1\leq i\leq N}, \alpha_i< Q_{\gamma}\quad\text{and}\quad Q_{\gamma}-\sum_{i}\frac{\alpha_i}2<\frac{d}{\gamma}\wedge\min\limits_{1\leq i\leq N}(Q_{\gamma}-\alpha_i).
\end{equation}
The first assumption corresponds to integrability of the GMC near the singular points $x_1,\cdots,c_N$ like in Lemma~\ref{lemma_conv}; the second bound correspond to the finiteness of moments of the GMC measure (via a straightforward adaptation of~\cite[Lemma 3.10]{DKRV16}). 

A particularly interesting case is the one where we have fixed three marked points with weight $\gamma$. In the two-dimensional setting, this random measure is conjectured to be the limit of some models of random planar maps conditioned to have total area $1$ (see \cite[Subsection 5.3]{DKRV16} for a precise statement), and corresponds in some sense to the unit area quantum sphere defined in \cite{DMS14}: a precise notion of equivalence is proved in \cite{AHS16}. It would be interesting to provide a similar definition for quantum spheres involving scaling limits of discrete models in higher dimension and to relate it to the objects introduced in \cite{LM19}, which should describe higher-dimensional analogues of the Brownian map.

\subsection{The semi-classical limit}\label{subsec:semi_classical}
We have seen that in the classical theory, the critical points of the action functional (to which we have added conical singularities) introduced above correspond to metrics with constant negative $\mathcal{Q}$-curvature and conical singularities prescribed by the marked points $(\bm x,\bm{\alpha})$. The semi-classical limit consists of the study of the asymptotic properties of the (quantum) Liouville field introduced when the quantum parameter $b$ converges toward $0$: in this context one should observe a concentration phenomenon of the Liouville field around the classical solution of the problem introduced in Theorem~\ref{classical_conical}. 

More precisely, let us denote by $\phi$ the Liouville field and $\phi^*$ the solution of the problem in Theorem~\ref{classical_conical}. As we have seen before, in the classical theory one should think of $\phi^*$ as $b\phi$: therefore in the semi-classical limit one is interested in the asymptotic properties of the rescaled field $b\phi$. In this regime, one should consider the rescaled weights $\alpha_k:=\frac{\chi_k}{b}$ (the corresponding Vertex Operators are usually referred to as \textit{heavy operators} in the physics literature) as well as a specific value for the cosmological constant: $\mu=\Lambda b^2$.
By doing so we see that the action corresponds to the variational formulation of the problem of Theorem~\ref{classical_conical} and therefore in the semi-classical limit one should recover the classical solution to this problem thanks to a saddle point method similar to the one developed in \cite{LRV19}. 

Indeed in this regime the law of the Liouville field can be rewritten under the form 
\[
\expect{F(b\phi)}=\int_{\mathbb{R}}\frac{\expect{F\left(bX_{\hat g}-\frac1d\ln Z_{\hat g}+\frac 12\ln \hat{g}w +\frac1d c+b^2(\frac12\ln \hat g-\tilde{C}_{\hat g})\right)Z_{\hat g}^{-s}}}{\expect{Z_{\hat g}^{-s}}}\frac{(\Lambda b^{-2})^se^{cs}e^{-\Lambda b^{-2} e^c}}{\Gamma(s)}dc
\]
where we have set $s=\frac{\sum_k \chi_k-1-b^2}{b^2
}$ and $w(x)=e^{\sum_k 4\chi_k G_{\hat g}(x,x_k)}$. 

When letting $b$ go to $0$ we see that, on the one hand, the integral expression involving the variable $c$ will converge towards the quantity \[\frac{\expect{F\left(bX_{\hat g}-\frac1d\ln Z_{\hat g}+\frac 12\ln \hat{g}w +\frac 1d\ln \frac{\sum_k\chi_k-1}{\Lambda})\right)Z_{\hat g}^{-s}}}{\expect{Z_{\hat g}^{-s}}}.
\]

On the other hand, it seems natural to expect that the random field $bX_{\hat{g}}-\frac 1d\ln Z_{\hat g}$ under the probability measure weighted by $Z_{\hat g}^{-s}$ converges in probability to the deterministic field $\overline{h}$ defined by
\[\exp\left(d\overline{h}\right):=\frac{\exp(dh)}{\int_{\mathbb{R}^d}\exp(d(h+\frac12\ln\hat g w))\mathrm{d}\lambda}
\]

where $h=h_0\circ\psi^{-1}$, with $\psi$ the standard stereographic projection and $h_0$ the unique $H^{\frac{d}{2}}(\mathbb{S}^d,g_0)$ solution of
\[
\begin{cases}
\mathcal{P}_{0}h_0=2\gamma_d(1-\sum_k\chi_k)\left(\frac{\exp\left(d(h_0+\frac12\ln w\circ\psi)\right)}{\int_{\mathbb{S}^d}\exp\left(d(h_0+\frac12\ln w\circ\psi)\right)\mathrm{d}\lambda}-\frac{1}{\norm{\mathbb S^d}}\right)\\
\int_{\mathbb{S}^d}h_0\mathrm{d}\lambda=0.\\
\end{cases}
\]
The map $\psi$ being conformal we see that $\overline{h}$ is a solution of
\[
(-\Delta)^{\frac d2}\overline{h}=2\gamma_d(1-\sum_k\chi_k)\left(e^{d(\overline{h}+\frac12\ln \hat gw)}-\frac{\hat g^{\frac d2}}{\norm{\mathbb S^d}}\right).
\]

To summarize, the field $b\phi$ in the semi-classical limit should converge to the deterministic quantity \[\phi:=\overline{h}+\frac12\ln\hat g w+\frac1d\ln \frac{\sum_k\chi_k-1}{\Lambda}.
\]

It is then easily checked that $\phi$ is indeed a solution to the constant negative curvature problem
\[
\begin{cases}
(-\Delta)^{\frac{d}{2}}\phi+2\gamma_d\Lambda e^{d\phi}&=2\gamma_d\sum_k \chi_k \delta(x-x_k)\\
\phi\sim -2\ln\norm{x}&\text{as }x\rightarrow \infty.\\
\end{cases}
\]

As in the two-dimensional setting, we also expect that a second order expansion of the field $b\phi$ could be done (with a result similar to \cite[Theorem 2.4]{LRV19}) as well as a study of the semi-classical limit of the correlation function. We stress that there should not be so much additional difficulties compared to the case treated in~\cite{LRV19} but would involve some technicalities to be taken care of. It would also be interesting to derive a statement analogous to the Takhtajan-Zograf theorem~\cite[Theorem 1]{TZ02} in this higher-dimensional context too.

\subsection{The theory with a boundary}
In full analogy with the two-dimensional case, a challenging problem would be to extend the path integral approach to manifolds with boundary. In doing so, one could expect a similar construction involving GMC measures (a bulk measure and boundary measures) whose interactions can be parametrised with cosmological constants. In two dimensions the rigorous construction is done in \cite{HRV16}. The construction would also allow to define LCFT in odd-dimensional manifolds without boundaries, by viewing them as boundaries of even-dimensional manifolds. 

However providing an explicit formulation in terms of an action functional in the higher-dimensional case is still an open problem, closely related to the problem of constructing boundary operators associated to GJMS operators. Partial results are known in the four and six-dimensional cases (see \cite{CL18} for instance), but in the general this remains an open question, even from the perspective of the physics literature.

\appendix
\section{Auxiliary computations and proofs}
\subsection{Correspondence between different conventions for Liouville Conformal Field Theory}\label{table_phy/maths}
\hspace{0.05cm}\\
\begin{table}[h]
\begin{tabular}{|l|c|c|c|}
  \hline
   & Physics & Mathematics & Relationship \\
  \hline
  Coupling constant & $b\in(0,1)$ & $\gamma\in(0,\sqrt{2d})$ & $\gamma=b\sqrt{2d}$ \\
  \hline
   Background charge & $Q_b=b+\frac1b$ & $Q_{\gamma}=\frac{\gamma}{2}+\frac{d}{\gamma}$ & $Q_{\gamma}=\sqrt{\frac2d}Q_b$\\
  \hline
  Cov. of Liouville field & $\sim \frac{2}{d} \ln\frac{1}{\norm{x-y}}$ & $\sim \ln\frac{1}{\norm{x-y}}$ & $\varphi_{\gamma}=\sqrt{\frac{d}{2}}\varphi_b$\\
  \hline
  Vertex Operators & $e^{d\alpha\varphi_b(x)}$ & $e^{\beta\varphi_{\gamma}(x)}$ & \\
  \hline
  Conformal dimension & $d\alpha(Q_b-\alpha)$ & $\frac{\beta}{2}(Q_{\gamma}-\frac{\beta}{2})$ & \\
  \hline
  Seiberg bounds & $\sum_i\alpha_i>Q_b, \alpha_i<\frac{Q_b}{2}$ & $\sum_i\beta_i>2Q_{\gamma}, \beta_i<Q_{\gamma}$ &\\ 
  \hline
  
\end{tabular}

\caption{Different conventions for LCFT in even dimension $d$.}
\end{table}

\subsection{Auxiliary properties}
The first property is a special case of the celebrated Liouville's theorem which asserts that any harmonic function bounded and defined on the whole Euclidean space must be constant. In our simplified context the result is rather elementary:
\begin{lemma}\label{radial_harmonic}
Assume that $F$ is a radial, smooth function on $\mathbb{R}^d$ such that $\Delta^{\frac d2}F=0$.

If $F$ is bounded then $F$ must be constant.
\end{lemma}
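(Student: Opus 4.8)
The plan is to reduce the claim to an elementary ODE analysis. Since $F$ is radial I would write $F(x)=f(\norm{x})$ with $f$ smooth in $r=\norm{x}$ (extending to an even smooth function). Recall that on radial functions the Laplacian acts as $\Delta F = f''(r)+\tfrac{d-1}{r}f'(r)$, and in particular $\Delta(r^{2k}) = 2k(2k+d-2)\,r^{2k-2}$ for every integer $k\ge 0$. Since $d$ is even with $d\ge 2$, the coefficient $2k(2k+d-2)$ is nonzero for every $k\ge 1$; hence $\Delta$ maps the space of polynomials in $r^2$ of degree $\le n$ onto the space of polynomials in $r^2$ of degree $\le n-1$, lowering the degree by exactly one.

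The first genuine step is the base case: a radial function that is smooth on $\mathbb{R}^d$ and harmonic must be constant. Indeed, $\Delta F = 0$ reads $(r^{d-1}f'(r))'=0$, so $r^{d-1}f'(r)$ is constant, i.e. $f'(r)=C r^{1-d}$; since $d\ge 2$ this is integrable near $0$ only if $C=0$ (for $d=2$ a nonzero $C$ would force a $\ln r$ term, for $d\ge 3$ an $r^{2-d}$ term, both incompatible with smoothness at the origin), so $f'\equiv 0$ and $F$ is constant. Combining this with the observation of the previous paragraph, any radial smooth solution of $\Delta u = p(r^2)$ with $p$ a polynomial is itself of the form $q(r^2)$ with $q$ a polynomial of degree $\deg p+1$: one builds a polynomial particular solution using the nonvanishing of the coefficients $2k(2k+d-2)$ for $k\ge 1$, and the homogeneous part is constant by the base case.

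Now I would iterate. Set $G_j:=\Delta^j F$ for $0\le j\le d/2$, so that each $G_j$ is radial and smooth, $\Delta G_j = G_{j+1}$, and $G_{d/2}=0$. Then $G_{d/2-1}$ is harmonic, hence constant by the base case; descending from $j=d/2-1$ down to $j=0$, each $G_j$ is a polynomial in $r^2$ of degree $\le d/2-1-j$. In particular $F=G_0 = \sum_{k=0}^{d/2-1} c_k\,\norm{x}^{2k}$ is a genuine polynomial on $\mathbb{R}^d$. Finally the hypothesis that $F$ is bounded forces $c_k=0$ for all $k\ge 1$ (let $\norm{x}\to\infty$), so $F\equiv c_0$ is constant. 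The only place any real care is needed is the base-case ODE argument, and even there the point is simply that $r^{1-d}$ fails to be smooth at the origin unless its coefficient vanishes; everything else is bookkeeping.
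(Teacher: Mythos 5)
Your proof is correct and takes essentially the same route as the paper's: both reduce to the radial form $\Delta F=\frac{1}{r^{d-1}}\frac{d}{dr}\bigl(r^{d-1}f'(r)\bigr)$ and analyse the solution space of the iterated equation via its behaviour at the origin and at infinity. Your inductive descent, which invokes smoothness at the origin at each stage to conclude that $F$ is a genuine polynomial in $\norm{x}^2$ before letting $\norm{x}\to\infty$, is if anything a slightly more careful rendering of the paper's terse appeal to the full $(r,\tfrac1r,\ln r)$ solution basis.
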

\begin{proof}
Since $F$ is radial one can express its Laplacian as \[
\Delta F = \frac{1}{r^{d-1}}\frac{d}{dr}r^{d-1}F'(r).
\]
As a consequence if $F$ is such that $\Delta^{h}F=0$ we see that $F$ is polynomial in the variables $(r,\frac{1}{r},\ln r)$. The assumption that $F$ is bounded close to the origin implies that it is polynomial in $(r,\ln r)$; its behaviour close to $+\infty$ implies that it is constant.
\end{proof}
The second property is a very-well known property of Gaussian vectors and processes, usually referred to as Girsanov or Cameron-Martin theorem~\cite[Chapter VIII]{RY91}. Even if the theorem as stated below does not apply to our context (because of the smoothness assumption), it can be easily adapted to our purpose thanks to the regularization procedure for our log-correlated field:
\begin{theorem}[Girsanov theorem]\label{Girsanov}
Let $D$ be a subset of $\mathbb{R}^d$ and $(X(x))_{x\in D}$ be a smooth centered Gaussian field. If $Z$ is a Gaussian variable belonging to the $L^2$ closure of the subspace spanned by $(X(x))_{x\in D}$ then, for any bounded functional $F$ over the space of continuous functions,
\begin{equation}
\expect{e^{Z-\frac{\expect{Z^2}}{2}}F(X(x))_{x\in D}}=\expect{F\left(X(x)+\expect{ZX(x)}\right)_{x\in D})}.
\end{equation}
\end{theorem}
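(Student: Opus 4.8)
The plan is to reduce the identity to the one–dimensional Cameron--Martin formula by decomposing the field into its component along $Z$ and an independent remainder. Set $\sigma^2:=\mathbb{E}[Z^2]$. If $\sigma^2=0$ then $Z=0$ almost surely and both sides equal $\mathbb{E}[F((X(x))_{x\in D})]$, so assume $\sigma^2>0$. For each $x\in D$ put $c(x):=\sigma^{-2}\mathbb{E}[ZX(x)]$ and $Y(x):=X(x)-c(x)Z$. Because $Z$ lies in the $L^2$–closure of the span of $(X(x))_{x\in D}$, the family $(Z,(X(x))_{x\in D})$ is jointly Gaussian, and since $\mathbb{E}[Y(x)Z]=\mathbb{E}[X(x)Z]-c(x)\sigma^2=0$ for every $x$, the process $(Y(x))_{x\in D}$ is independent of $Z$. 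Thus we have the orthogonal decomposition $X(x)=c(x)Z+Y(x)$ with $(Y(x))_{x\in D}$ independent of $Z$.

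Next I would condition on $(Y(x))_{x\in D}$ and apply Fubini, which is legitimate since $F$ is bounded and $\mathbb{E}[e^{Z-\sigma^2/2}]=1$. For a fixed realisation of $Y$ the inner expectation is $\mathbb{E}\big[e^{Z-\sigma^2/2}\,g(Z)\big]$ with $g(z):=F\big((c(x)z+Y(x))_{x\in D}\big)$ and $Z\sim\mathcal N(0,\sigma^2)$. Completing the square in the Gaussian density gives the elementary identity $\mathbb{E}\big[e^{Z-\sigma^2/2}g(Z)\big]=\mathbb{E}[g(Z+\sigma^2)]$, since $e^{z-\sigma^2/2}e^{-z^2/(2\sigma^2)}=e^{-(z-\sigma^2)^2/(2\sigma^2)}$. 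Applying this with the above $g$ yields $\mathbb{E}[g(Z+\sigma^2)]=\mathbb{E}\big[F((c(x)Z+Y(x)+\sigma^2 c(x))_{x\in D})\big]$, and recognising $c(x)Z+Y(x)=X(x)$ together with $\sigma^2 c(x)=\mathbb{E}[ZX(x)]$ this equals $\mathbb{E}\big[F((X(x)+\mathbb{E}[ZX(x)])_{x\in D})\big]$. Taking the outer expectation over $Y$ reassembles the joint law of the whole field and produces exactly the right-hand side. (Alternatively one could first establish the identity for $F$ depending on finitely many evaluations $X(x_1),\dots,X(x_n)$ by the change–of–variables $x\mapsto x-\Sigma v$ in the non-degenerate multivariate Gaussian density, and then extend by a monotone-class argument; the conditioning route above is cleaner and avoids the density computation.)

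The Gaussian computation itself is routine, so the main obstacle is the one the remark preceding the statement already signals: the genuine log-correlated field $X_g$ is not smooth, so $X_g(x)$ cannot be evaluated pointwise and $F$ has to be read as acting on the distribution space $H^{-d/2}$. The fix is the one used throughout the paper, namely to run the argument above verbatim for the smooth circle-average regularisations $X_{g,\varepsilon}$ and then pass to the limit $\varepsilon\to 0$, using the $L^2$-convergence $X_{g,\varepsilon}\to X_g$ in $H^{-d/2}$, the boundedness (and continuity) of $F$, and uniform integrability of the densities $e^{Z_\varepsilon-\mathbb{E}[Z_\varepsilon^2]/2}$. A secondary, purely technical point is to check that the deterministic shift $x\mapsto\mathbb{E}[ZX(x)]$ is regular enough that $X_g+\mathbb{E}[ZX(\cdot)]$ remains in the space on which $F$ is defined; in every application in this paper $Z$ is an explicit finite linear combination of (regularised) field values or a smooth linear functional, so the shift is smooth, or lies in $H^{d/2}$, and this causes no trouble.
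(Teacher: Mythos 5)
Your proof is correct. The paper itself offers no argument for this statement (it is recorded in the appendix as a well-known property of Gaussian processes, with only a remark that the smoothness hypothesis must be relaxed by regularisation in the actual applications), so there is nothing to diverge from; your orthogonal decomposition $X(x)=c(x)Z+Y(x)$ with $Y$ independent of $Z$, followed by the one-dimensional complete-the-square identity $\expect{e^{Z-\sigma^2/2}g(Z)}=\expect{g(Z+\sigma^2)}$, is exactly the standard argument the author is invoking, and your closing paragraph correctly reproduces the regularisation-and-limit adaptation that the paper's preceding remark calls for.
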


\subsection{Classical Liouville Conformal Field Theory: some proofs}
As we will see, the construction of a solution for Theorem~\ref{classical_conical} follows closely the lines of Subsection~\ref{subsec:semi_classical}. We adapt this reasoning to the setting of the sphere by introducing throughout the subsequent proofs the function on $\mathbb{S}^d$ given by $w_0(x):=e^{\sum_{k}4\chi_k \ln{\frac{1}{\norm{x-x_k}}}}$ (analogous to the map $w$ of Subsection~\ref{subsec:semi_classical} which was defined on $\mathbb{R}^d$). Similarly we will construct solutions $\overline{h}_0$ and $\phi_0$ which are counterparts of $\overline{h}$ and $\phi$ constructed highlighted in Subsection~\ref{subsec:semi_classical}. 

\begin{proof}[Proof of the Moser-Trudinger-type inequality (Proposition~\ref{Mos_Tru})]\label{classical_proofs}
Consider $f$ a smooth function with vanishing mean over $\mathbb{S}^d$ and $q>1$ such that $q\chi_k<\frac 12$ for all $k$. Then for $p$ such that $1=\frac 1p+\frac 1q$ H\"older inequality shows that
\[
\int_{\mathbb{S}^d}e^{d(f+\frac12\ln w)}\mathrm{d}\lambda_{g_0}\leq \left(\int_{\mathbb{S}^d}e^{dpf}\mathrm{d}\lambda_{g_0}\right)^{\frac 1p}\left(\int_{\mathbb{S}^d}e^{\frac {dq}2\ln w}\mathrm{d}\lambda_{g_0}\right)^{\frac 1q}.
\]
Note that the second integral is finite since we have imposed the condition $q\chi_k<\frac{1}{2}$ so the singularities of $e^{\frac {dq}2\ln w}$ are integrable.
Therefore the standard Moser-Trudinger inequality \cite[Theorem 1]{B93} applied to $pf$ yields the inequality
\[
\ln \int_{\mathbb{S}^d}e^{d(f+\frac12\ln w)}\mathrm{d}\lambda\leq c+C\int_{\mathbb{S}^d} f\mathcal{P}_0 f\mathrm{d}\lambda.
\]
where the constants $c$ and $C$ are positive. If $f$ has non-zero mean the inequality extends in a straightforward way:
\[
\ln \int_{\mathbb{S}^d}e^{d(f+\frac12\ln w)}\mathrm{d}\lambda_{g_0}\leq c+C\int_{\mathbb{S}^d} f\mathcal{P}_0 f\mathrm{d}\lambda_{g_0} + d\int_{\mathbb{S}^d} f\mathrm{d}\lambda_{g_0}
\]
Since this inequality holds true for any smooth function over $\mathbb{S}^d$ and the right-hand-side quantity is bounded by a multiple of the Sobolev norm, it extends to the Sobolev space $H^{\frac d2}(\mathbb{S}^d,g_0)$.
\end{proof}

\begin{proof}[Existence and uniqueness of a constant negative $\mathcal{Q}$-curvature metric (Theorem~\ref{classical_conical})]
\hspace*{0.05cm}\\
We start by noting that the map $w_0$ is such that 
\begin{equation}
    \mathcal{P}_0\frac12\ln w_0 =2\gamma_d \sum_k\chi_k(\delta(x-x_k)-\frac{1}{\norm{\mathbb{S}}^d}).
\end{equation}
To see this, simply note that for $f:\mathbb{S}^d\rightarrow\mathbb{R}$ such that $f\circ\psi^{-1}$ is smooth and compactly supported in $\mathbb{R}^d$ (with $\psi:\mathbb{S}^d\rightarrow\mathbb{R}^d$ the stereographic map)  one has that 
\begin{align*}
    \int_{\mathbb{S}^d}\ln\frac{1}{\norm{x-y}}\mathcal{P}_0f(y)\mathrm{d}\lambda_{g_0}(y)&= \int_{\mathbb{R}^d}G_{\hat{g}}(\psi(x),y)\mathcal{P}_{\hat{g}}(f\circ\psi^{-1})(y)\mathrm{d}\lambda_{\hat{g}}(y)\\
    &=\gamma_d\left(f(x)-m_{\hat{g}}(f\circ\psi^{-1})\right)\\
    &=\gamma_d\int_{\mathbb{S}^d}(\delta(x-y)-\frac{1}{\norm{\mathbb{S}^d}})f(y)\mathrm{d}\lambda_{g_0}(y).
\end{align*}
Here we have used that for $x,y$ in $\mathbb{S}^d$, $\ln\frac{1}{\norm{x-y}}=\ln{\frac{1}{\norm{\psi(x)-\psi(y)}}}-\frac{1}{4}\Big(\ln\hat g(\psi(x))+\ln\hat g(\psi (y))\Big)$ and that since the stereographic map is conformal and $\hat g$ is the pushforward of $g_0$ by $\psi$, $\mathcal{P}_{0}(f)\circ\psi^{-1}=\mathcal{P}_{\hat g}(f\circ\psi^{-1})$.
This equality extends to any smooth function $f$ over $\mathbb{S}^d$ by using a truncation. This means that in the sense of distributions \[
\mathcal{P}_0\ln\frac{1}{\norm{x-y}}=\gamma_d(\delta(x-y)-\frac{1}{\norm{\mathbb{S}}^d}).
\]

\begin{itemize}
    \item To prove existence, we will use a variational formulation of the problem.
\end{itemize}
Let us introduce on $H^{\frac d2}(\mathbb{S}^d,g_0)$ the functional given by
\begin{equation}
J(h):= \frac{d}{4\gamma_d}\int_{\mathbb{S}^d}h\mathcal{P}_0h+2(d-1)!c_0 h\mathrm{d}\lambda_{g_0}-c_0\ln \int_{\mathbb{S}^d}e^{d(h+\frac12\ln w_0)}\mathrm{d}\lambda_{g_0}
\end{equation}
where recall that $\gamma_d=\frac{(d-1)!\norm{\mathbb{S}^d}}{2}$; we have also introduced $c_0:=\sum_k\chi_k-1>0$. Note that this functional is indeed well-defined thanks to the inequality~\eqref{Mos_Tru}. 

Then critical points of the functional $J$ correspond to variational solutions of
\begin{equation}\label{Lio_equ}
    \mathcal{P}_{0}h=-2\gamma_dc_0\left(\frac{e^{d(h+\frac12\ln w_0)}}{\int_{\mathbb{S}^d}e^{d(h+\frac12\ln w_0)}\mathrm{d}\lambda}-\frac{1}{\norm{\mathbb S^d}}\right).
\end{equation}
Moreover the functional $J$ is unchanged if $h$ is shifted by an additive constant: we can therefore consider a minimizing sequence $(h_k)_{k\in\mathbb{N}}$ of elements of $H^{\frac d2}(\mathbb{S}^d,g_0)$ with zero mean on the sphere. In that case we can apply the Moser-Trudinger-type inequality (Proposition \ref{Mos_Tru}) to get that for any $k$ we have 
\[
    \int_{\mathbb{S}^d}h_k\mathcal{P}_0h_k\mathrm{d}\lambda_{g_0}\leq c J(h_k)+C
\]
for some positive constants. 
Then a recursive application of the Poincar\'e inequality on the sphere shows that 
\[
    \int_{\mathbb{S}^d}\norm{h_k}^2\mathrm{d}\lambda_{g_0}\leq A+B\int_{\mathbb{S}^d}h_k\mathcal{P}_0h_k\mathrm{d}\lambda_{g_0}
\]
for positive constants. As a consequence the sequence of the $(h_k)_{k\in\mathbb{N}}$ is bounded in $H^{\frac d2}(\mathbb{S}^d,g_0)$ so it admits a subsequence that converges weakly in $H^{\frac d2}(\mathbb{S}^d,g_0)$ towards some $h_0$ which is thus a critical point of $J$, and therefore a variational solution of Equation~\eqref{Lio_equ}. However we can deduce from~\cite[Theorem 1.1]{UV00} that $h_0$ is actually smooth on $\mathbb{S}^d$, and therefore that $h_0$ does indeed solve Equation~\eqref{Lio_equ} in the strong sense.

To finish up, we define $\overline{h}_0$ by setting $e^{d\overline{h}_0}:=\frac{e^{d(h+\frac12\ln w_0)}}{\int_{\mathbb{S}^d}e^{d(h+\frac12\ln w_0)}\mathrm{d}\lambda_{g_0}}$ and $\phi_0:=\overline{h}_0+\frac{1}{d}\ln \frac{c_0}{\Lambda}$. Then we have that
\[
\mathcal{P}_0 \phi_0+2\gamma_d\Lambda e^{du_0}+(d-1)!= 2\gamma_d\sum_k\chi_k\delta(x-x_k).
\]
Put differently, the conformal metric $e^{2\phi_0}g_0$ has constant negative $\mathcal{Q}$-curvature $-2\gamma_d\Lambda$ and conical singularities given by $(\bm x,\bm{\chi})$.
\begin{itemize}
    \item For the uniqueness part, we start by considering $h_1$ and $h_2$ two solutions in $H^{\frac d2}(\mathbb{S}^d,g_0)$ of the variational problem \eqref{Lio_equ} (which are actually smooth by ~\cite[Theorem 1.1]{UV00}). Then one has that
\end{itemize}
\begin{align*}
    &\int_{\mathbb{S}^d}(h_1-h_2)(x)\mathcal{P}_0(h_1-h_2)(x)\mathrm{d}\lambda_{g_0}(x)\\
    &=2\gamma_d(1-\sum_k\chi_k)\int_{\mathbb{S}^d}(h_1-h_2)(x)\left(\frac{e^{d(h_1+\frac 12\ln w_0)(x)}}{\int_{\mathbb{S}^d}e^{d(h_1+\frac 12\ln w_0)}\mathrm{d}\lambda_{g_0}}-\frac{e^{d(h_2+\frac 12\ln w_0)(x)}}{\int_{\mathbb{S}^d}e^{d(h_2+\frac 12\ln w_0)}\mathrm{d}\lambda_{g_0}}\right)\mathrm{d}\lambda_{g_0}(x)\\
    &=2\gamma_d(1-\sum_k\chi_k) \int_{\mathbb{S}^d}(h_1-h_2)(x)\left(\int_0^1\frac d{dt}\frac{e^{d(h_2+t(h_1-h_2)+\frac 12\ln w_0)(x)}}{\int_{\mathbb{S}^d}e^{d(h_2+t(h_1-h_2)+\frac 12\ln w_0)}\mathrm{d}\lambda_{g_0}}dt\right)\mathrm{d}\lambda_{g_0}(x)\\
   &=2\gamma_d(1-\sum_k\chi_k)\int_0^1 \left(\int_{\mathbb{S}^d}(h_1-h_2)^2(x)d\mu_t(x)-\left(\int_{\mathbb{S}^d}(h_1-h_2)(x)d\mu_t(x)\right)^2\right)dt
\end{align*}
where $d\mu_t(x):=\frac{e^{d(h_2+t(h_1-h_2)+\frac 12\ln w_0)(x)}\mathrm{d}\lambda_{g_0}(x)}{\int_{\mathbb{S}^d}e^{d(h_2+t(h_1-h_2)+\frac 12\ln w_0)}\mathrm{d}\lambda_{g_0}}$ is a probability measure on $\mathbb{S}^d$ (note that the singularities coming from $w_0$ are integrable since we assume the Seiberg bounds to hold). The last equality is obtained by applying Fubini-Tonelli theorem for the first term (the integrand is positive) and Fubini-Lebesgue for the second one ($h_1-h_2$ is smooth thus the integral is absolutely convergent). As a consequence (Cauchy-Schwarz inequality) the integrand in the $t$ variable is non-negative; since we have assumed the second Seiberg bounds to hold (\textit{i.e.} $\sum_k\chi_k>1)$ this shows that  \[\int_{\mathbb{S}^d}(h_1-h_2)(x)\mathcal{P}_0(h_1-h_2)(x)\mathrm{d}\lambda_{g_0}(x)\leq0.
\] 
Since the operator $\mathcal{P}_0$ is non-negative this implies that $h_1-h_2$ is in the kernel of $\mathcal{P}_0$, that is $h_1$ and $h_2$ differ by a constant.

Now if we consider $u_0$ to be a solution of the constant $\mathcal{Q}$-curvature problem and set $h_0:=u_0-\frac12\ln w_0$ we see by using that $\int_{\mathbb{S}^d}e^{du_0}\mathrm{d}\lambda=\frac{c_0}{\Lambda}$ that $h_0$ is a solution of the variational problem for which we have just proved uniqueness up to a constant. Since this constant is fixed in the constant-curvature problem by the value of the total integral we see that $u_0$ is uniquely determined.
\end{proof}

\addcontentsline{toc}{section}{References}
\bibliographystyle{plain}
\bibliography{biblio}
\end{document}